\documentclass[12 pt]{article}
\usepackage{amssymb,amsmath,amsfonts,amsthm}
\usepackage{colortbl}
\newcommand\BBP{{\mathbb {P}}}

\newcommand\BBR{{\mathbb {R}}}

\newcommand\BBE{{\mathbb {E}}}

\newtheorem {Lemma}{Lemma}[section]

\newtheorem {Theorem}{Theorem}[section]
\newtheorem {Proposition}{Proposition}[section]
\newtheorem {Corollary}{Corollary}[section]

\theoremstyle{definition}
\newtheorem{Definition}{Definition}[section]

\newtheorem{Remark}{Remark}[section]

\newcommand\no{\noindent}

\newcommand\beq{\begin{equation}}
\newcommand\eeq{\end{equation}}

\def\no{\noindent}

\begin{document}
\title{Normal approximation for partial sums: general convex costs}
\author{J\'er\^ome Dedecker\footnote{J\'er\^ome Dedecker, Universit\'e Paris Cit\'e,  MAP5, UMR 8145 CNRS,
45 rue des  Saints-P\`eres,
F-75006 Paris, France.},
Florence Merlev\`ede\footnote{Florence Merlev\`ede, LAMA,  Univ Gustave Eiffel, Univ Paris Est Cr\'eteil, UMR 8050 CNRS,  \  F-77454 Marne-La-Vall\'ee, France.} 
and Emmanuel Rio \footnote{Emmanuel Rio, Universit\'e de Versailles, LMV, UMR 8100 CNRS,  \ 45 avenue des Etats-Unis, 
F-78035 Versailles, France.}
}
\date{}
\maketitle

\begin{abstract}
We provide non-asymptotic bounds and asymptotic limits for convex transport costs between the distribution of partial sums of independent and identically distributed  square integrable and centered random variables  and the normal distribution with mean zero and the same variance. The proof relies on controlling the transport cost by an appropriate ideal distance, combined with an adaptation of Lindeberg's method. The numerical constants and the asymptotic constants are explicit. \\

\noindent{\bf Mathematics subject classifications.} (2020) - 60 F 05, 60 E 15. \\
\noindent{\bf Keywords : }  
Transportation cost,   Normal approximation. \\
\end{abstract}

\section{Introduction}
\setcounter{equation}{0}

Let $ \psi$ be a non-negative,  even and convex function defined on ${\mathbb R}$. Define then the following transport cost between two probability laws $\mu$ and $\nu$ on ${\mathbb R}$: 
\[
\kappa_{\psi} ( \mu , \nu ) = \inf\{ \BBE ( \psi (X-Y) )\, : \, X \sim \mu,  Y \sim \nu \} 
\]
According to Theorem (8.1) in Major \cite{Ma78}, 
\beq
\label{Major78}
\kappa_{\psi} ( \mu , \nu ) = \int_0^1 \psi \big (  F^{-1} (u) - G^{-1} (u)  \big ) du \, .
\eeq
where $F$ and $G$ are the distribution functions of   $\mu$ and $\nu$ respectively, and $F^{-1}$ and $G^{-1}$ their generalized inverses. 
In this paper, we are interested in transport costs from the law of a sum of independent and  identically distributed (iid) centered real-valued random variables with finite variance to the Gaussian law with mean zero and the same variance. More precisely our aim is to find conditions ensuring that the transport cost
$\kappa_\psi$ between these two distributions remains bounded as $n$ tends to $\infty$. 
\par\smallskip
Let us now recall the known results in this area. In order to shorten the notations, we denote by $\kappa_p$ the transport cost in the case $\psi (x) = |x|^p$ (here $p\geq 1$) and $W_p = \kappa^{1/p}_p$ the associated Wasserstein distance. In the case $p=1$, \eqref{Major78} implies that 
\[
\kappa_1 (\mu , \nu) = \int_{\BBR} | F(x) - G (x) | dx \, .
\]
Consequently the results of Esseen \cite{E58} on the ${\mathbb L}^1$-norm of the difference between the distribution functions apply to $\kappa_1$ (see also  Ibragimov-Linnik \cite{IL71}, Section 5.3).
It follows from Esseen's results   that the costs $\kappa_1$ between the distribution of partial sums and the corresponding Gaussian distribution remain bounded as soon as the random variables have a finite absolute moment of order three.
More precisely, let  $(X_i)_{i \in {\mathbb Z}}$ be a sequence of iid real-valued centered random variables  in ${\mathbb L}^3$, with positive variance 
$\sigma^2$. Set $S_n = X_1 + X_2 + \cdots + X_n$ and let $G_{n\sigma^2}$ denote a Gaussian random variable with mean $0$ and variance 
$n\sigma^2$.  For a random variable $X$ denote by $P_X$ its distribution.  Esseen  \cite{E58} proved that
\beq
\label{Esseen58}
\sup_{n>0} \kappa_1 ( P_{S_n} , P_{G_{n\sigma^2} } ) \leq c_1 \sigma^{-2} \BBE ( |X_1|^3 )  
\eeq
for some positive universal constant $c_1$. According to Goldstein \cite{Gold2010}, $c_1 \leq 1$. 
Furthermore Esseen proved that the above costs converge to some  asymptotic constant (depending on the law of $X_1$) as $n$ tends
to $\infty$ (see Esseen \cite{E58}, Theorem 4.2).  
\par
Let us now discuss the case $p>1$. In this case, one cannot derive asymptotic results for the cost $\kappa_p$ in the central limit theorem from
results on the ${\mathbb L}^p$-norm of the difference between the distribution functions. B\'artfai \cite{Ba70} proved that, if the law of $X_1$ has a finite Laplace transform in a neighborhood of $0$, then, for any positive $\varepsilon$, 
\[
\lim_{n \rightarrow \infty} n^{-\varepsilon} \kappa_2 ( P_{S_n} , P_{G_{n\sigma^2} } ) = 0. 
\]
Later, Rio \cite{Rio09} extended \eqref{Esseen58} to the costs $\kappa_p$ for $p$ in $]1,2]$: he proved that, for any $p$ in $]1,2]$, 
there exists a positive constant $c_p$, depending only on $p$, such that 
\beq
\label{RioBobkov}
\sup_{n>0} \kappa_p ( P_{S_n} , P_{G_{n\sigma^2} } ) \leq c_p \sigma^{-2} \BBE ( |X_1|^{2+p}  )   .
\eeq
Next Bobkov \cite{Bo2018} proved that \eqref{RioBobkov} also holds true for $p>2$. On another hand, Rio \cite{Rio11} obtained asymptotic constants for the costs
$\kappa_p$ in the case $p$ in $]1,2]$ under the moment condition $\BBE ( |X_1|^{2+p}  ) < \infty$ (without conditions on the smoothness of 
the law of $X_1$). For $p>2$, Bobkov \cite{Bo2018} obtained asymptotic expansions for the costs $\kappa_p$ for random variables satisfying  
$\BBE ( |X_1|^{2+p}  ) < \infty$, under the Cr\'amer condition. 
\par\smallskip
In this paper, we are interested in giving upper bounds for  costs associated with general even and convex functions $\psi$ such that $x^{-2}\psi(x) \rightarrow 0$, as $x \rightarrow \infty$ (the precise definition of the class of costs is given in  Definition \ref{defclassPsi}).   In particular this class of functions includes the fonction $x \mapsto h(|x|)$ where $h(x) =   (x+1) \ln  (x+1) - x$. Our aim is to give sufficient conditions ensuring that the costs $\kappa_\psi ( P_{S_n} , P_{G_{n\sigma^2} } ) $ are uniformly bounded. As we shall see in Theorem \ref{coravecphi}, this will be the case as soon as $\BBE ( X_1^2 \psi ( |X_1| ) ) < \infty$.  Also, as a consequence of the introduced techniques, we will be  in position to get asymptotic constants 
for general convex costs (see Corollary \ref{asymptoticconstants}) and to provide tail inequalities for  $|S_n -  G_{n\sigma^2}| $ with  $G_{n\sigma^2}$ defined from $S_n$ via the quantile transformation,  as soon as the underlying random variables have a weak moment of order $2+p$ with $p \in ]1,2[$ (see Theorem \ref{UpperBoundHLtail}).  We shall also give upper bounds for the conditional value at risk associated with the partial sum $S_n$   (see Corollary \ref{corweakmoments} and Remark \ref{remarkcorcvar}).
We refer to Section  \ref{MR}, which is devoted to our main  results, for more details about the theoretical results.
\par\smallskip
We now give some insights on the proofs of our main results, which are given in Section \ref{sectionPR}, in the case $\sigma =1$. The main tools of the proofs are 
Proposition 2.1 and a suitable version of the Lindeberg method combined with a technique of acceleration of the convergence.
As in Rio \cite{Rio09}, if $\beta_3 := \BBE ( X_1^3) \not=0$, we replace step by step the initial random variables $X_k$ by iid random variables $Y_k$ with a finite Laplace transform such that $\BBE ( Y_1^3) = \beta_3$. More precisely, if $\Pi (\lambda)$ denotes a random variable  with Poisson distribution
of parameter $\lambda$, 
\beq \label{DefY1}
Y_1 = B_1 + N_1,
\eeq
where $B_1$ and $N_1$ are independent, $N_1$ is normally distributed with mean $0$ and variance $1/2$, and 
\beq \label{DefB1}
B_1 = 2 \beta_3 \Bigl( \Pi ( 1/(8\beta_3)^2 ) - 1/(8\beta_3^2) \Bigr) .
\eeq
\par\smallskip
Our methods allow us to get some numerical constants for this  first step.  In order to get general results with numerical constants, we then need to give estimates of the quadratic transportation cost between a Poisson distribution of parameter $\lambda$ and the normal distribution with mean $\lambda$ and variance $\lambda$ with a suitable numerical constant. Although it is known since a long time that the  transportation costs $\kappa_p$ 
for the normal approximation of the Poisson distribution remain bounded as $\lambda$ tends to $\infty$, up to our knowledge, numerical bounds do not exist, except in the case $p=1$.  In the case  $p=1$,  the general results of Peccati et al. (\cite{PSTU10}, Example 3.5) provide the upper bound
\beq
\label{Peccati10}
W_1 ( \mu_\lambda , \nu_\lambda) \leq 1 \text{ for any } \lambda >0,
\eeq
where $\mu_\lambda$ denotes the distribution of $\Pi (\lambda) - \lambda$ and $\nu_\lambda$  the normal distribution with mean $0$ and variance $\lambda$. In Proposition \ref{PropPoisson} of Section
\ref{sectionPR}, we give the more efficient upper bound 
\beq
\label{W2PoissonNormal}
W_2 ( \mu_\lambda , \nu_\lambda) \leq \sqrt{0.937} \leq 0.968 \text{ for any } \lambda>0,
\eeq
which implies that $W_1 ( \mu_\lambda , \nu_\lambda) \leq 0.968$, since $W_1 (\mu , \nu)  \leq W_2 (\mu , \nu)$. The proof of this result, given in Section \ref{sectionAppendix}, is based on a Tusn\'ady type Lemma proved in Massart \cite{Ma2002} together with a dyadic decomposition of a Poisson random variable. Notice that the asymptotic constants are much better: indeed, from the results of Esseen \cite{E58} in the case $p=1$ and Rio \cite{Rio11} for $p$ in $]1,2]$, 
\[
\label{WpAsPoisson}
\lim_{n\rightarrow \infty} W_p (\mu_n , \nu_n ) = \Vert (G^2-1)/6 + U  \Vert_p ,
\]
where $G$ is a standard normal and $U$ is a random variable with uniform distribution over $[-1/2,1/2]$, independent of $G$. 
In the case $p=2$, the above limit is equal to $\sqrt{5} \, /6 = 0.37267...$ (see Corollary 1.3 in Rio \cite{Rio11}). 
\par\smallskip
Concerning arbitrary laws, the numerical constants in \eqref{RioBobkov} have not been studied, except in the case $p=1$. In this paper, as a consequence of more general results, we get the  upper bound  $W_2 ( P_{S_n} , G_{n\sigma^2} ) \leq 6.825 \, \sigma^{-1} \sqrt{\BBE ( X_1^4)}$ ({\sl see} Remark \ref{ConstantsW2} for more details).
One can conjecture that the constant in  \eqref{RioBobkov} is less than $1$: most probably, to prove this conjecture, more sophisticated methods, such as the operator methods given in Bonis \cite{Bo24}, should be used.  Our techniques of proofs are more elementary. 
Therefore they can be adapted
to generalize the results of this paper to the class of weakly dependent sequences considered in Dedecker et al. \cite{DeMeRi23}.

\section{Main results} \label{MR}
\setcounter{equation}{0}

Throughout this paper, $(X_i)_{i \in {\mathbb Z}}$ is a sequence of iid real-valued random variables, 
$X$ denotes a real-valued random variable with the same law as $X_1$ and  $\Phi $ denotes the cumulative distribution function (c.d.f.) of a standard Normal distribution.

\subsection{Normal approximation for convex costs}

To handle the class of general convex costs we consider in this paper, we first introduce the following convex functions $\psi_x$, which generate all the convex functions in the class  $\Psi$ defined in Definition \ref{defclassPsi}  below (see Remark \ref{remark2}).  Some additional results are stated and proved in Appendix. 

For $x > 0$, define the convex and even function $\psi_x$ by
\begin{equation} \label{defpsi}
\psi_x (t)  = \left\{
\begin{aligned}
&t^2/4   & \text{if $ |t| \leq 2x$,}  \\  
&|tx| - x^2  &  \text{if $|t| > 2x$.} \\
  \end{aligned}
\right.
\end{equation}
The next proposition allows to compare the convex cost associated with $\psi_x$ with an appropriate ideal metric. 
\begin{Proposition}  \label{linkkappawasserstein} Let 
${\mathcal F}_x:= \{ f : {\mathbb R} \rightarrow {\mathbb R} \, : \, \Vert f' \Vert_{\infty}  \leq x \, , \, f' \text{ is $1$-Lipschitz} \}$.  Then
\[
\kappa_{\psi_x} ( \mu , \nu)  \leq \sup \bigl \{   \mu(f) -  \nu(f)  \, : \, f \in  {\mathcal F}_x \bigr \} .
\]
\end{Proposition}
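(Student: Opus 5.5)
The plan is to go through Kantorovich duality and then sandwich the two dual potentials between one smooth function in ${\mathcal F}_x$. The starting point is the variational identity
\[
\psi_x(t)=\sup_{|s|\le x}\bigl(st-s^2\bigr),
\]
valid for all real $t$: the unconstrained maximum $t^2/4$ is attained at $s=t/2$, and when $|t|>2x$ it is truncated to $x|t|-x^2$. It shows that $\psi_x$ is a nonnegative, even, convex function, continuous and of linear growth, with $\psi_x(0)=0$. The argument is as follows.

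\textbf{Step 1 (duality and regularity of the potentials).} By Kantorovich duality for this continuous cost (reducing first to measures with finite first moment, the only case of interest here), $\kappa_{\psi_x}(\mu,\nu)$ is the supremum of $\mu(g)-\nu(h)$ over pairs with $g(a)-h(b)\le\psi_x(a-b)$ for all $a,b$. I will restrict to $\psi_x$-conjugate pairs, which is allowed since passing to conjugates can only increase $\mu(g)-\nu(h)$:
\[
g(a)=\inf_b\bigl(h(b)+\psi_x(a-b)\bigr),\qquad h(b)=\sup_a\bigl(g(a)-\psi_x(a-b)\bigr).
\]
Since $\psi_x$ is $x$-Lipschitz, both $g$ and $h$ are $x$-Lipschitz. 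Since $\psi_x(t)-t^2/4$ is concave, $g(a)-a^2/4$ is concave, so $g''\le 1/2$ in the distributional sense. Symmetrically, $h(b)+b^2/4$ is convex, so $h''\ge -1/2$. Finally $\psi_x(0)=0$ gives $g\le h$.

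\textbf{Step 2 (interpolating function).} It suffices to find $f\in{\mathcal F}_x$ with $g\le f\le h$, because then $\mu(g)-\nu(h)\le\mu(f)-\nu(f)$. I take a Lasry--Lions-type inf-convolution with a stiffer Huber kernel:
\[
\varphi(t)=\begin{cases}t^2/2 & \text{if } |t|\le x,\\ x|t|-x^2/2 & \text{if } |t|>x,\end{cases}
\qquad f(z)=\inf_b\bigl(h(b)+\varphi(z-b)\bigr).
\]
The sandwich is immediate:
\begin{itemize}
\item $\varphi(0)=0$ gives $f\le h$;
\item $\varphi\ge\psi_x$ pointwise ($t^2/2\ge t^2/4$ for small $t$, and $x|t|-x^2/2\ge x|t|-x^2$ for large $t$) gives $f\ge g$.
\end{itemize}
Since $\varphi$ is $x$-Lipschitz, $f$ is $x$-Lipschitz. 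Since $\varphi(t)-t^2/2$ is concave, $f(z)-z^2/2$ is concave, so $f''\le 1$.

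\textbf{Step 3 (lower curvature bound; the main obstacle).} The remaining point is $f''\ge -1$, which together with Step 2 makes $f$ differentiable with $f'$ $1$-Lipschitz. For a purely quadratic kernel this is the classical computation: the inf-convolution of $h$ (with $h''\ge-\lambda$, $\lambda=1/2$) with $c\,t^2/2$ (with $c=1>\lambda$) satisfies $f''\ge -\lambda c/(c-\lambda)=-1$. With the Huber kernel I would argue locally.

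First I would show that at each $z$ a minimizer $b^*(z)$ exists. This is guaranteed when $h$ is $x$-Lipschitz, possibly after an approximation by replacing $x$ with $x+\varepsilon$ in the kernel. At a minimizer I would then use the first-order condition $h'(b^*)=\varphi'(z-b^*)$ and check two regimes:
\begin{itemize}
\item when $|z-b^*|<x$, the kernel is quadratic near the minimizer and the classical bound applies;
\item when $|z-b^*|\ge x$, the slope $f'=\varphi'(z-b^*)$ is saturated at $\pm x$.
\end{itemize}
In the saturated regime I would show that $f$ is affine with slope $\pm x$ near $z$ in the corresponding direction. If this local analysis gets delicate, the fallback is to write $f'$ directly as the monotone composition of $\varphi'$ with $(\mathrm{id}+(h')^{-1}$-type) maps and compare slopes. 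Either way, the conclusion is $f\in{\mathcal F}_x$ with $g\le f\le h$, which by Steps 1 and 2 gives the stated inequality.
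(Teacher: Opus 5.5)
Your route differs from the paper's. Steps 1 and 2 are sound. You skipped the range $x<|t|\le 2x$ when checking $\varphi\ge\psi_x$, but it follows from $\varphi(t)=\sup_{|s|\le x}(st-s^2/2)\ge\sup_{|s|\le x}(st-s^2)=\psi_x(t)$.

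The gap is Step 3. It carries all the weight and is only sketched, and the two-regime local argument as described does not close.
\begin{itemize}
\item In the saturated regime, suppose $b^*$ is a minimizer at $z$ with $z-b^*>x$. Combining with the $x$-Lipschitz bound gives $f(z')=f(z)-x(z-z')$ only for $z'\in[b^*+x,z]$. So you learn $f'(z-)=x$, but nothing excludes $f'(z+)<x$. That is exactly the concave corner that $f''\ge-1$ must rule out.
\item In the other regime, the ``classical bound'' is a second-difference estimate using minimizers at $z\pm\delta$, or a global duality identity. With the Huber kernel those minimizers need not be unique, nor lie in the quadratic zone. In the linear zone the kernel's second difference degenerates to $0$, so the classical computation fails.
\end{itemize}

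One observation removes the obstacle. The Huber kernel is itself an inf-convolution, $\varphi(t)=\inf_s\bigl(\tfrac12 s^2+x|t-s|\bigr)$. Since $h$ is $x$-Lipschitz, $\inf_b\bigl(h(b)+x|w-b|\bigr)=h(w)$. Exchanging the two infima yields
\[
f(z)=\inf_w\Bigl(h(w)+\tfrac12 (z-w)^2\Bigr)=\tfrac12 z^2-\Theta^*(z),\qquad \Theta(b)=h(b)+\tfrac12 b^2 .
\]
You could therefore drop the Huber kernel altogether. Since $\tfrac12 t^2\ge\psi_x(t)$ for all $t$, you get $g\le f\le h$ directly, and $f$ is $x$-Lipschitz as an infimum of translates of $h$.

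Now finish as follows:
\begin{enumerate}
\item By Step 1, $\Theta-\tfrac14 b^2=h+\tfrac14 b^2$ is convex, so $\Theta$ is $\tfrac12$-strongly convex and superlinear.
\item Hence $\Theta^*$ is finite and $C^1$, with $(\Theta^*)'$ nondecreasing and $2$-Lipschitz.
\item So $f'(z)=z-(\Theta^*)'(z)$ has all difference quotients in $[-1,1]$, i.e.\ $f'$ is $1$-Lipschitz and $f\in\mathcal{F}_x$.
\end{enumerate}
With this inserted, your proof is complete.

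\textbf{How the paper does it.} The authors prove the more general Proposition~\ref{Propdualityconvex}. It covers any $\mathcal{C}^1$ even convex $\varphi$ with concave $\varphi'$, with test functions satisfying $|f'(s)-f'(t)|\le 2\varphi'(|s-t|)$; for $\psi_x$ this modulus is $\min(|s-t|,2x)$. Their construction is primal and one-dimensional:
\begin{itemize}
\item after Gaussian smoothing they set $A=\{G=F\}$;
\item they define $h'(t)=\operatorname{sign}(G-F)(t)\,\varphi'(2d(t,A))$;
\item they check $h(F^{-1}(u))-h(G^{-1}(u))\ge\varphi(F^{-1}(u)-G^{-1}(u))$ pointwise along the quantile coupling, interval by interval between points of $A$, using equal means to get $A\neq\emptyset$.
\end{itemize}

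\textbf{Trade-offs.} The paper's argument is elementary and handles all costs with concave $\varphi'$ at once. Yours uses Kantorovich duality and the ``quadratic minus concave'' structure specific to $\psi_x$, so it is tied to this cost. In return it needs no smoothing, no equal-mean hypothesis (only finite first moments) and no quantile structure. It also extends to the cost $\psi_x(|a-b|)$ on $\mathbb{R}^d$. The restriction to $\psi_x$ costs nothing here, because the paper only applies the result to $\psi_x$ and recovers general $\varphi\in\Psi$ by mixing (Remark~\ref{remark2}).
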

Proposition \ref{linkkappawasserstein} and a Lindeberg-type method lead to the theorem below. 

\begin{Theorem} \label{Thmkappa} Let $(X_i)_{i \in {\mathbb Z}}$ be a sequence of iid real-valued random variables that are centered and in ${\mathbb L}^3$, with positive variance $\sigma^2$.  Let $\mu_3 = \BBE (X^3)$ and $\lambda_3 = \BBE (|X|^3)$. 
Then there exist  positive universal constants $\gamma_0$,
$\gamma_1$, $\gamma_2$ and $\gamma_3$  such that, setting  
$C_0 (X)= \gamma_1\sigma^2 +\sigma^{-4}(\gamma_0  \lambda_3^2 + \gamma_3 \mu_3^2)$,
for any $n \geq 1$ and any $x >0$, 
\[
\sqrt{\kappa_{\psi_{x} }( P_{S_n} , P_{G_{n\sigma^2}} )  }  \\ \leq  \sqrt{ C_0 (X)  +  \gamma_2 \sigma^{-2} \BBE  \big ( |X|^3 \min ( |X| ,  8 x ) \big ) }  
+ 0.968  \sigma^{-2} |\mu_3| \, ,
\]
where $G_{n\sigma^2} \sim {\mathcal N} ( 0, n \sigma^2)$.  The constants $\gamma_0$,  $\gamma_1$, $\gamma_2$ and $\gamma_3$ can be chosen as follows:
$\gamma_0 = 3/5$, $\gamma_1 =3.0376$, $\gamma_2 = 0.64584$ and $\gamma_3 = 1.6917$.
\end{Theorem}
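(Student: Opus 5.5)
By homogeneity we first reduce to $\sigma=1$. Replacing $X$ by $X/\sigma$ turns $\psi_x$ into $\sigma^2\psi_{x/\sigma}$, and each term of the bound rescales consistently, so no generality is lost. The plan is to interpolate between $S_n$ and $G_n$ through an intermediate sum $T_n=Y_1+\cdots+Y_n$ built from iid copies of $Y_1=B_1+N_1$ as in \eqref{DefY1}--\eqref{DefB1}. These variables are centered, have variance $1$, and satisfy $\BBE(Y_1^3)=\mu_3$. We then use the fact that $\sqrt{\kappa_{\psi_x}}$ behaves like a distance. Indeed, $\psi_x(t)\le t^2/4$, and by \eqref{Major78} the quantity $\sqrt{\kappa_{\psi_x}}$ can be compared to an $\mathbb L^2$-type norm of a quantile difference, so a triangle inequality of the form
\[
\sqrt{\kappa_{\psi_x}(P_{S_n},P_{G_n})}\le \sqrt{\kappa_{\psi_x}(P_{S_n},P_{T_n})}+\tfrac12 W_2(P_{T_n},P_{G_n})
\]
should hold. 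This follows from the convexity of $\psi_x$, since $\psi_x$ is "quadratic then linear", and the Minkowski-type inequality for the gauge $\sqrt{\psi_x}$; one has to check that $\sqrt{\psi_x}$ is subadditive.

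\emph{The Poisson term.} $T_n=\sum B_i+\sum N_i$. Here $\sum N_i\sim\mathcal N(0,n/2)$, and $\sum B_i$ equals $2\mu_3(\Pi(n/(8\mu_3)^2\cdot\ldots)-\text{mean})$, a rescaled centered Poisson variable of variance $n/2$. Coupling the Gaussian part identically and using \eqref{W2PoissonNormal} for the Poisson part gives $W_2(P_{T_n},P_{G_n})\le 2|\mu_3|\cdot 0.968$. Multiplying by the factor $1/2$ yields exactly the term $0.968\,|\mu_3|$.

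\emph{The Lindeberg step.} Proposition \ref{linkkappawasserstein} gives $\kappa_{\psi_x}(P_{S_n},P_{T_n})\le\sup_{f\in\mathcal F_x}\BBE(f(S_n)-f(T_n))$. We write the telescoping sum
\[
\BBE f(S_n)-\BBE f(T_n)=\sum_{k=1}^n\BBE\bigl(f(U_k+X_k)-f(U_k+Y_k)\bigr),
\]
where $U_k=\sum_{i<k}X_i+\sum_{i>k}Y_i$. Since $f\in\mathcal F_x$ is not smooth enough, we must smooth by convolution: the remainder $U_k$ already contains the Gaussian part $\sum_{i>k}N_i$, of variance $(n-k)/2$. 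For the last few indices we add a small independent Gaussian artificially, or use the "acceleration of convergence" trick, bounding the first indices by induction on $n$ with the already-proved estimate.

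Conditioning on $U_k$, we Taylor expand $g_k(u)=\BBE f(u+\cdot)$ smoothed by the Gaussian part. The first three moments of $X_k$ and $Y_k$ match, so the terms of order $\le3$ cancel. For the remainder we use two bounds:
\begin{itemize}
\item $|f'''|$-type control at cost $\|g_k^{(4)}\|_\infty\lesssim (n-k)^{-3/2}$, applied for $|X|\le 8x$, giving $X^4$;
\item $|f'|\le x$ combined with $\|g_k^{(3)}\|_\infty\lesssim(n-k)^{-1}$, applied for $|X|>8x$, giving $x|X|^3$.
\end{itemize}
Together these give $\BBE(|X|^3\min(|X|,8x))$ times a summable series in $k$. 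The $Y$-side contributes moments of $Y$ bounded by $1+\mu_3^2$, and the contributions coming from $\lambda_3^2$ arise through the cutoff between the two regimes and the handling of small $n-k$. Gathering everything gives $C_0(X)+\gamma_2\BBE(|X|^3\min(|X|,8x))$.

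\emph{Main obstacle.} The hardest part is making the Lindeberg sum converge uniformly in $n$. Naive derivative bounds produce $\sum_k (n-k)^{-1}$, a logarithm, so the induction/acceleration argument of Rio \cite{Rio09} is needed: one bounds $\kappa$ at stage $n$ by using the bound at stage $k<n$ for the Gaussian smoothing error, with explicit constants. I would first attempt a recursive inequality of the form $D_n\le a+b\sum_k (n-k)^{-3/2}\sqrt{D_k}$ and solve it with explicit numerics.
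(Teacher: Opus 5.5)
\textbf{There is a genuine gap, in the Lindeberg step.} Several parts of your plan match the paper:
\begin{itemize}
\item the reduction to $\sigma=1$ and the intermediate sum $T_n$;
\item the triangle inequality for $\sqrt{\kappa_{\psi_x}}$ via subadditivity of $\sqrt{\psi_x}$;
\item the bound $\frac12 W_2(P_{T_n},P_{G_n})\le 0.968|\mu_3|$ via Proposition~\ref{PropPoisson};
\item the use of Proposition~\ref{linkkappawasserstein}.
\end{itemize}

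The problem is the Lindeberg step, which is the heart of the theorem. With $U_k=\sum_{i<k}X_i+\sum_{i>k}Y_i$, the only Gaussian smoothing at step $k$ has variance about $(n-k)/2$. For $f\in\mathcal F_x$ you only know $\Vert f'\Vert_\infty\le x$ and $|f''|\le 1$. Lemma~\ref{smoothlemma} then gives $\Vert g_k^{(3)}\Vert_\infty\lesssim x(n-k)^{-1}$ and $\Vert g_k^{(4)}\Vert_\infty\lesssim\min\bigl(x(n-k)^{-3/2},(n-k)^{-1}\bigr)$.

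The decay $(n-k)^{-3/2}$ comes only with the factor $x$. Using it would give $x\,\BBE(X^4\mathbf{1}_{|X|\le 8x})$, which is not bounded by $\BBE(|X|^3\min(|X|,8x))$. The correct estimate $\min\bigl(2\Vert g_k^{(3)}\Vert_\infty,|X|\,\Vert g_k^{(4)}\Vert_\infty\bigr)\lesssim (n-k)^{-1}\min(2x,|X|)$ has the right shape, but it sums to order $\log n$. You concede this in your last paragraph, which contradicts your earlier claim of a summable series.

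Your proposed cure, a recursion $D_n\le a+b\sum_k(n-k)^{-3/2}\sqrt{D_k}$, is neither derived nor solved. Nothing in the proposal could yield the explicit $\gamma_0,\dots,\gamma_3$, which are part of the statement. Your account of where $\lambda_3^2$ comes from is also not right. Finally, \eqref{DefB1} is undefined when $\mu_3=0$, so that case needs separate treatment; the paper takes $Y_k$ standard Gaussian there.

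\textbf{What the paper does instead.} It needs no induction; it uses a two-level Lindeberg argument.
\begin{enumerate}
\item Add an independent $Z\sim\mathcal N(0,v^2)$, with $v^2>1/2$, to both $S_n$ and $T_n$. Since $f'$ is $1$-Lipschitz and $\BBE Z=0$, this costs at most $v^2$.
\item Expand only to second order and keep the third-order integral remainder. In that remainder, replace $S_{k-1}$ by $T_{k-1}$. The third-moment terms $\BBE f^{(3)}_{n-k}(T_{k-1})X_k^3=\BBE f^{(3)}_{n-k}(T_{k-1})Y_k^3$ then cancel.
\item After this replacement, every index $k$ is smoothed by all the $N_i$, $i\ne k$, together with $Z$. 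That is a Gaussian of variance $u_n=(n-1)/2+v^2\ge n/2$. The sum over $k$ is therefore $n\cdot O(u_n^{-1})=O(1)$, which gives the term $\frac{c_2}{12}\bigl(3+2\beta_3^2+\BBE(|X|^3(|X|\wedge 8x))\bigr)$.
\item The error from replacing $S_{k-1}$ by $T_{k-1}$ is controlled by a second Lindeberg swap over $\ell<k$, again using the matching of three moments. It produces sixth and seventh derivatives of size $c_5x\,u_{n,\ell}^{-5/2}$ and $c_5u_{n,\ell}^{-5/2}$, where $u_{n,\ell}=(n-\ell-1)/2+v^2$.
\item The double sum $\sum_k\sum_{\ell<k}u_{n,\ell}^{-5/2}$ is bounded by a constant times $(2v^2-1)^{-1/2}$, uniformly in $n$.
\item Balancing this against the cost $v^2$ by choosing $v^2=0.5+0.15(\BBE|X|^3+\BBE|Y|^3)^2$ produces the $\lambda_3^2$ and $\mu_3^2$ terms and the stated constants.
\end{enumerate}
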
 
\begin{Remark} \label{ConstantsW2}
When the variables are in ${\mathbb L}^4$, taking the limit as $x$ tend to $\infty$ gives
\[
W_2 ( P_{S_n} , P_{G_{n\sigma^2}} )  \leq  2 \sqrt{C_0 (X) +  \gamma_2 \sigma^{-2} \mu_4 }  
+ 1.936     \sigma^{-2} |\mu_3| 
\]
for any $n\geq 1$, where $\mu_4 =   \BBE  (X^4 )$. In particular, since $\sigma^4 \leq \mu_4$ and 
$\mu_3^2 \leq \lambda_3^2 \leq \sigma^2 \mu_4$,
\[
W_2 ( P_{S_n} , P_{G_{n\sigma^2}} )  \leq 2 \sqrt{  4.2835 \sigma^{-2} \mu_4+    1.6917\sigma^{-4} \mu_3^2}  
+ 1.936  \sigma^{-2} | \mu_3|   \leq 6.825 \sigma^{-1}\sqrt{ \mu_4}   \, .
\]
In the case $\BBE (X^3)=0$,  the above upper bound yields 
\[
 W_2 ( P_{S_n} , P_{G_{n\sigma^2}} )  \leq    4.140 \sigma^{-1} \sqrt{ \mu_4 }   \, .
\]
Conversely, in the  case of Rademacher random variables with parameter $1/2$ (for which $\mu_3= 0$ and $\sigma^{-1} \sqrt{\mu_4} = 1$), \eqref{ERad} gives 
\[
\sup_{n \geq 1} W_2 ( P_{S_n} , P_{G_{n\sigma^2}} ) \geq   \sqrt{2 (1 - \sqrt{ 2/\pi } \,) }  \geq 0.63579 \, . 
\]
\end{Remark}

We now give applications of Theorem \ref{Thmkappa}  to a class of  convex costs whose definition is given below. 

\begin{Definition} \label{defclassPsi}

Let ${ \Psi} $ be the class of functions $\varphi$ defined on ${\mathbb R}$, even, convex, ${\cal C}^1$ and such that 
$ \varphi(0) =  \varphi' (0) =0$, $\varphi'$ is concave, $\varphi'$ is derivable at $0$, $\varphi''(0)=1 $ and $\lim_{x \rightarrow \infty} x^{-1}\varphi'(x) =0$. 
\end{Definition}
Note that the class $\Psi$ includes the function $x \mapsto  (|x|+1) \ln  (|x|+1) - |x|$. 
Next, for $p \in [1,2[$, let $g_p$ be the even function defined  on $ {\mathbb R}^+$ by 
\begin{equation} \label{defgp}
g_p (x)   = \left\{
\begin{aligned}
&x^2/2 & \text{if $ x \in [0,1]$}  \\  
& x^p/p + 1/2  - 1/p & \text{if $x >1$.} \\
  \end{aligned}
\right.
\end{equation}
It is easy to prove that $g_p$  is in the class ${\Psi}$, and that for any $x $ in ${\mathbb R}^+$,
\beq \label{comparisongpxp}
x^p \leq pg_p(x)  +1 - p/2 \, .
\eeq

\begin{Remark} \label{remark2}
Note that if $\varphi$ is in ${ \Psi} $ then there exists a probability measure $\nu$ on ${\mathbb R}_+$ such that 
\[ \varphi(z) = \frac{1}{2} \int_{ {\mathbb R}_+} \psi_t( 2 z ) d \nu (t) = 2 \int_{ {\mathbb R}_+} \psi_{t/2}(  z ) d \nu (t) 
\] where $\psi_t$ is defined in \eqref{defpsi}. This fact will be proved in Section \ref{demrem22}.
\end{Remark}
Let  $
{\mathcal  P}_{\varphi} ({\mathbb R} ) $ be the set of probability laws $\mu$ on the real line such that $\mu ( \varphi) < \infty$. 
For two probability laws  $\mu$ and $\nu$ in ${\mathcal  P}_{\varphi} ({\mathbb R} ) $,  define now 
\[
W_{\varphi} ( \mu, \nu ) = \bigl( \kappa_{\varphi} ( \mu, \nu ) \bigr )^{1/2} \, .
\]
$W_{\varphi}$ is a distance between probability laws on  ${\mathcal  P}_{\varphi} ({\mathbb R} )$ ({\sl see\/} Lemma \ref{Wphidistance}).

\begin{Theorem} \label{coravecphi} Let $(X_i)_{i \in {\mathbb Z}}$ be a sequence of iid real-valued random variables that are centered and in ${\mathbb L}^3$, with positive variance $\sigma^2$. Let $\varphi$ be a function 
in ${ \Psi} $ and $G_{n \sigma^2}$ be a  centered  normal  random variable with variance $n \sigma^2 $. Then, with the same notations
and the same constants as in Theorem \ref{Thmkappa}, for any $n \geq 1$, 
 \[
W_\varphi ( P_{S_n} , P_{G_{n\sigma^2}} )  \leq  \sqrt{ 2 C_0 (X) +  
 8\gamma_2 \sigma^{-2}  \BBE  \big ( |X|^3 \varphi' (  |X| /4  ) \big )  }  + 
1.369 \sigma^{-2}  |\mu_3|.
 \, ,
 \]
\end{Theorem}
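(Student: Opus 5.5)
The plan is to deduce Theorem \ref{coravecphi} from Theorem \ref{Thmkappa} through the mixture representation of Remark \ref{remark2}. Since $\varphi\in\Psi$, there is a probability measure $\nu$ on ${\mathbb R}_+$ with $\varphi(z)=2\int\psi_{t/2}(z)\,d\nu(t)$. The key point is to apply this with a \emph{single} coupling rather than optimising each $\psi_{t/2}$ separately. By Major's formula \eqref{Major78}, the quantile coupling $(F_n^{-1}(U),\Phi_n^{-1}(U))$ is optimal simultaneously for every even convex cost, where $F_n$ is the c.d.f. of $S_n$ and $\Phi_n$ that of $G_{n\sigma^2}$. Integrating the representation under this coupling and using Fubini gives
\[
\kappa_\varphi(P_{S_n},P_{G_{n\sigma^2}})=2\int_{{\mathbb R}_+}\kappa_{\psi_{t/2}}(P_{S_n},P_{G_{n\sigma^2}})\,d\nu(t).
\]

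Next I bound the integrand. Theorem \ref{Thmkappa} with $x=t/2$ gives $\sqrt{\kappa_{\psi_{t/2}}}\le \sqrt{A(t)}+b$, where
\[
A(t)=C_0(X)+\gamma_2\sigma^{-2}\BBE\big(|X|^3\min(|X|,4t)\big),\qquad b=0.968\,\sigma^{-2}|\mu_3|.
\]
To integrate this against $\nu$ while keeping the square-root structure, I use Minkowski's inequality in ${\mathbb L}^2(\nu)$: the function $t\mapsto \sqrt{A(t)}+b$ has ${\mathbb L}^2(\nu)$ norm at most $\sqrt{\int A\,d\nu}+b$, because $\nu$ is a probability measure. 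Multiplying by $\sqrt2$ yields
\[
W_\varphi\le \sqrt{2\int A\,d\nu}+\sqrt2\,b.
\]
The constant matches, since $\sqrt2\times0.968\le1.369$. The constant part of $2\int A\,d\nu$ is exactly $2C_0(X)$.

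The main step is identifying $2\gamma_2\sigma^{-2}\int\BBE(|X|^3\min(|X|,4t))\,d\nu(t)$, which I expect to equal $8\gamma_2\sigma^{-2}\BBE(|X|^3\varphi'(|X|/4))$. By Fubini, this reduces to the pointwise identity
\[
\int\min(y,4t)\,d\nu(t)=4\varphi'(y/4)\qquad\text{for } y\ge0.
\]
To verify it, differentiate the representation. For $z\ge0$ one has $\psi_{t/2}'(z)=\min(z/2,t/2)$: indeed $\psi_{t/2}'(z)$ equals $z/2$ when $z\le t$ and $t/2$ when $z>t$. Hence $\varphi'(z)=\int\min(z,t)\,d\nu(t)$. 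Differentiating under the integral is allowed because the integrands are Lipschitz in $z$ with constant $1/2$ (dominated convergence). Setting $z=y/4$ gives $4\varphi'(y/4)=\int\min(y,4t)\,d\nu(t)$, as required.

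The only delicate points are the measurability and Fubini justifications and, if necessary, checking that $\nu$ has total mass one (this follows from $\varphi''(0)=1$). All of these are supplied by Remark \ref{remark2}. If $\BBE(|X|^3\varphi'(|X|/4))=\infty$ the bound is trivial, so nothing further is needed in that case.
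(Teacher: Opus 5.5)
Your proof is correct and yields the stated constants, but it is organized differently from the paper's.

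The paper applies the triangle inequality for $W_\varphi$ itself (Lemma \ref{Wphidistance}), with $P_{\sigma T_n}$ as the intermediate law. The Poisson--normal term is bounded once for all $\varphi$ via $\varphi(x)\le x^2/2$ and Proposition \ref{PropPoisson}, which gives $\sqrt{1.874}$. The mixture representation of Remark \ref{remark2} is then used only for $\kappa_\varphi(P_{S_n},P_{\sigma T_n})$. There each $\kappa_{\psi_{t/2}}$ is controlled by Proposition \ref{linkkappawasserstein}, Proposition \ref{boundfinal0} and \eqref{UpperboundB(n,x)}, i.e.\ by the internal estimates from the proof of Theorem \ref{Thmkappa}.

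You instead use Theorem \ref{Thmkappa} as a black box, in which the triangle inequality has already been applied separately for each $\psi_x$. You then aggregate over $\nu$ with Minkowski's inequality in ${\mathbb L}^2(\nu)$. This combination is in effect a proof of the triangle inequality for $\sqrt{\kappa_\varphi}$ from the one for the $\sqrt{\kappa_{\psi_x}}$. So you never need Lemma \ref{Wphidistance} for general $\varphi$, nor the bound $\varphi(x)\le x^2/2$, and the result becomes a genuine corollary of Theorem \ref{Thmkappa} and Remark \ref{remark2} without reopening the Lindeberg argument. The paper's route, in exchange, keeps the unrounded Poisson constant $\sqrt{2\times 0.937}$ instead of your $\sqrt2\times 0.968$; the difference disappears after rounding to $1.369$.

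A minor point: that $\nu$ has total mass one follows from the condition $x^{-1}\varphi'(x)\to 0$, with $\varphi''(0)=1$ only fixing the normalization. It does not follow from $\varphi''(0)=1$ alone. Since you take this fact from Remark \ref{remark2}, it does not affect your argument.
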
 
\begin{Remark} \label{remTh22} As we shall prove in Section \ref{section3.5}, 
\[
 \BBE  \big ( |X|^3 \varphi' (  |X| /4  ) \big )  
  \leq  8  \BBE  \big ( |X|^2 \varphi (  |X|  /4 ) \big ) \, .
\]
It follows that  $W_\varphi ( P_{S_n} , P_{G_{n\sigma^2}} ) $ is uniformly bounded in $n$ provided that 
\beq  \label{condmomentvarphi}
\BBE  \big ( |X|^2 \varphi (  |X|   ) )< \infty .
\eeq
\end{Remark}
Concerning the Wasserstein distances of order $p$, we obtain the  constants below in \eqref{RioBobkov}.
\begin{Proposition} \label{coravecxp}Let $p \in ]1,2[$ and $(X_i)_{i \in {\mathbb Z}}$ be a sequence of iid real-valued random variables that are centered and in ${\mathbb L}^{2+p}$. Then, for any $n \geq 1$, 
 \[
W_p ( P_{S_n} , P_{G_{n\sigma^2}} )    \leq  2 \bigl(   (C_0 (X))^{p/2}   + p 2^{5-3p} \gamma_2 \sigma^{-2} \BBE  ( |X|^{p+2})  \bigr )^{1/p}
+   1.936 \sigma^{-2}  |\mu_3 | .
\]
\end{Proposition}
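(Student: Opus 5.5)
The plan is to deduce the statement from the mechanism behind Theorem \ref{coravecphi}, applied to a rescaled version of the function $g_p$ of \eqref{defgp}, and then optimize over the scale. The $\mu_3$-term will be treated separately by a triangle inequality, exactly as in the proof of Theorem \ref{Thmkappa}. I will first do the case without the $\mu_3$ contribution and reinsert it at the end.

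\emph{Step 1 (comparison of $|t|^p$ with an element of $\Psi$).} For $\lambda>0$ put $\varphi_\lambda(t)=\lambda^2 g_p(t/\lambda)$. One checks directly that $\varphi_\lambda$ is in $\Psi$: it is even, convex and ${\mathcal C}^1$, with $\varphi_\lambda'(u)=u$ on $[0,\lambda]$ and $\varphi_\lambda'(u)=\lambda^{2-p}u^{p-1}$ for $u>\lambda$, which is concave with $\varphi_\lambda''(0)=1$ and $u^{-1}\varphi_\lambda'(u)\to0$. Applying \eqref{comparisongpxp} to $t/\lambda$ gives
\[
|t|^p\le \lambda^{p}\bigl(p\,g_p(t/\lambda)+1-p/2\bigr)=p\,\lambda^{p-2}\varphi_\lambda(t)+(1-p/2)\lambda^p .
\]
Let $D=F_n^{-1}(U)-G^{-1}(U)$ be the quantile coupling, which is optimal for every convex cost by \eqref{Major78}. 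Then
\[
\kappa_p\le p\,\lambda^{p-2}\kappa_{\varphi_\lambda}+(1-p/2)\lambda^p .
\]
Moreover, since $\varphi_\lambda'(u)\le\lambda^{2-p}u^{p-1}$ for all $u\ge0$ (because $u\le\lambda^{2-p}u^{p-1}$ when $u\le\lambda$), we get
\[
\BBE\bigl(|X|^3\varphi_\lambda'(|X|/4)\bigr)\le \lambda^{2-p}4^{1-p}\BBE|X|^{p+2}.
\]

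\emph{Step 2 (the $\mu_3$-free part and optimization).} Following the proof of Theorem \ref{Thmkappa}, I replace $X_k$ by the $Y_k$ of \eqref{DefY1} and let $T_n$ be the corresponding partial sum. The square-root term of Theorem \ref{coravecphi}, that is the Lindeberg part, then gives
\[
\kappa_{\varphi_\lambda}(P_{S_n},P_{T_n})\le 2C_0(X)+8\gamma_2\sigma^{-2}\lambda^{2-p}4^{1-p}\BBE|X|^{p+2}.
\]
Plugging this into Step 1 yields
\[
\kappa_p(P_{S_n},P_{T_n})\le 2p\lambda^{p-2}C_0(X)+(1-p/2)\lambda^p+p\,2^{5-2p}\gamma_2\sigma^{-2}\BBE|X|^{p+2}.
\]
The last term no longer depends on $\lambda$. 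I then choose $\lambda=2\sqrt{C_0(X)}$, which should make the first two terms at most $2^pC_0(X)^{p/2}$. Writing $p\,2^{5-2p}=2^p\cdot p\,2^{5-3p}$ gives $\kappa_p\le 2^p\bigl(C_0^{p/2}+p2^{5-3p}\gamma_2\sigma^{-2}\BBE|X|^{p+2}\bigr)$, and hence the first term of the claim after taking the $1/p$-th power. The exact numerical choice of $\lambda$ needs checking, but the $\lambda$-dependent part is a one-variable minimization, and this choice lands exactly on the stated form.

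\emph{Step 3 (the $\mu_3$ part).} Since $W_p$ is a distance, $W_p(P_{S_n},P_{G})\le W_p(P_{S_n},P_{T_n})+W_p(P_{T_n},P_G)$. Because $p<2$, I use $W_p\le W_2$ on the second term. The distribution of $T_n$ is a scaled centered Poisson variable convolved with an independent Gaussian, so by \eqref{W2PoissonNormal} and the scaling factor $2\beta_3/\sigma^3$ this term is at most $2\cdot0.968\,\sigma^{-2}|\mu_3|=1.936\,\sigma^{-2}|\mu_3|$, matching Remark \ref{ConstantsW2}.

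The main obstacle is Step 2. It requires that the proof of Theorem \ref{coravecphi} actually delivers the bound on $\kappa_{\varphi_\lambda}(P_{S_n},P_{T_n})$ for the intermediate law $T_n$, not just the final combined bound with the $|\mu_3|$-term inside. If it does not separate in this way, I would instead apply Theorem \ref{coravecphi} directly and use the triangle inequality for $W_{\varphi_\lambda}$ from Lemma \ref{Wphidistance}, at the price of worse constants.
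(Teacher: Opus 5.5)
Your proposal is correct and follows the paper's proof essentially step by step. The common steps are: the rescaled function $g_{p,a}(x)=a^2g_p(x/a)$ combined with \eqref{comparisongpxp}; the bound \eqref{Th2-3}--\eqref{Th2-4} on $\kappa_{\varphi}$ for the intermediate law, which the proof of Theorem \ref{coravecphi} does establish separately, so your Step 2 worry is unfounded; the estimate $g'_{p,a}(z)\le a^{2-p}z^{p-1}$; the optimal choice $a=2\sqrt{C_0}$; and $W_p\le W_2$ together with Proposition \ref{PropPoisson} for the $\mu_3$-term.

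The only difference is cosmetic. The paper first normalizes to $\sigma=1$ and uses $\sigma^2C_0(X/\sigma)=C_0(X)$ at the end, whereas you work at scale $\sigma$ directly. There your $T_n$ must be read as $\sigma T_n$ built from $\beta_3=\mu_3/\sigma^3$, so the Poisson scaling factor is $\alpha=2\mu_3/\sigma^2$ rather than ``$2\beta_3/\sigma^3$''; your final constant $1.936\,\sigma^{-2}|\mu_3|$ is nevertheless right.
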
 

Starting from Theorem \ref{coravecphi} and using the Cornish-Fisher expansion, one obtains the following extension
of the results of Rio \cite{Rio11} on asymptotic constants 
for general convex costs. 

\begin{Corollary} \label{asymptoticconstants} Let $(X_i)_{i \in {\mathbb Z}}$ be a sequence of iid real-valued random variables that are centered and in ${\mathbb L}^3$, with  variance $1$. Let $G_n$ be a  centered  normal  random variable with variance $n$
and $G$ be a random variable with standard normal law. 
Let $\varphi$ be any function in ${ \Psi} $ such that $\BBE  \big ( |X|^2 \varphi (  |X|   ) )< \infty$. 
\par\smallskip\no
{\bf (a)} If the distribution of $X$ is not a lattice distribution, then  
$$
\lim_{n\rightarrow \infty} \kappa_\varphi ( P_{S_n} , P_{G_n} ) = \BBE \bigl ( \varphi \bigl( \BBE ( X^3) (G^2- 1)/6 \bigr) \bigr) . 
$$
\par\smallskip\no
{\bf (b)} If $X$  takes its values 
in the arithmetic progression $\{ a+kh : k\in\mathbb Z \}$ ($h$ being maximal) and $V$ is a random variable with  uniform law over 
$[-1/2 , 1/2]$, independent of $(X_i)_{i \in {\mathbb Z}}$, then 
$$
\lim_{n\rightarrow \infty} \kappa_\varphi ( P_{S_n +hV} , P_{G_n} ) = \BBE \bigl ( \varphi \bigl( \BBE ( X^3) (G^2- 1)/6 \bigr) \bigr) . 
$$
\end{Corollary}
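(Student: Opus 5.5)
The plan is to work with the normalized sums $T_n = S_n/\sqrt n$ (or $(S_n+hV)/\sqrt n$ in case (b)) and to reduce everything to convergence in distribution of the quantile couplings, plus uniform integrability coming from Theorem \ref{coravecphi}. By \eqref{Major78}, $\kappa_\varphi(P_{S_n},P_{G_n}) = \int_0^1 \varphi\bigl(F_n^{-1}(u) - \sqrt n\,\Phi^{-1}(u)\bigr)\,du$, where $F_n$ is the c.d.f. of $S_n$. Write $U$ for a uniform variable on $[0,1]$, $G=\Phi^{-1}(U)$, and set $D_n = F_n^{-1}(U) - \sqrt n\, G$. Then $\kappa_\varphi(P_{S_n},P_{G_n}) = \BBE(\varphi(D_n))$, and the statement follows from two facts:
\[
D_n \to \BBE(X^3)(G^2-1)/6 \quad \text{in probability},
\]
and the family $(\varphi(D_n))_n$ is uniformly integrable.

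For the convergence, I would use the Cornish--Fisher expansion. In the nonlattice case, Esseen's one-term Edgeworth expansion gives, uniformly in $x$,
\[
\BBP(S_n\le x\sqrt n)=\Phi(x)-\frac{\BBE(X^3)}{6\sqrt n}(x^2-1)\Phi'(x)+o(n^{-1/2}),
\]
which holds under $\BBE|X|^3<\infty$ alone. Inverting this at a fixed $u\in]0,1[$ gives $F_n^{-1}(u)/\sqrt n = \Phi^{-1}(u) + \BBE(X^3)\bigl((\Phi^{-1}(u))^2-1\bigr)/(6\sqrt n) + o(n^{-1/2})$, locally uniformly in $u$. Multiplying by $\sqrt n$ yields $D_n \to \BBE(X^3)(G^2-1)/6$ almost surely. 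The inversion is elementary because $\Phi'$ is bounded below on compacts of $]0,1[$ in the $u$ variable, and $F_n$ is continuous up to $o(n^{-1/2})$ jumps.

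In the lattice case, adding $hV$ smooths the lattice. The Edgeworth expansion for $S_n$ carries an extra periodic term $h\,Q(\cdot)\Phi'/\sqrt n$, with $Q$ the sawtooth function, and convolution with $hV$ removes it at the quantile level. Concretely, the c.d.f. of $S_n+hV$ is the linear interpolation of the lattice c.d.f. at midpoints. At those midpoints the lattice expansion, corrected by the sawtooth, coincides with the smooth expansion up to $o(n^{-1/2})$, so the same Cornish--Fisher expansion holds and the limit is identical.

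For uniform integrability, I would apply Theorem \ref{coravecphi} not to $\varphi$ but to a faster function $\tilde\varphi\in\Psi$ with $\varphi=o(\tilde\varphi)$ at infinity and $\BBE(X^2\tilde\varphi(|X|))<\infty$. Such a $\tilde\varphi$ exists by a de la Vallée-Poussin type construction: take $\tilde\varphi' = \varphi'\cdot\ell$ with $\ell$ slowly increasing to infinity, then concavify and check that $\tilde\varphi'(x)/x\to0$ is still achievable. Remark \ref{remTh22} then gives $\sup_n \BBE(\tilde\varphi(D_n))<\infty$, which yields uniform integrability of $\varphi(D_n)$.

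In case (b), I would first handle $\kappa_{\tilde\varphi}(P_{S_n+hV},P_{S_n})\le \BBE\tilde\varphi(hV)$ by the triangle inequality for $W_{\tilde\varphi}$ (Lemma \ref{Wphidistance}), which is bounded. The uniformly integrable, almost surely convergent family then gives the limit $\BBE\varphi(\BBE(X^3)(G^2-1)/6)$.

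I expect the main obstacle to be building $\tilde\varphi$ inside the class $\Psi$, since it must keep a concave derivative, satisfy $\tilde\varphi''(0)=1$, and have $\tilde\varphi'(x)=o(x)$. One route is to note that $\varphi'$ concave with $\varphi'(x)/x\to0$ allows inserting a slowly growing concave factor.
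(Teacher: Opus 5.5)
You follow the paper's architecture: the quantile coupling gives $\kappa_\varphi(P_{S_n},P_{G_n})=\BBE(\varphi(D_n))$, $D_n$ converges to $\BBE(X^3)(G^2-1)/6$ by the Cornish--Fisher expansion, and uniform integrability comes from Theorem~\ref{coravecphi}. For the convergence the paper simply cites Lemma 2.1 of Rio. Your derivation from Esseen's expansion, including the midpoint argument in the lattice case, is fine. Your transfer to $S_n+hV$ through the triangle inequality for $W_{\tilde\varphi}$ is also a valid substitute for the paper's pointwise bound $\bar Z_n\le Z_n+h/2$ combined with \eqref{subadvarphi}.

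The gap is in the uniform integrability step. It rests entirely on the existence of $\tilde\varphi\in\Psi$ with $\varphi=o(\tilde\varphi)$ and $\BBE(X^2\tilde\varphi(|X|))<\infty$, and you do not prove this. Your sketch checks the easy requirements, namely $\tilde\varphi''(0)=1$ and $\tilde\varphi'(x)=o(x)$, but not integrability. A product $\varphi'\ell$ of nondecreasing concave functions need not be concave (think of $x\cdot x$), so some concavification is unavoidable. Nothing in your sketch then controls integrability. The least concave majorant of $\varphi'\ell$ lies above the chord from the origin to $(a,\varphi'(a)\ell(a))$. So a rapid increase of $\ell$ before $a$ lifts it far above $\varphi'\ell$ on $[0,a]$, unless $\ell$ is constrained in terms of $\varphi'$ and the law of $X$. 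If you concavify from below instead, you must check that the unbounded gain over $\varphi$ survives.

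The paper never builds a single dominating function. It applies Theorem~\ref{coravecphi} to the $M$-dependent functions $\psi_M\in\Psi$ from the proof of Lemma~\ref{lmaUI}: $\psi_M'(x)=x$ on $[0,M]$ and $\psi_M'(x)=M\varphi'(x)/\varphi'(M)$ for $x\ge M$. It then uses
$\varphi(z)\mathbf{1}_{z\ge M}\le z\varphi'(z)\mathbf{1}_{z\ge M}\le(\varphi'(M)/M)\,z\psi_M'(z)\le 2(\varphi'(M)/M)\,\psi_M(z)$.
This bounds $\BBE(\varphi(Z_n)\mathbf{1}_{Z_n\ge M})$, uniformly in $n$, by a constant times $\varphi'(M)/M+\epsilon(M)$, where $\epsilon(M)=(\varphi'(M)/M)\BBE(|X|^3\psi_M'(|X|))\to 0$ by dominated convergence.

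The same two facts repair your route, because $\Psi$ is stable under mixtures with weights summing to one. Set $\tilde\varphi=\sum_{j\ge 0}p_j\psi_{M_j}$, where for $j\ge1$ you choose $M_j/\varphi'(M_j)\ge 4^j$, $\epsilon(M_j)\le 4^{-j}$ and $p_j=2^j\varphi'(M_j)/M_j$. Take $M_0>0$ arbitrary and $p_0=1-\sum_{j\ge1}p_j\ge 0$.
\begin{itemize}
\item Since $\psi_{M_j}(x)/\varphi(x)\to M_j/\varphi'(M_j)$, you get $\liminf_{x\to\infty}\tilde\varphi(x)/\varphi(x)\ge 2^j$ for every $j$.
\item Integrability holds: $\BBE(X^2\tilde\varphi(|X|))\le\BBE(|X|^3\tilde\varphi'(|X|))\le p_0\BBE(|X|^3\psi_{M_0}'(|X|))+\sum_{j\ge1}2^{-j}<\infty$.
\end{itemize}
With this lemma supplied your argument is complete. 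The paper's version is shorter because it reads uniform integrability directly off the family of bounds indexed by $M$.
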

\smallskip
\begin{Remark} \label{remarkasymptoticconstants} Note that the above limits are equal to $0$ if and only if $\BBE (X^3)=0$. In addition, as in Rio \cite{Rio11}, 
one can also prove that if $X$  takes its values 
in the arithmetic progression $\{ a+kh : k\in\mathbb Z \}$ ($h$ being maximal) and $U$ is a random variable with  uniform law over $[0,1]$, independent
of $G$, then  
$$
\lim_{n\rightarrow \infty} \kappa_\varphi ( P_{S_n} , P_{G_n} ) = \BBE \bigl ( \varphi \bigl( \BBE ( X^3) (G^2- 1)/6 + h(U - 1/2) \bigr) \bigr) . 
$$
\end{Remark}
\smallskip
Let $U$ denote a random variable with uniform law over $[0,1]$. From \eqref{Major78}, 
\beq \label{expressionkappaphi}
\kappa_\varphi (P_{S_n} , P_{G_n} ) = \BBE ( \varphi ( Z_n ) ), \text{ with } Z_n = | F_{S_n}^{-1} (U) - \sqrt{n} \Phi^{-1} (U) | . 
\eeq
The proof of Corollary \ref{asymptoticconstants} is based on the uniform integrability of the sequence $(\varphi (Z_n))_{n\geq 1}$,
proved in Lemma \ref{lmaUI}. Using this lemma, one can also obtain asymptotic constants for the weighted costs defined below.
Let $g : [0,1] \mapsto \BBR$ be a positive and measurable function. Define the weighted cost $\kappa_{1,g}$ by 
\beq \label{defkappa1p}
\kappa_{1,g} ( P_X, P_Y ) = \BBE \bigl( g(U) |F_X^{-1} (U) - F_Y^{-1} (U) | \bigr) .
\eeq
\par
For $\varphi$ in $\Psi$, the Young dual $\varphi^*$ of $\varphi$ is defined by
$\varphi^* (x) = \sup_{t\in \BBR} (xt - \varphi (t))$.
We denote by $L_{\varphi^*}$  the space of real-valued random variables $X$ such that $\BBE ( \varphi^* ( aX)  ) < \infty$
for some positive $a$. The corollary below provides asymptotic constants for the costs $\kappa_{1,g}$ when $X$
satisfies the integrability condition of Corollary \ref{asymptoticconstants} and  $g(U)$ belongs to the space $L_{\varphi^*}$. 

\begin{Corollary} \label{asymptotickappa1g} Let $(X_i)_{i \in {\mathbb Z}}$ be a sequence of iid real-valued random variables that are centered and in ${\mathbb L}^3$, with  variance $1$. Let $G_n$ be a  centered  normal  random variable with variance $n$
and $G$ be a random variable with standard normal distribution and $U = \Phi (G)$. 
Let $\varphi$ be any function in $\Psi $ such that $\BBE  \big ( |X|^2 \varphi (  |X|   ) )< \infty$
and $g : ]0,1[ \mapsto \BBR$ be a positive and measurable function such that $g(U)$ belongs to $L_{\varphi^*}$. 
\par\smallskip\no
{\bf (a)} If the distribution of $X$ is not a lattice distribution, then  
$$
\lim_{n\rightarrow \infty} \kappa_{1,g} ( P_{S_n} , P_{G_n} ) = | \BBE (X^3)| \, \BBE \bigl( g(\Phi (G)) |G^2 - 1 |/6 \bigr). 
$$
\par\smallskip\no
{\bf (b)} If $X$  takes its values 
in the arithmetic progression $\{ a+kh : k\in\mathbb Z \}$ ($h$ being maximal) and $V$ is a random variable with  uniform law over 
$[-1/2 , 1/2]$, independent of $(X_i)_{i \in {\mathbb Z}}$, then 
$$
\lim_{n\rightarrow \infty} \kappa_{1,g} ( P_{S_n +hV} , P_{G_n} ) = | \BBE (X^3)| \, \BBE \bigl( g(\Phi (G)) |G^2 - 1 |/6 \bigr). 
$$
\end{Corollary}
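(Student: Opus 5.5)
We express $\kappa_{1,g}$ through the random variable $Z_n$ of \eqref{expressionkappaphi}, taking the coupling $U = \Phi(G)$. Writing $G_n = \sqrt n\, G$ and $F_{S_n}^{-1}(U) = F_{S_n}^{-1}(\Phi(G))$, we have
\[
\kappa_{1,g} ( P_{S_n} , P_{G_n} ) = \BBE \bigl( g(U) Z_n \bigr), \qquad Z_n = | F_{S_n}^{-1} (U) - \sqrt{n} \Phi^{-1} (U) | ,
\]
and similarly in the lattice case with $S_n + hV$ in place of $S_n$. The plan is to establish two facts:
\begin{itemize}
\item[(i)] $Z_n$ converges almost surely (or at least in probability) to $Z := |\BBE(X^3)|\,|G^2-1|/6$;
\item[(ii)] the family $(g(U) Z_n)_{n\geq 1}$ is uniformly integrable.
\end{itemize}
Vitali's theorem then gives $\BBE(g(U)Z_n) \to \BBE(g(U)Z)$, which is the stated limit.

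\textbf{Step (i): pointwise convergence.} This is the input already used in the proof of Corollary \ref{asymptoticconstants}. Fix $u\in]0,1[$ and set $z = \Phi^{-1}(u)$. In the non-lattice case, the one-term Edgeworth expansion $F_{S_n}(\sqrt n\, y) = \Phi(y) - \BBE(X^3)(y^2-1)\Phi'(y)/(6\sqrt n) + o(n^{-1/2})$, uniformly in $y$, is inverted into the Cornish--Fisher expansion
\[
F_{S_n}^{-1}(u) = \sqrt n\, z + \BBE(X^3)(z^2-1)/6 + o(1).
\]
This inversion uses only $\mathbb L^3$ together with the non-lattice hypothesis. In the lattice case, adding $hV$ smooths the distribution and restores the same expansion. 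Hence $Z_n \to Z$ for every $u$, and therefore almost surely. Since the paper says Corollary \ref{asymptoticconstants} is proved from this same Cornish--Fisher argument, I will take this step from there.

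\textbf{Step (ii): uniform integrability.} This is the main step. By Lemma \ref{lmaUI}, $(\varphi(Z_n))_{n\geq 1}$ is uniformly integrable under $\BBE(X^2\varphi(|X|))<\infty$. Choose $a>0$ with $\BBE(\varphi^*(a g(U)))<\infty$. For any $b>0$, Young's inequality gives
\[
a b\, g(U) Z_n \leq \varphi^*(a g(U)) + \varphi(b Z_n).
\]
The idea is to take $b$ large, so that the factor $1/(ab)$ shrinks the tail contribution of the first term. For this we need $(\varphi(bZ_n))_n$ to be uniformly integrable for every fixed $b\geq 1$, not just for $b=1$.

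I expect this to be the delicate point, because $\varphi \in \Psi$ is not assumed to satisfy a $\Delta_2$ condition. A possible route is the following. Since $\varphi'$ is concave with $\varphi'(0)=0$, the map $t\mapsto \varphi'(t)/t$ is nonincreasing, so $\varphi(bt) \leq b^2\varphi(t)$ for $b\geq 1$. Indeed, $\varphi(bt) = \int_0^{bt}\varphi' = b\int_0^t \varphi'(bs)\,ds \leq b^2 \int_0^t \varphi'(s)\,ds = b^2\varphi(t)$, where the inequality is $\varphi'(bs)/(bs) \leq \varphi'(s)/s$. Hence uniform integrability of $(\varphi(Z_n))_n$ gives uniform integrability of $(\varphi(bZ_n))_n$ for each fixed $b$.

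\textbf{Conclusion.} On any event $A$,
\[
\BBE\bigl(g(U) Z_n \mathbf 1_A\bigr) \leq (ab)^{-1}\BBE\bigl(\varphi^*(a g(U))\bigr) + (ab)^{-1} \sup_m \BBE\bigl(\varphi(b Z_m)\mathbf 1_A\bigr).
\]
Given $\varepsilon>0$, first choose $b$ so large that the first term is below $\varepsilon$. Then choose $\BBP(A)$ small enough that the second term is below $\varepsilon$, uniformly in $n$, which is possible by the uniform integrability of $(\varphi(bZ_m))_m$. Together with $\sup_n \BBE(g(U)Z_n)<\infty$, which follows from the same inequality with $A$ the whole space, this shows that $(g(U)Z_n)_n$ is uniformly integrable. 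Combining with step (i) and applying Vitali's theorem proves both (a) and (b).
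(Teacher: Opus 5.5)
Your argument is correct and uses the same ingredients as the paper's proof: the Cornish--Fisher limit of $F_{S_n}^{-1}(u)-\sqrt n\,\Phi^{-1}(u)$, Young's inequality $a\,g(U)Z_n\le \varphi^*(ag(U))+\varphi(Z_n)$, and Lemma \ref{lmaUI}. The paper packages them differently (uniform convergence plus dominated convergence on $[\varepsilon,1-\varepsilon]$, Young on the two tails) and keeps the restriction to the small set on the single integrable variable $\varphi^*(ag(U))$, so your rescaling by $b$ and the bound $\varphi(bt)\le b^2\varphi(t)$, while correct, are unnecessary.

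One small omission: for part (b), Lemma \ref{lmaUI} only concerns $Z_n$, so you also need uniform integrability of $\varphi(\bar Z_n)$, where $\bar Z_n=|F_{S_n+hV}^{-1}(U)-\sqrt n\,\Phi^{-1}(U)|$. This follows from $\bar Z_n\le Z_n+h/2$ and \eqref{subadvarphi}, which give $\varphi(\bar Z_n)\le 2\varphi(Z_n)+2\varphi(h/2)$, exactly as in the proof of Corollary \ref{asymptoticconstants}(b).
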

\smallskip
let us illustrate Corollary \ref{asymptotickappa1g} when $\varphi$ is defined by $\varphi (x) = (1+x) \log (1+x) - x$ for $x\geq 0$. Since 
$\varphi^* (x) = e^x-1-x$ for $x\geq 0$, the random variable
$(\Phi^{-1} (U) )^2$ belongs to $L_{\varphi^*}$. Consequently, in the non lattice case, if 
$\BBE (|X|^3 \log ( 1+ |X| ) )< \infty$, 
\[
\lim_{n\rightarrow \infty} \int_0^1 (\Phi^{-1} (u))^2 | F_{S_n}^{-1} (u) - \sqrt{n} \Phi^{-1} (u) | du = 
| \BBE (X^3)| \, \BBE \bigl( G^2 |G^2 - 1 |/6 \bigr). 
\]
Moreover $\BBE \bigl( G^2 |G^2 - 1 |/6 \bigr)= 2 (2\pi e)^{-1/2} + 1 - (4/3) \Phi (1)  \simeq 0.3622$. 
\smallskip
\begin{Remark} \label{signedfunctionals}  Using the same line of proof, one can easily extend the results of 
Corollary \ref{asymptotickappa1g} to signed functionals as follows. Let $\ell : \BBR \mapsto \BBR$ be a Lipschitz function 
and $g : ]0,1[ \mapsto \BBR$ be a measurable function such that $g(U)$ belongs to $L_{\varphi^*}$. Then, in the non lattice case
\beq \label{asymptoticsignedfunctionals} 
\lim_{n\rightarrow \infty} \int_0^1 g(u) \ell  \bigl( F_{S_n}^{-1} (u) - \sqrt{n} \Phi^{-1} (u) \bigr) du = 
\BBE \bigl(  g(\Phi (G)) \ell \bigl(  \BBE (X^3) (G^2 - 1 )/6 \bigr) \, \bigr)
\eeq
In the lattice case the same result holds with $F_{S_n+hV}^{-1} (u)$ instead of $F_{S_n}^{-1} (u)$ on left hand. 
Applying \eqref{asymptoticsignedfunctionals} with $\ell (x) = x$ and $g = ( \Phi^{-1} )^2$, one obtains that, 
in the non lattice case, if  $\BBE (|X|^3 \log ( 1+ |X| ) )< \infty$, 
\[
\lim_{n\rightarrow \infty} \int_0^1 (\Phi^{-1} (u))^2  ( F_{S_n}^{-1} (u) - \sqrt{n} \Phi^{-1} (u) ) du = 
\BBE (X^3) \, \BBE \bigl( G^2 (G^2 - 1 )/6 \bigr) = \BBE (X^3) /3. 
\]

\end{Remark}
\subsection{Coupling inequalities}

Enlarging the probability space if necessary,  recall that, for any $n \geq 1$, 
\[
\kappa_p ( P_{S_n} , P_{G_{n\sigma^2}} )   = \BBE \big (  \big |  S_n - G_{n\sigma^2} \big |^p\big ) 
\]
where the random variable $G_{n \sigma^2}$ is a centered normal random variable (r.v.) with variance $n \sigma^2$,  defined from $S_n$ via the quantile transformation as follows: 
\beq \label{defGn}
G_{n \sigma^2}  =  \sigma \sqrt{n} \Phi^{-1}  \bigl( F_n ( S_n - 0) + \delta ( F_n (  S_n) -F_n ( S_n- 0)  ) \bigr )  , 
\eeq
where $F_n$ is the c.d.f. of $S_n$,   $\delta$ is a r.v. with uniform distribution over $[0,1]$ independent of $(X_i)_{i \in {\mathbb Z}}$ and $F_n(x-0)$ means the left limit of $F_n$ at $x$.  

Recall that Inequality \eqref{RioBobkov} ensures that if the  random variables have a strong  moment of order $2 + p$ with $p\in ]1,2]$,  $\kappa_p ( P_{S_n} , P_{G_{n\sigma^2}} )  $ is uniformly bounded with respect to $n$.  Now, starting from the tools introduced in the proofs of the above 
results, we give upper bounds on the weak norm of order $p$ of the random variable $Z_n$ defined by 
\beq \label{defZn}
Z_n = |S_n - G_{n\sigma^2} |  \, , 
\eeq
where $G_{n \sigma^2}$ is defined by \eqref{defGn}, under suitable conditions on the tail of the random variables $X_k$. In order to get more general results, we will introduce the conditional value at risk of a real valued random variable and its generalized inverse. For a real-valued random variable $Z$ we define its conditional value at risk by: for any $u \in ]0,1[$
\beq \label{defCVAR}
\text{CVar}(Z) (u) =  \inf \bigl \{ t+ u^{-1}  \BBE (  (Z-t)_+ ) : t>0  \bigr \} \, .
\eeq
\par\smallskip
According to Proposition 5.4 in Pinelis \cite{Pi2014}, 
\[
\text{CVar}(Z) (u) = {\tilde Q}_Z (u) = \frac{1}{u} \int_0^u Q_Z(v) dv \, , 
\]
where $Q_Z$ is the tail quantile function of $Z$, i.e. $Q_Z (v) = F_Z^{-1} (1-v)$.  Note that ${\tilde Q}_Z$ was introduced by Hardy and Littlewood \cite{HL1930}.  
We also define  $\tilde H_Z$  by 
\[
\tilde H_Z (x) = \inf \{ (x-t)^{-1} \BBE ( (Z-t)_+ ) : t<x \}
\]
Define also the tail function $H_Z$ by $H_Z (x) = \BBP ( Z>x)$. By the Markov inequality $\tilde H_Z \geq H_Z $. 
From the variational formula \eqref{defCVAR}, $\tilde H_Z$ is the generalized inverse of $\tilde Q_Z$. Indeed,
for any $u$ in $]0,1]$, 
\beq \label{GeneralizedInverse}
\tilde H_Z (x) < u \text{ if and only if } x> \tilde Q_Z (u) .
\eeq
It follows that $\tilde H_Z$ is the tail function of  $\tilde Q_Z$. Hence an upper bound on $\tilde H_Z$ provides immediately an upper bound on $\tilde Q_Z$. Below we give an upper bound on $\tilde H_{Z_n}$, which will allow us to extend \eqref{RioBobkov} to weak moments.

\begin{Theorem} \label{UpperBoundHLtail} Under the conditions of Theorem \ref{Thmkappa} and with the same notations, for any positive $t$, 
\[
t^2 \tilde H_{Z_n} (t) \leq 42.943 \, \sigma^{-4} \lambda_3^2 + 5.2041  \, \sigma^{-2} \BBE \bigl( |X|^3 \min ( |X| , 2.8183 t ) \bigr) .
\]
\end{Theorem}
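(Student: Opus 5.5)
The key is to express $\tilde H_{Z_n}(t)$ through the convex costs $\kappa_{\psi_x}$ controlled by Theorem~\ref{Thmkappa}. By definition, $\tilde H_Z(t)=\inf_{s<t}(t-s)^{-1}\BBE((Z-s)_+)$. Since $Z_n=|S_n-G_{n\sigma^2}|$ is built from the quantile coupling, it is optimal simultaneously for every even convex cost, by \eqref{Major78}. So for any even convex $\psi$ we have $\BBE(\psi(Z_n))=\kappa_\psi(P_{S_n},P_{G_{n\sigma^2}})$.

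The plan is to bound $(Z-s)_+$ by a multiple of $\psi_x(Z)$ minus a constant. Recall that $\psi_x(z)=xz-x^2$ for $z\geq 2x$, and that $\psi_x(z)\geq xz-x^2$ for all $z$ by convexity (this is the tangent line). Hence
\[
(z-s)_+\leq x^{-1}\bigl(\psi_x(z)-(xs-x^2)\bigr)_+ ,
\]
and the choice $s=x$ gives $(z-x)_+\leq x^{-1}\psi_x(z)$. Is this right? For $z\leq 2x$ we have $\psi_x(z)=z^2/4$, and indeed $z^2/4\geq x(z-x)$ because $(z-2x)^2\geq 0$. So
\[
\BBE((Z_n-s)_+)\leq x^{-1}\kappa_{\psi_x}+(x-s)_+ .
\]
A better approach is to choose $s=x\theta$ and optimize. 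Concretely, take $s=t-a$ and $x=ct$ with constants $a=\alpha t$ and $c$ to be tuned. Then
\[
t^2\tilde H_{Z_n}(t)\leq t^2\,\frac{(ct)^{-1}\kappa_{\psi_{ct}}+\bigl(ct-(1-\alpha)t\bigr)_+}{\alpha t} .
\]
Choosing $c\leq 1-\alpha$ kills the positive part and leaves $t^2\tilde H_{Z_n}(t)\leq (\alpha c)^{-1}\kappa_{\psi_{ct}}$.

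Next we insert Theorem~\ref{Thmkappa}, squaring its bound via $(a+b)^2\leq(1+\eta)a^2+(1+\eta^{-1})b^2$. This gives
\[
\kappa_{\psi_{ct}}\leq(1+\eta)\bigl(C_0(X)+\gamma_2\sigma^{-2}\BBE(|X|^3\min(|X|,8ct))\bigr)+(1+\eta^{-1})(0.968)^2\sigma^{-4}\mu_3^2 .
\]
We then bound $\sigma^2\leq\sigma^{-4}\lambda_3^2$ (Lyapunov) and $\mu_3^2\leq\lambda_3^2$, so that all the $C_0$ and $\mu_3$ terms merge into a multiple of $\sigma^{-4}\lambda_3^2$. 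Matching the constant $2.8183=8c$ in the statement gives $c\approx0.3523$. The remaining constants $\alpha$ and $\eta$ would be tuned numerically so that the coefficients come out to $42.943$ and $5.2041$.

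The main obstacle is that this crude reduction may not reproduce the exact constants. The coefficient in front of the moment term, $(1+\eta)\gamma_2/(\alpha c)$, needs $5.2041/0.64584\approx 8.06$. With $c=0.352$ this forces $\alpha(1+\eta)^{-1}\approx0.352$, which is incompatible with $\alpha\leq 1-c$ only if $\eta$ is large. So I expect to need a sharper pointwise inequality than the tangent bound, for instance using the quadratic part $z^2/4$ more efficiently. Failing that, I would use an unsquared form in which the $\mu_3$ term is treated by the Poisson coupling directly. I would first try optimizing over $s$ with the exact minimum of $x^{-1}\psi_x$-type majorants of $(z-s)_+/(t-s)$, namely the best $\lambda$ with $(z-s)_+\leq\lambda\psi_x(z)$. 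Then I would adjust the constants.
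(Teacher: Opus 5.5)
The reduction itself is sound. By comonotonicity of the quantile coupling, $\BBE\psi(Z_n)=\kappa_\psi(P_{S_n},P_{G_{n\sigma^2}})$ for every even convex $\psi$. The tangent bound $(z-x)_+\le x^{-1}\psi_x(z)$ then gives $t^2\tilde H_{Z_n}(t)\le(\alpha c)^{-1}\kappa_{\psi_{ct}}$ whenever $c\le 1-\alpha$.

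The gap is that you never reach the stated constants, and the calibration you commit to cannot reach them. Imposing $8c=2.8183$ forces $\alpha c\le c(1-c)\approx 0.2282$. Write $K_1=\gamma_0+\gamma_1+\gamma_3\approx 5.3293$ and $K_2=0.968^2$. The coefficient of $\sigma^{-4}\lambda_3^2$ is then $(\alpha c)^{-1}\bigl((1+\eta)K_1+(1+\eta^{-1})K_2\bigr)\ge(\alpha c)^{-1}(\sqrt{K_1}+\sqrt{K_2})^2\approx 10.736/0.2282\approx 47.05$. This exceeds $42.943$ for every $\eta$. So the real obstruction is the $\lambda_3^2$ term, not the moment term you worried about; that one is fine for $\eta$ up to about $0.84$. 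No sharper pointwise inequality than the tangent bound is needed.

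The missing idea is to decouple the Markov level from the truncation level.
\begin{itemize}
\item Keep the optimal choice $x=s=t/2$ (that is, $\alpha=c=1/2$), so $t^2\tilde H_{Z_n}(t)\le 4\kappa_{\psi_{t/2}}$.
\item Apply Theorem \ref{Thmkappa} at $x=t/2$, whose truncation level is $8x=4t$.
\item Square with $\eta=\sqrt{K_2/K_1}\approx 0.4193$.
\item Only then lower the truncation, using $\min(y,\lambda u)\le\lambda\min(y,u)$ for $\lambda\ge 1$: here $\min(|X|,4t)\le (4/2.8183)\min(|X|,2.8183\,t)$.
\end{itemize}
This gives $4(\sqrt{K_1}+\sqrt{K_2})^2\approx 42.9425$ in front of $\sigma^{-4}\lambda_3^2$ and $4\gamma_2(1+\eta)(4/2.8183)\approx 5.2040$ in front of the moment term, which is the statement.

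The paper gets there differently and does not go through Theorem \ref{Thmkappa}. It splits $Z_n/\sigma\le|\sigma^{-1}S_n-T_n|+|T_n-G_n|$, with all three variables realized comonotonically. It applies the max-inequality of Lemma \ref{lemmaHtilde} at levels $\tilde\alpha t$ and $(1-\tilde\alpha)t$. Each piece is bounded via $\tilde H_{|Z|}(u)\le 4u^{-2}\BBE\psi_{u/2}(|Z|)$: Proposition \ref{boundfinal0} handles the first piece and the Poisson bound $0.937$ the second. Finally $\tilde\alpha$ is chosen to balance the two constants. This is where $2.8183\approx 4\tilde\alpha$ and $5.2041\approx 4\gamma_2/\tilde\alpha^2$ come from.

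The two routes give the same numbers, since $\min_\alpha\max\bigl(K_1\alpha^{-2},K_2(1-\alpha)^{-2}\bigr)=(\sqrt{K_1}+\sqrt{K_2})^2$ and $1/\tilde\alpha=1+\sqrt{K_2/K_1}$. Your repaired route even yields the slightly stronger intermediate bound with $4(1+\eta)\gamma_2\,\BBE\bigl(|X|^3\min(|X|,4t)\bigr)$.
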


From Theorem \ref{UpperBoundHLtail}, we now derive upper bounds  on the tail of $Z_n$
in case of weak moment of order $2 + p$ with $p\in ]1,2[$. 
For any $q \geq 1$ and a real-valued random variable $Z$, define the following weak moments of order $q$ of $Z$: 
\beq \label{weakmomentsq}
\Lambda_q(Z) =  \sup_{x >0} x^{q } H_{|Z|} (x)  \quad \text{and} \quad
\tilde \Lambda_q (Z) =\sup_{x >0} x^{q } \tilde H_{|Z|} (x) \, .
\eeq
Clearly $\Lambda_q (Z) \leq \tilde\Lambda_q (Z)$. Starting from Theorem \ref{UpperBoundHLtail}, we obtain the following upper bound
on the weak moments of $Z_n$.

\begin{Corollary} \label{corweakmoments} Let $(X_i)_{i \in {\mathbb Z}}$ be a sequence of iid real-valued random variables that are centered, with variance $\sigma^2$ and  such that $ \Lambda_{p+2} (X) < \infty$ for some $p \in ]1,2[$. Assume that  the probability space is large enough and, for any $n \geq 1$,  define $G_{n \sigma^2}$ by \eqref{defGn} and let 
$Z_n = | S_n -  G_{n \sigma^2}  |$. Then  
\[
\tilde\Lambda_p (Z_n) \leq  (a_1 \, \sigma^{-2} \lambda_3 )^p +  a_2 \bigl( (p-1) (2-p)\bigr)^{-1} (p+2)  \sigma^{-2} \Lambda_{p+2} (X) ,
\] 
with $\lambda_3 = \BBE (|X|^3)$, $a_1 = 6.5531$ and $a_2 = 5.2041 (2.8183)^{2-p}$.  In addition, if $\BBE (|X|^{p+2}) < \infty$, 
\[
\tilde\Lambda_p (Z_n) \leq  (a_1 \, \sigma^{-2} \lambda_3 )^p +  a_2 \sigma^{-2} \BBE (|X|^{p+2})  .
\]
\end{Corollary}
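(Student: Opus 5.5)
We derive the statement directly from Theorem \ref{UpperBoundHLtail}. By definition of $\tilde\Lambda_p$, it suffices to bound $t^p \tilde H_{Z_n}(t)$ uniformly in $t>0$. We treat small and large $t$ differently, using two facts: $\tilde H_{Z_n}(t)\le 1$, since $t'\to-\infty$ in the infimum defining $\tilde H$ gives ratios tending to $1$; and Theorem \ref{UpperBoundHLtail}.

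\textbf{Step 1 (the $\lambda_3$ term).} Set $A = 42.943\,\sigma^{-4}\lambda_3^2$ and $t_0=\sqrt A$. Note $a_1=6.5531\approx\sqrt{42.943}$, so $t_0=a_1\sigma^{-2}\lambda_3$.
\begin{itemize}
\item For $t\le t_0$ we have $t^p\tilde H_{Z_n}(t)\le t_0^p=(a_1\sigma^{-2}\lambda_3)^p$.
\item For $t> t_0$, Theorem \ref{UpperBoundHLtail} gives
\[
t^p\tilde H_{Z_n}(t)\le A\,t^{p-2}+5.2041\,\sigma^{-2}t^{p-2}\,\BBE\bigl(|X|^3\min(|X|,2.8183t)\bigr).
\]
Since $p<2$, the first term satisfies $A t^{p-2}\le A t_0^{p-2}=t_0^p$.
\end{itemize}
So it remains to bound the second term uniformly in $t$ by $a_2((p-1)(2-p))^{-1}(p+2)\sigma^{-2}\Lambda_{p+2}(X)$. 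In the strong moment case the target is $a_2\sigma^{-2}\BBE(|X|^{p+2})$.

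\textbf{Step 2 (the truncated moment).} Write $s=2.8183t$. Then $t^{p-2}=(2.8183)^{2-p}s^{p-2}$, which produces the factor $a_2=5.2041(2.8183)^{2-p}$. We need
\[
s^{p-2}\,\BBE\bigl(|X|^3\min(|X|,s)\bigr)\le C_p\,\Lambda_{p+2}(X).
\]
For the strong moment case, use $\min(|X|,s)\le |X|^{p-1}s^{2-p}$, valid since $p-1\in]0,1[$ and $\min(a,b)\le a^\theta b^{1-\theta}$. This gives $s^{p-2}\BBE(|X|^3\min(|X|,s))\le \BBE|X|^{p+2}$ directly, which yields the second statement.

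For the weak moment case, use the layer-cake formula $\BBE(f(|X|))=\int_0^\infty f'(x)H_{|X|}(x)\,dx$ with $f(x)=x^3\min(x,s)$:
\begin{itemize}
\item $f'(x)=4x^3$ on $[0,s]$ and $f'(x)=3x^2s$ for $x>s$;
\item insert $H_{|X|}(x)\le\min(1,\Lambda x^{-p-2})$, with $\Lambda=\Lambda_{p+2}(X)$.
\end{itemize}
The piece on $[0,s]$ is bounded by $\int_0^s4x^{1-p}\Lambda\,dx=\frac{4}{2-p}\Lambda s^{2-p}$. The piece on $]s,\infty[$ is bounded by $3s\Lambda\int_s^\infty x^{-p}\,dx=\frac{3}{p-1}\Lambda s^{2-p}$. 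Multiplying by $s^{p-2}$ gives the constant
\[
\frac{4}{2-p}+\frac{3}{p-1}=\frac{4(p-1)+3(2-p)}{(p-1)(2-p)}=\frac{p+2}{(p-1)(2-p)},
\]
which is exactly the stated constant.

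\textbf{Step 3 (combine).} Take the supremum over $t>0$ of the two cases in Step 1, together with the bound from Step 2. This gives the corollary, provided the maximum over the small-$t$ and large-$t$ cases is dominated by the sum of the bounds. It is, since each case is bounded by the sum.

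The main point requiring care is checking that the numerical constant matches, namely that $\sqrt{42.943}\le 6.5531$. This holds because $6.5531^2\approx42.943$. Everything else is routine.
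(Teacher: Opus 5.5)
Your proposal is correct and follows essentially the same route as the paper: Theorem \ref{UpperBoundHLtail} together with $\tilde H_{Z_n}\le 1$, the optimization of $t^p\min(1,At^{-2})$ at $t_0=\sqrt{A}$ (your case split $t\le t_0$ / $t>t_0$ replaces the paper's use of $\min(a+b,1)\le\min(a,1)+b$), and the truncated-moment bound \eqref{comparisonweakmoment}, which the paper only asserts and you verify correctly via the layer-cake formula. One small correction: the numerical check needs the inequality $6.5531^2=42.9431\ldots\ge 42.943$, i.e.\ $t_0\le a_1\sigma^{-2}\lambda_3$, rather than an approximate equality.
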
 

Now, define the Calderon weak norm of order $p$ (see DeVore and Lorentz \cite{DL93}, page 26) of a random variable $Z$ by 
\[
\Vert Z\Vert_{w , p} = \sup_{u\in ]0,1]} u^{1/p} \tilde Q_{|Z|} (u) . 
\]
One can easily deduce from \eqref{GeneralizedInverse} that 
\[
\tilde\Lambda_p ( Z_n) = \Vert Z_n \Vert_{w,p}^p .
\]
 Hence  Corollary \ref{corweakmoments} provides immediately the upper bound below:
\[ 
\Vert Z_n \Vert_{w , p}  \leq \tilde \kappa_p^{1/p} \  \text{ with } \  \tilde \kappa_p = (a_1\sigma^{-2} \lambda_3)^p + a_2 \bigl(
 (p-1) (2-p)\bigr)^{-1} (p+2) \sigma^{-2} \Lambda_{p+2} (X)  .
\]

\begin{Remark} \label{remarkcorcvar}
By subadditivity and monotonicity of the conditional value at risk (see Theorem 3.4 in  \cite{Pi2014}), one gets that for  real-valued random variables $X$ and $Y$, 
\[ \big |  {\tilde Q}_X (u)  -  {\tilde Q}_Y (u)  \big |  \leq  {\tilde Q}_{|X-Y|} (u)  \, .
\]
Hence, under the conditions of Corollary \ref{corweakmoments}, for any $n \geq 1$ and any $u \in ]0,1[$,  
\beq \label{ine1condquant}
 \big | {\tilde Q}_{S_n} (u)  -   {\tilde Q}_{G_{n\sigma^2}} (u)  \big |  \leq   \bigl( \tilde \kappa_p/u \bigr)^{1/p}     \, .
\eeq
Consequently
\[
{\tilde Q}_{S_n} (u)  \leq  \Bigl( \frac{{\tilde \kappa}_p }{u} \Bigr)^{1/p}  + \frac{\sigma \sqrt{n}}{u \sqrt{2 \pi}} \exp \Big (- \frac{ (\Phi^{-1} (u) )^2}{2} \Big )   \, .
\] 
Note that, if the random variables admit a strong moment of order $p+2$ for $p \in ]1,2] $, the upper bound \eqref{ine1condquant} can be improved. Indeed, Corollary 3.1 in Rio \cite{Rio2017CRAS}  
asserts that in this case there exists a positive constant $C_p$ such that for any $u$ in $]0,1/2]$,
\[
\big | {\tilde Q}_{S_n} (u)  -   {\tilde Q}_{G_{n\sigma^2}} (u)  \big |  \leq C_p  ( u  |\ln (u) |) ^{-1/p} \, . 
\]
\end{Remark}

We now give asymptotic constants for $\Lambda_p ( Z_n) $ and ${\tilde \Lambda}_p ( Z_n) $ under an additional condition on the tail of $|X|$.

\begin{Corollary} \label{asymptoticconstantsforLambda} Let $p \in [1,2[$ and $(X_i)_{i \in {\mathbb Z}}$ be a sequence of iid real-valued random variables in ${\mathbb L}^3$ that are centered, with variance $1$. If $p >1$, assume furthermore that 
\beq \label{conditionontail}
\lim_{t \rightarrow + \infty} t^{p+2} {\mathbb P} ( |X| > t ) =0 \, .
\eeq
\par\smallskip\no
{\bf (a)} Assume that the distribution of $X$ is not a lattice distribution. Then  
$$
\lim_{n\rightarrow \infty} \Lambda^{1/p}_p ( Z_n)= \frac{| \BBE ( X^3)|}{6}  \Lambda^{1/p}_p (G^2- 1) \text{ and } \lim_{n\rightarrow \infty} {\tilde \Lambda}^{1/p}_p ( Z_n)= \frac{| \BBE ( X^3)|}{6}  {\tilde \Lambda}^{1/p}_p (G^2- 1) .
$$
\par\smallskip\no
{\bf (b)} Assume that $X$  takes its values 
in the arithmetic progression $\{ a+kh : k\in\mathbb Z \}$ ($h$ being maximal). Let  $V$ be a random variable with  uniform law over 
$[-1/2 , 1/2]$, independent of $(X_i)_{i \in {\mathbb Z}}$ and ${\bar S}_n = S_n +hV$. Define ${\bar G}_n = \sqrt{n} \Phi^{-1} (F_{{\bar S}_n } ( {\bar S}_n ))$ and 
${\bar Z}_n = |{\bar S}_n - {\bar G}_n|$. Then
$$
\lim_{n\rightarrow \infty} \Lambda^{1/p}_p ( {\bar Z}_n)= \frac{| \BBE ( X^3)|}{6}  \Lambda^{1/p}_p (G^2- 1) \text{ and }
 \lim_{n\rightarrow \infty} {\tilde \Lambda}^{1/p}_p ( {\bar Z}_n)= \frac{| \BBE ( X^3)|}{6}  {\tilde \Lambda}^{1/p}_p (G^2- 1) .
$$
\end{Corollary}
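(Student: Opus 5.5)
The plan is to combine the convergence in law of $Z_n$ with a uniform control of the tails of $Z_n$ that comes from Theorem~\ref{UpperBoundHLtail}, using the tail condition \eqref{conditionontail}. We treat case (a); case (b) is identical with $\bar S_n$ in place of $S_n$.

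\textbf{Step 1: the limit in law.} Write $U = F_n(S_n-0)+\delta(F_n(S_n)-F_n(S_n-0))$, which is uniform on $[0,1]$. Then $Z_n = |F_{S_n}^{-1}(U)-\sqrt n\,\Phi^{-1}(U)|$ almost surely. By the Cornish--Fisher (Edgeworth) expansion already used for Corollary~\ref{asymptoticconstants}, in the non-lattice case
\[
F_{S_n}^{-1}(u)-\sqrt n\,\Phi^{-1}(u)\to \BBE(X^3)\bigl((\Phi^{-1}(u))^2-1\bigr)/6
\]
for every $u\in]0,1[$. Hence $Z_n$ converges almost surely, and so in law, to $Z:=|\BBE(X^3)|\,|G^2-1|/6$ with $G=\Phi^{-1}(U)$.

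\textbf{Step 2: uniform smallness of the tails.} We aim to show
\[
\lim_{A\to\infty}\sup_n \sup_{t\ge A} t^p\tilde H_{Z_n}(t)=0 .
\]
Theorem~\ref{UpperBoundHLtail} gives
\[
t^p\tilde H_{Z_n}(t)\le t^{p-2}\Bigl(42.943\,\lambda_3^2+5.2041\,\BBE\bigl(|X|^3\min(|X|,ct)\bigr)\Bigr).
\]
The first term tends to $0$ because $p<2$. For the second, write
\[
\BBE\bigl(|X|^3\min(|X|,ct)\bigr)=\int_0^\infty \BBP\bigl(|X|^3\min(|X|,ct)>s\bigr)\,ds
\]
and integrate the tail. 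Under \eqref{conditionontail}, $\BBP(|X|>x)=o(x^{-p-2})$, and a direct computation gives $\BBE(|X|^4\wedge (ct)|X|^3)=o(t^{2-p})$. For $p=1$ this holds simply because $X\in{\mathbb L}^3$, by dominated convergence on $t^{-1}\BBE(|X|^3\min(|X|,ct))$. So the whole bound tends to $0$ uniformly in $n$. Since $H\le\tilde H$, the same conclusion holds for $\Lambda$.

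\textbf{Step 3: convergence of the suprema.} We split $\sup_{t>0}$ into $t\in]0,\varepsilon]$, $t\in[\varepsilon,A]$ and $t\ge A$.

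\emph{Large $t$.} On $t\ge A$, Step 2 makes the contribution uniformly small, and the same holds for $Z$.

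\emph{Small $t$.} On $]0,\varepsilon]$ we use $t^p\tilde H\le t^p$, since $\tilde H_Z(t)\le1$ (let the parameter in its definition tend to $-\infty$). This is $\le\varepsilon^p$ for every $n$.

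\emph{Middle range.} On $[\varepsilon,A]$ we need $t^pH_{Z_n}(t)\to t^pH_Z(t)$ uniformly.
\begin{itemize}
\item For $H$: $Z$ has a continuous law, so convergence in law gives uniform convergence of the distribution functions (Pólya).
\item For $\tilde H$: $\tilde H_{Z_n}(t)=\inf_{s<t}(t-s)^{-1}\BBE((Z_n-s)_+)$. By Step 2 (the bound from Theorem~\ref{UpperBoundHLtail} is uniform in $n$ and integrable in $t$ since $p>1$), $(Z_n)$ is uniformly integrable. For $p=1$ we use instead that $\BBE(Z_n^2)$ is bounded by Theorem~\ref{Thmkappa}. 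Uniform integrability gives $\BBE((Z_n-s)_+)\to\BBE((Z-s)_+)$ uniformly in $s$, since these functions are $1$-Lipschitz in $s$. The infimum can be restricted to $s\ge -M$ and $s\le t-\eta$ with small loss, uniformly in $n$; this uses $\BBE((Z_n-s)_+)\ge \BBE(Z_n)-s$ and $Z$ having no atom.
\end{itemize}
Hence $\tilde H_{Z_n}\to\tilde H_Z$ uniformly on $[\varepsilon,A]$.

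Combining the three ranges gives $\Lambda_p(Z_n)\to\Lambda_p(Z)$ and $\tilde\Lambda_p(Z_n)\to\tilde\Lambda_p(Z)$. Homogeneity of both functionals then yields the constant $\frac{|\BBE(X^3)|}{6}\Lambda_p^{1/p}(G^2-1)$ (respectively with $\tilde\Lambda_p$). When $\BBE(X^3)=0$ both sides are $0$.

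\textbf{Main obstacle.} I expect the main difficulty to be the middle-range uniform convergence of $\tilde H_{Z_n}$, in particular near the endpoint $s\to t$ of the infimum. If a direct argument fails, I would instead use $\tilde H_Z(t)=\sup\{u:\tilde Q_Z(u)\ge t\}$ together with uniform convergence of $\tilde Q_{Z_n}(u)=u^{-1}\int_0^u Q_{Z_n}$ on compact $u$-intervals. That uniform convergence follows from uniform integrability, and strict monotonicity of $\tilde Q_Z$ would then give convergence of the inverses.
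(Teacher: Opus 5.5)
Your overall plan is sound: convergence in law from Cornish--Fisher, uniform control at infinity from Theorem~\ref{UpperBoundHLtail} and the tail condition, and a splitting of the supremum. However, one step is false as stated. For the $\tilde\Lambda_p$ part you need uniform integrability of $(Z_n)$ in the middle range, and for $p=1$ you justify it by saying that $\BBE(Z_n^2)$ is bounded by Theorem~\ref{Thmkappa}. When $p=1$ the corollary only assumes $X\in{\mathbb L}^3$. The bound of Theorem~\ref{Thmkappa} contains $\BBE(|X|^3\min(|X|,8x))$, so letting $x\to\infty$ controls $W_2^2(P_{S_n},P_{G_n})=\BBE(Z_n^2)$ only if $\BBE(X^4)<\infty$ (Remark~\ref{ConstantsW2}). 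Without a fourth moment $\BBE(Z_n^2)$ need not stay bounded. For instance, if $\BBP(X>x)\sim cx^{-7/2}$, the quantiles of $S_n$ at levels $1-v$ with $v\le 1/n$ are of order $(n/v)^{2/7}\gg\sqrt{n\log n}$, and their contribution alone is of order $n^{1/7}$. You also cannot fall back on $\sup_nH_{Z_n}(t)=o(1/t)$, since such tails do not imply uniform integrability.

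The repair is to read your Step 2 through \eqref{GeneralizedInverse}. Suppose $\sup_n\tilde H_{Z_n}(t)\le\eta/t$ for all $t\ge A$. Then for $u\le 2\eta/A$ and $t=2\eta/u$ you have $\tilde H_{Z_n}(t)<u$, hence $\int_0^uQ_{Z_n}(v)\,dv=u\tilde Q_{Z_n}(u)<ut=2\eta$ for every $n$. This says $\lim_{u\to0}\sup_n\int_0^uQ_{Z_n}(v)\,dv=0$, i.e.\ uniform integrability. It is exactly item (b) of Lemma~\ref{lmaUConQ} with $p=1$.

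There are two smaller points.

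\emph{Case (b).} $\bar S_n=S_n+hV$ is not covered by Theorem~\ref{UpperBoundHLtail}, so this case is not literally identical. You need $|\bar F_n^{-1}(u)-F_n^{-1}(u)|\le h/2$, which shows that $\bar Z_n$ is stochastically dominated by $Z_n+h/2$. Hence $\tilde H_{\bar Z_n}(t)\le\tilde H_{Z_n}(t-h/2)$, and Step 2 transfers; the limit in law comes from Lemma 2.1(b) of \cite{Rio11}.

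\emph{Middle range.} What lets you discard $s\in]t-\eta,t[$ uniformly over $t\in[\varepsilon,A]$ is $\BBE((Z-A)_+)>0$, not the absence of atoms. Indeed, for small $\eta$ and large $n$ this gives $(t-s)^{-1}\BBE((Z_n-s)_+)\ge\eta^{-1}\BBE((Z_n-A)_+)>1\ge\tilde H_{Z_n}(t)$. This needs $\BBE(X^3)\neq0$. The degenerate case $Z=0$ needs its own (easy) line rather than the remark that both sides vanish.

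With these repairs your argument in the variable $t$ is a valid alternative. The paper's proof is essentially your fallback plan. It works with $Q_{Z_n}$ and with $u\mapsto u\tilde Q_{Z_n}(u)=\int_0^uQ_{Z_n}$, whose monotonicity turns pointwise convergence into uniform convergence on $[\varepsilon,1]$ via Dini's theorem. It uses Theorem~\ref{UpperBoundHLtail} only through $\lim_{u\to0}\sup_nu^{1/p}\tilde Q_{Z_n}(u)=0$. That route avoids the variational formula for $\tilde H$ and the behaviour of the infimum near $s=t$. Yours stays in the variable $t$ in which Theorem~\ref{UpperBoundHLtail} is stated, at the price of that extra analysis.
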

\begin{Remark}
When $ \BBE ( X^3)=0$, the asymptotic constants are equal to zero. 
\end{Remark}

\section{Proofs} \label{sectionPR}

\setcounter{equation}{0}

\subsection{Preliminary considerations} \label{sectionPC}

We start with a smoothing lemma which is a consequence of Lemma 6.1 in \cite{DMR09}.  Below ${\mathcal F}_x$ is the class of functions defined in Proposition \ref{linkkappawasserstein}. 

\begin{Lemma}  \label{smoothlemma} Let $f$ in ${\mathcal F}_x$, $N$ be a standard normal and $B$ be an integrable real-valued random variable independent of $N$. Let $c_{j} = \Vert \phi^{(j)} \Vert_1$ where $\phi$ denotes the density of $N$. Then, for any integer $i \geq 3$ and any $t >0$, 
\[
 \Big | \frac{d^i}{d u^i} \BBE f ( u +  t N + B ) \Big |  \leq   \min \Big ( c_{i-1}  \, x  t ^{1-i} ,  c_{i-2}  \, t^{2-i} \Big ) .
\]

\end{Lemma}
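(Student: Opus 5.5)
Set $g(u) = \BBE f(u + B + tN) = \int g_0(u+ty)\phi(y)\,dy$ with $g_0(v)=\BBE f(v+B)$. Since $B$ is integrable and $f$ has bounded derivative, $f$ has linear growth, so $g_0$ is well defined. Moreover $g_0' = \BBE f'(\cdot+B)$ is bounded by $x$ and is $1$-Lipschitz, so $g_0$ lies in ${\mathcal F}_x$. This reduces the statement to the case $B=0$: it suffices to bound the $i$-th derivative of $u\mapsto \int f_0(u+ty)\phi(y)\,dy$ for an arbitrary $f_0 \in {\mathcal F}_x$. This is essentially the content of Lemma 6.1 in \cite{DMR09}; I would reprove it directly by Gaussian integration by parts.

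Change variables $v = u + ty$, so that $g(u) = t^{-1}\int f_0(v)\,\phi((v-u)/t)\,dv$. The kernel is smooth, and $f_0$ grows at most linearly while $\phi$ and all its derivatives decay rapidly. Hence differentiation under the integral sign is justified, and so is integration by parts with vanishing boundary terms. Differentiating $k$ times in $u$ gives
\[
g^{(k)}(u) = (-1)^k t^{-1-k}\int f_0(v)\,\phi^{(k)}((v-u)/t)\,dv .
\]
Integrating by parts once moves one derivative onto $f_0$ and removes a factor $t^{-1}$:
\[
g^{(i)}(u) = \pm\, t^{-(i-1)}\int f_0'(u+ty)\,\phi^{(i-1)}(y)\,dy .
\]
The bound $|f_0'|\le x$ then gives $|g^{(i)}(u)|\le x\,c_{i-1}\,t^{1-i}$.

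For the second bound, integrate by parts once more. Since $f_0'$ is $1$-Lipschitz, it is absolutely continuous with $|f_0''|\le 1$ almost everywhere. Applying $\int h'(y)\phi^{(i-2)}(y)\,dy = -\int h(y)\phi^{(i-1)}(y)\,dy$ with $h(y)=t^{-1}f_0'(u+ty)$ (this uses $i\ge 3$, so that $\phi^{(i-2)}$ is a genuine derivative of the Gaussian with rapid decay) yields
\[
g^{(i)}(u) = \pm\, t^{-(i-2)}\int f_0''(u+ty)\,\phi^{(i-2)}(y)\,dy ,
\]
hence $|g^{(i)}(u)|\le c_{i-2}\,t^{2-i}$. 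Taking the minimum of the two bounds gives the claim.

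The only delicate point is justifying the integration by parts and the differentiation under the integral sign given the linear growth of $f_0$ and the merely Lipschitz $f_0'$. This is routine: use dominated convergence with the Gaussian-polynomial decay of $\phi^{(k)}$, and absolute continuity of $f_0'$ for the second integration by parts. The sign conventions do not matter, since only absolute values are bounded.
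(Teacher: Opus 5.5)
Your proof is correct and takes the same route as the paper, which gives no argument beyond invoking Lemma 6.1 of \cite{DMR09}. Absorbing $B$ into $g_0=\BBE f(\cdot+B)\in\mathcal F_x$ and then integrating by parts against the Gaussian kernel, once using $|f_0'|\le x$ or twice using $|f_0''|\le 1$ a.e., is exactly what that citation supplies; one minor point is that the second integration by parts does not actually need $i\ge 3$, since it already works for $i=2$ (giving the bound $\Vert\phi\Vert_1=1$), so that parenthetical justification is unnecessary but harmless.
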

Elementary computations lead to 
\[
c_1  =\sqrt{ 2/\pi}  \, , \, c_2 = \sqrt{ 8/ (\pi {\rm e})}  \leq 0.9679 \, , \,
c_3 =  \sqrt{ 2/\pi }  \bigl  ( 1+ 4 {\rm e}^{-3/2}  \bigr)  \, . 
\]
In addition 
\[
c_4 =  4 \sqrt{ 3 / ( \pi {\rm e}^3) }\bigl (   ( 3 - \sqrt{6})^{1/2} {\rm e}^{ \sqrt{3/2}} +   ( 3 + \sqrt{6})^{1/2} {\rm e}^{- \sqrt{3/2}}   \bigr ) 
 \, , 
\]
\[
  c_5 = \frac{4}{ \sqrt{2 \pi}} \Big (  \frac32  + 4 ( 2 + \sqrt{10}) {\rm e}^{- (5 + \sqrt{10})/2}  + 4 (  \sqrt{10} - 2 ) {\rm e}^{- (5 - \sqrt{10})/2}  \Big )  \leq 5.9101  \, .
\]

\medskip

Recall that the class $\Psi$ has been defined in Definition \ref{defclassPsi}.

\begin{Lemma}  \label{Wphidistance}
Let $\varphi $ in $\Psi$. Then $W_{\varphi}$ is a distance  on  the set ${\mathcal  P}_{\varphi} ({\mathbb R} )$ of probability laws $\mu$ on the real line such that $\mu ( \varphi) < \infty$. 
\end{Lemma}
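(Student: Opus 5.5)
Finiteness of $W_\varphi$ on ${\mathcal P}_\varphi({\mathbb R})$ and the triangle inequality carry the content of Lemma \ref{Wphidistance}; symmetry and separation are routine. The plan is to use the representation \eqref{Major78}, which expresses $\kappa_\varphi(\mu,\nu)=\int_0^1\varphi(F^{-1}(u)-G^{-1}(u))\,du$ through the comonotone coupling, and then to show that $Z\mapsto \bigl(\BBE\varphi(Z)\bigr)^{1/2}$ satisfies a Minkowski-type inequality on the space of random variables $Z$ with $\BBE\varphi(Z)<\infty$.

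For finiteness, the key point is that $\varphi$ grows at most quadratically. Since $\varphi'$ is concave on ${\mathbb R}^+$ with $\varphi'(0)=0$, we have $\varphi'(2t)\le 2\varphi'(t)$, hence $\varphi(2t)\le 4\varphi(t)$. Together with evenness and convexity this gives $\varphi(a-b)\le \frac12\bigl(\varphi(2a)+\varphi(2b)\bigr)\le 2\varphi(a)+2\varphi(b)$. Therefore $\kappa_\varphi(\mu,\nu)<\infty$ for $\mu,\nu\in{\mathcal P}_\varphi$.

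Symmetry is immediate from the evenness of $\varphi$. For separation, $\varphi(t)>0$ for $t\ne0$: indeed $\varphi$ is convex with $\varphi''(0)=1$, so $\varphi'(t)>0$ for small $t>0$, and $\varphi'$ is nondecreasing. Hence $\kappa_\varphi=0$ forces $F^{-1}=G^{-1}$ a.e., which gives $\mu=\nu$.

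For the triangle inequality, take $\mu_1,\mu_2,\mu_3$ and realize them on $([0,1],du)$ as $X_i=F_i^{-1}(U)$. Then $\kappa_\varphi(\mu_1,\mu_3)=\BBE\varphi(X_1-X_3)$ and similarly for the other pairs. It therefore suffices to prove
\[
\sqrt{\BBE\varphi(A+B)}\le\sqrt{\BBE\varphi(A)}+\sqrt{\BBE\varphi(B)}.
\]
The idea is that $\varphi(t)=\chi(t^2)$ with $\chi(s)=\varphi(\sqrt s)$ concave, and then $\sqrt{\varphi}$ is subadditive in a suitable sense. The cleaner route is Remark \ref{remark2}, which writes $\varphi=2\int\psi_{t/2}\,d\nu(t)$. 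It is then enough to show that $N_x(Z):=\sqrt{\BBE\psi_x(Z)}$ is subadditive for each $x$. Once that holds, $\sqrt{\BBE\varphi(Z)}=\bigl\|\,t\mapsto N_{t/2}(Z)\,\bigr\|_{L^2(2\nu)}$, and Minkowski's inequality in $L^2(2\nu)$ transfers subadditivity from each $N_x$ to $\sqrt{\BBE\varphi}$.

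For the subadditivity of a fixed $\psi_x$, note that $\psi_x(t)=\inf_{s}\bigl(s^2/4+x|t-s|\bigr)$, the Huber-type infimal convolution; one checks that the infimum gives $t^2/4$ for $|t|\le 2x$ and $x|t|-x^2$ otherwise. Hence
\[
\BBE\psi_x(Z)=\inf_{V}\Bigl(\tfrac14\BBE V^2+x\BBE|Z-V|\Bigr).
\]
The remaining, and main, difficulty is to deduce from this the inequality $\sqrt{\BBE\psi_x(A+B)}\le\sqrt{\BBE\psi_x(A)}+\sqrt{\BBE\psi_x(B)}$. Plugging in $V=V_A+V_B$ gives
\[
\tfrac14\|V_A+V_B\|_2^2+x\BBE|A-V_A|+x\BBE|B-V_B|.
\]
This has to be compared with $(\sqrt a+\sqrt b)^2$, where $a$ and $b$ are the optimal values for $A$ and $B$. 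The quadratic part is controlled by $(\|V_A\|_2+\|V_B\|_2)^2$, but the linear parts do not combine in the same way. If this direct approach fails, I would first try to rescale the $L^1$ contributions, or to use a scaling argument exploiting $\psi_x(\lambda t)\le\lambda^2\psi_x(t)$ for $\lambda\ge1$ together with the convexity of $\psi_x$: write $A+B=\theta\frac{A}{\theta}+(1-\theta)\frac{B}{1-\theta}$ with $\theta=\sqrt a/(\sqrt a+\sqrt b)$. Then $\BBE\psi_x(A+B)\le\theta\,\theta^{-2}a+(1-\theta)(1-\theta)^{-2}b=(\sqrt a+\sqrt b)^2$. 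This scaling argument works directly for $\varphi$ as well, since $\varphi(\lambda t)\le\lambda^2\varphi(t)$ for $\lambda\ge1$, which follows from the concavity of $\varphi'$. So it bypasses the Huber representation, and the key verification reduces to this doubling-type scaling property.
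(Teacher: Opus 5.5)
Your proof is correct, and it reaches the triangle inequality by a different combination step than the paper's. Both arguments rest on the same homogeneity property: $x\mapsto\varphi(x)/x^2$ is nonincreasing on $]0,\infty[$, equivalently $\varphi(\lambda t)\le\lambda^2\varphi(t)$ for $\lambda\ge1$. This follows from the concavity of $\varphi'$ together with $\varphi'(0)=0$. Your passing remark that $s\mapsto\varphi(\sqrt s)$ is concave is exactly where the paper starts.

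The paper converts this property into the pointwise subadditivity $\sqrt{\varphi(a+b)}\le\sqrt{\varphi(a)}+\sqrt{\varphi(b)}$ for $a,b\ge0$. It then obtains \eqref{TI} by expanding $\bigl(\sqrt{\varphi(A)}+\sqrt{\varphi(B)}\bigr)^2$ and applying the Schwarz inequality, which is Minkowski's inequality in $\mathbb{L}^2$. That step never uses the convexity of $\varphi$. It does tacitly need $\varphi(A+B)\le\varphi(|A|+|B|)$ to handle signed $A,B$.

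You instead apply convexity of $\varphi$ to the random variables directly, with the weight $\theta=\sqrt a/(\sqrt a+\sqrt b)$. This is the standard Orlicz/Luxemburg-gauge argument. It works for signed $A,B$ without reducing to $|A|+|B|$ and makes the role of the quadratic-growth condition transparent. The paper's route is slightly more economical in hypotheses, since it would work for any even $\varphi$ that is nondecreasing on $\mathbb{R}_+$ with $\varphi(x)/x^2$ nonincreasing, convex or not.

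Two small points on your write-up. First, add the trivial case $a=0$ or $b=0$, where $\theta\in\{0,1\}$ breaks the decomposition. Then $A=0$ or $B=0$ almost surely, by the positivity of $\varphi$ away from $0$ that you established. Second, you can drop the detours through Remark \ref{remark2} and the Huber representation. The scaling argument applied to $\varphi$ itself is already a complete proof, so it should be your main line rather than a fallback.

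Your checks of finiteness (via $\varphi(2t)\le4\varphi(t)$ and convexity) and of separation are correct. They fill in points the paper treats as clear.
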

\noindent {\bf Proof of Lemma \ref{Wphidistance}.} Let $\mu_1$, $\mu_2$ and  $\mu_3$ in  ${\mathcal  P}_{\varphi} ({\mathbb R} ) $ with respective distribution functions $F_1$, $F_2$ and $F_3$. 

Clearly 
$W_{\varphi} ( \mu_1, \mu_2 )  \geq 0$ and $W_{\varphi} ( \mu_1, \mu_2 )=0$  iff $\mu_1 = \mu_2$. We now prove the triangular inequality.  Let $U \sim {\mathcal U} ( [0,1] ) $. Set $A = F_1^{-1} (U) -   F_2^{-1} (U)$ and $B = F_2^{-1} (U) -   F_3^{-1} (U)$. With these notations, we have
\[
\kappa_{\varphi} ( \mu_1, \mu_3) = \BBE \big ( \varphi ( A+B) \big )  \, , \,  \kappa_{\varphi} ( \mu_1, \mu_2) = \BBE \big ( \varphi ( A) \big )   \, , \,  \kappa_{\varphi} ( \mu_2, \mu_3) = \BBE \big ( \varphi ( B) \big )  \, .
\]
Hence proving the triangular inequality is equivalent to prove that 
\beq \label{TI}
\sqrt{\BBE \big ( \varphi ( A+B) \big )} \leq  \sqrt{\BBE \big ( \varphi ( A) \big )}  + \sqrt{\BBE \big ( \varphi ( B) \big )}  \, .
\eeq
To prove \eqref{TI}, we first note that $\sqrt{\varphi}$ is subadditive on ${\mathbb R}_+$.  Indeed,   since $\varphi'$ is concave and  $\varphi' (0) =0$, $x \mapsto \frac{\varphi' (x) }{x} $ is nonincreasing on $]0, \infty[$ and so 
$x \mapsto \frac{\varphi (x) }{x^2} $ also since $\varphi(0) =0$, by l'Hospital rule for monotonicity (see Corollary 1.3 and Remark 1.2 in Pinelis \cite{Pi}). Hence, since $x \mapsto x^{-1} \sqrt{\varphi (x) }$ is nonincreasing on $]0, \infty[$, for any positive reals $a$ and $b$, 
\[
\frac{ \sqrt{\varphi (a+b)} }{a+b}  \leq \min \Big (  \frac{ \sqrt{\varphi (a)} }{a}  , \frac{ \sqrt{\varphi (b)} }{b} \Big )  \leq 
\frac{a}{a+b} \frac{ \sqrt{\varphi (a)} }{a}  + \frac{b}{a+b}  \frac{ \sqrt{\varphi (b)} }{b}  \, .
\]
This implies that, for any positive reals $a$ and $b$, 
\beq  \label{subadvarphi}
\sqrt{\varphi (a+b)}  \leq \sqrt{\varphi (a)}  + \sqrt{\varphi (b)} \, .
\eeq
We go back to the proof of \eqref{TI}. Using  \eqref{subadvarphi}, we get 
\[
\BBE \big ( \varphi ( A+B) \big )  \leq  \BBE \big (  \big ( \sqrt{ \varphi (  A) }  + \sqrt{ \varphi (  B) }  \big )^2\big ) \, .
\]
Hence, by the Schwarz inequality, 
\begin{multline*}
\BBE \big ( \varphi ( A+B) \big )  \leq \BBE \big ( \varphi ( A) \big )  + \BBE \big ( \varphi ( B) \big )   + 2   \BBE \big ( \sqrt{ \varphi (  A) } \sqrt{ \varphi (  B) }  \, \big ) \\
\leq  \Big (   \sqrt{\BBE \big ( \varphi ( A) \big )  } + \sqrt{ \BBE \big ( \varphi ( B) \big )  } \Big )^2 \, , 
\end{multline*}
which ends the proof of \eqref{TI}.  \qed

\medskip

The next result gives a quantitative upper bound for the Wasserstein distance of order $2$  between the Poisson distribution and the  normal distribution with the same mean and same variance. The proof  will be given in Appendix.

\begin{Proposition} \label{PropPoisson} Let $(\Pi (t))_{t \geq 0}$ be a Poisson Process with parameter $1$. For any $m >0$ and any $\alpha \neq 0$,  denote by 
$\mu_{m, \alpha} $  the law of $ \alpha (  \Pi ( \alpha^{-2} m ) -  \alpha^{-2} m) $ and $\nu_m$ the ${\mathcal N} ( 0,m)$ law.  Then
$W_2^2 \big ( \mu_{m, \alpha} , \nu_m \big )  \leq  0.937 \, \alpha^2$.
\end{Proposition}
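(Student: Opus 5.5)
By scaling, $\mu_{m,\alpha}$ is the image of the law of $\Pi(\lambda)-\lambda$ with $\lambda=\alpha^{-2}m$ under $x\mapsto \alpha x$, and $\nu_m$ is the image of ${\mathcal N}(0,\lambda)$ under the same map. Since $W_2$ is homogeneous of degree one under dilations, $W_2^2(\mu_{m,\alpha},\nu_m)=\alpha^2 W_2^2(\mu_\lambda,\nu_\lambda)$, where $\mu_\lambda$ and $\nu_\lambda$ are as in the introduction. It therefore suffices to prove $W_2^2(\mu_\lambda,\nu_\lambda)\le 0.937$ for every $\lambda>0$.

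For small $\lambda$, say $\lambda\le\lambda_0$, I use the crude independent coupling: $W_2^2\le \BBE\bigl((\Pi(\lambda)-\lambda-G_\lambda)^2\bigr)=2\lambda$. This already gives the bound for $\lambda\le 0.468$. On a moderate range I would sharpen it with the quantile coupling from \eqref{Major78}, evaluated numerically or bounded by hand: for small $\lambda$ the Poisson law is concentrated on $\{0,1\}$, so $F^{-1}$ is a step function and the integral $\int_0^1 (F^{-1}(u)-\sqrt\lambda\,\Phi^{-1}(u))^2du$ can be controlled explicitly.

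For large $\lambda$, I follow the announced route: a Tusn\'ady-type lemma from Massart \cite{Ma2002} combined with a dyadic decomposition. Write $\lambda=2^k\lambda_1$ with $\lambda_1$ in a fixed range $[\lambda_0,2\lambda_0[$. Then $\Pi(\lambda)$ is a sum of $2^k$ independent Poisson$(\lambda_1)$ variables, organised along a binary tree. At each node, conditionally on the parent total $\Pi$, the left child is Binomial$(\Pi,1/2)$. Massart's refined Tusn\'ady lemma couples a symmetric binomial with a Gaussian so that the quantile error satisfies $|B-\tfrac n2-\tfrac{\sqrt n}{2}Z|\le 1+Z^2/8$ or a similar bound. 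The coupling is built top-down: first the root total $\Pi(\lambda)$ is coupled with $G_\lambda$; then the Gaussian increments are generated recursively so that the Gaussian Brownian-type tree reproduces $G_\lambda$ at the root. This is the classical KMT construction, but here only the root matters. Since the target is the $W_2$ distance of the root, the simpler approach is to exploit $W_2$'s behaviour under convolution. Because $\mu_{2\lambda}=\mu_\lambda*\mu_\lambda$ and $\nu_{2\lambda}=\nu_\lambda*\nu_\lambda$, the quantile couplings on each half give $\BBE(D_1+D_2)^2=2\BBE D^2+2(\BBE D)^2=2W_2^2$ with $D=F^{-1}(U)-G^{-1}(U)$ and $\BBE D=0$. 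That doubling loses too much, which is why the dyadic/Tusn\'ady bound is needed. It shows that the error does not grow with depth, because the bias correction at each level is of order $1/\sqrt{\text{size}}$ and is summable over levels.

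The main obstacle is obtaining the explicit constant $0.937$. This requires tracking the second moment of the accumulated coupling errors across the dyadic levels. Concretely, I expect a recursion $e_{j+1}\le e_j+c\,2^{-j}\cdot(\text{moment terms})$ together with a Cauchy--Schwarz type combination $\sqrt{e}\le\sum$. Choosing the starting block $\lambda_0$ so that the base bound and the geometric tail together stay below $0.937$ will require careful numerics. I would first try to bound the base block with the quantile coupling computed numerically and then sum the Massart error terms using $\BBE(1+Z^2/8)^2=1+1/4+3/64$.
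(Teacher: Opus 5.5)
The scaling reduction to $\alpha=1$ is correct and is how the paper begins. The gap is in the large-$\lambda$ step: you never produce a coupling of $\Pi(\lambda)-\lambda$ with $\mathcal N(0,\lambda)$ whose error stays bounded.

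In your tree, $\Pi(\lambda)$ sits at the root. The binomial splits below it only decide how that total is shared among the leaves; they never enter $\Pi(\lambda)$. So no Tusn\'ady estimate at an internal node says anything about the root error. In KMT the root is coupled by a plain quantile transform, whose error is exactly $W_2(\mu_\lambda,\nu_\lambda)$, the quantity to be bounded.

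The recursion $e_{j+1}\le e_j+c\,2^{-j}$ also has no mechanism behind it. Passing from $\lambda$ to $2\lambda$, there are only two natural routes:
either $\Pi(2\lambda)-2\lambda$ is a sum of two independent copies, which is your doubling;
or $\Pi(2\lambda)-2\lambda=2(\Pi(\lambda)-\lambda)-\tilde U$ with $\tilde U=2\Pi(\lambda)-\Pi(2\lambda)$. This puts a factor $4$ in front of $e(\lambda):=W_2^2(\mu_\lambda,\nu_\lambda)$, and $\tilde U$ is not centered given $\Pi(\lambda)$.

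Your base-range step, ``evaluated numerically'' over a continuum of $\lambda$, is not a proof as stated either, though it becomes unnecessary with the fix below.

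The missing idea is to run the dyadic scheme upward. Put $\Pi(\lambda)$ at a leaf of a tree whose root has parameter $2^N\lambda$, and write $\Pi(\lambda)-\lambda=\frac12(\Pi(2\lambda)-2\lambda)+\frac12\tilde U$. Now, conditionally on $\Pi(2\lambda)=n$, $\tilde U$ is a sum of $n$ independent Rademacher signs. Take an optimal coupling $(\Pi(2\lambda),G_{2\lambda})$. Let $\xi$ be the conditional quantile transform of $\tilde U$ given $\Pi(2\lambda)$, so $\xi\sim\mathcal N(0,1)$ is independent of $(\Pi(2\lambda),G_{2\lambda})$. Set $G_\lambda=\frac12G_{2\lambda}+\frac12\sqrt{2\lambda}\,\xi\sim\mathcal N(0,\lambda)$.

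Since $\tilde U$ and $\xi$ are both conditionally centered, the cross term vanishes, and $e(\lambda)\le\frac14e(2\lambda)+\frac14\Vert V_{2\lambda}\Vert_2^2$. Here $V_\mu=\tilde U-\sqrt\mu\,\xi$ is the level error when the parent is Poisson$(\mu)$. Iterate, and use the trivial bound $e(2^N\lambda)\le2^{N+1}\lambda$ so that $4^{-N}e(2^N\lambda)\to0$. This gives $e(\lambda)\le\frac13\sup_{\mu>0}\Vert V_\mu\Vert_2^2$ for every $\lambda$ at once. It is exactly the paper's expansion $\Pi(m)-m=\sum_{\ell\ge0}2^{-\ell-1}(2\Pi(2^\ell m)-\Pi(2^{\ell+1}m))$ with orthogonal level errors. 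Summability comes from the weights $2^{-\ell-1}$ and orthogonality, not from level errors decaying with size; those errors are $O(1)$.

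You then still need $\Vert V_\mu\Vert_2^2\le 2.811$ uniformly in $\mu$. Massart's bound $\frac32+Y^2/4$ alone cannot give this, since its second moment is $51/16$. The paper proceeds as follows:
it sharpens the Rademacher-sum bound to $33/16$ via $x^2\le\frac32|x|+|x|(|x|-\frac32)_+$ and Goldstein's $W_1\le 1$, and uses $2n(1-\sqrt{2/\pi})$ for small $n$;
it accounts for the level Gaussian having deterministic variance $\mu$ rather than $U$ (needed to keep the sum exactly Gaussian) through the bound $\BBE(\tilde U\tilde W)\ge(\mu-c_0)\BBE(\sqrt{U/\mu})$;
for $\mu\le 4/3$, it uses the nonnegative conditional covariance of the quantile coupling, which gives $2\mu\le 8/3$.
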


\subsection{Proof of Theorem \ref{Thmkappa}}

Let $\sigma^2 = \BBE (X^2) >0$. Note that, for any positive $a$, 
\beq \label{inepsi}
\psi_{ax} ( az ) = a^2  \psi_{x } (z) \, . 
\eeq
This leads to 
\beq \label{toassumesigma1}
\kappa_{\psi_x} ( P_{A} , P_{B} ) = a^2 \kappa_{\psi_{x/a}} ( P_{a^{-1}A} , P_{a^{-1}B}  ) \, .
\eeq
Therefore we shall prove the result in case $\sigma^2=1$ and use the above equality with $A=S_n$, $B= G_{n\sigma^2}$ and 
$a=\sigma$ to get the general case. So, from now, we assume that  $\sigma^2=1$. 

Let $\beta _3 = \BBE ( X^3)$. Assume first that $\beta _3 \neq 0$.

Let $(N_k)_{k \geq 1}$ be iid Gaussian random variables that are independent of $(X_k)_{k \geq 1}$, centered and with variance $1/2$. Let 
$(B_k)_{k\geq 1}$ be a sequence of iid random variables, independent of $(N_k,X_k)_{k \geq 1}$ and with the same law as 
\[
B = 2 \beta_3 \bigl ( \Pi \bigl (  1/(8 \beta_3^2) \bigr )   -   1/(8 \beta_3^2) \bigr ) \, , 
\]
where  $\Pi(\lambda)$ is a r.v. with Poisson distribution of parameter $\lambda$.  For any $k \geq 1$, let $Y_k = B_k + N_k$.  
The random variables $Y_k$ are iid  and their first moments satisfy
\beq \label{defmomentsY}
\BBE ( Y_1) =0 , \BBE ( Y_1^2) =1 ,  \BBE ( Y_1^3) = \beta_3 
\text{ 
and }
\BBE ( Y_1^4) = 3 + 2 \beta_3^2  . 
\eeq
We now define set $T_n = \sum_{k=1}^n Y_k$. Note now that $2 \psi_x$ belongs to $\Psi$. Hence, by Lemma \ref{Wphidistance},  $\sqrt{\kappa_{\psi_x} }$ is a distance  between probability laws, from which
\beq \label{ineconvexity}
\sqrt{\kappa_{\psi_{x}} ( P_{S_n} , P_{G_n }) } \leq \sqrt{  \kappa_{\psi_{x}} ( P_{S_n} , P_{T_n} )  }+ 
 \sqrt{  \kappa_{\psi_{x}} ( P_{T_n} , P_{G_{n} } )}  \, . 
\eeq
Next, using the fact that $\psi_x (t) \leq t^2/4$ and Proposition \ref{PropPoisson} with $\alpha = 2 \beta_3$ and $m=n/2$, we get 
\beq \label{boundfinal1}
\kappa_{\psi_x} ( P_{T_n} ,  P_{G_{n} }) \leq  \frac{1}{4} W_2^2  ( P_{T_n} ,  P_{G_{n} } ) \leq  \frac{1}{4} W_2^2 (  \mu_{n/2,  2 \beta_3} ,  P_{G_{n/2} }) \leq 0.937 \beta_3^2 \, .
\eeq
The above inequalities together with Proposition  \ref{linkkappawasserstein} imply that 
\beq \label{boundfinal2}
\sqrt{\kappa_{\psi_{x}} ( P_{S_n} , P_{G_n }) } \leq \sqrt{ \sup \bigl \{   \BBE ( f(S_n) - f(T_n) )  \, : \, f \in  {\mathcal F}_x \bigr \} } + 0.968 |\beta_3|.
\eeq
It remains to give an  upper bound on $\BBE ( f(S_n) - f(T_n) )$ for  $f$ in ${\mathcal F}_x$.
Let $v^2 >1/2$ to be chosen later  and $Z$ be a random variable with normal distribution ${\mathcal N} (0,v^2)$ such that $Z$ is independent of $(X_k,Y_k)_{k \in {\mathbb Z}}$. We first write
\beq \label{ine1}
\BBE ( f(S_n))-  f(T_n)) = \BBE ( f(S_n +Z)) - f(T_n + Z)) + R_{n,2} - R_{n,1} ,
\eeq
with  $R_{n,1} = \BBE ( f (S_n+Z) - f(S_n) )$  and $R_{n,2} = \BBE ( f (T_n+Z) - f(T_n) )$.
Now, since $Z$ is independent of $(S_n,T_n)$, $\BBE ( f' (S_n) Z ) = \BBE ( f' (T_n) Z ) = 0$. It follows that
$R_{n,1} = \BBE ( f (S_n+Z) - f(S_n) - f' (S_n) Z )$, which together with the fact that $f'$ is $1$-Lipschitz implies that 
$|R_{n,1}|  \leq \BBE ( Z^2/2) = v^2/2$. Similarly $|R_{n,2}|  \leq v^2/2$, whence
\beq \label{ineforRn}
|R_{n,2}- R_{n,1}| \leq  v^2 \, .
\eeq
From now on, let $\Delta_{n,v} (f) = \BBE ( f(S_n +Z)) - f(T_n + Z))$. Setting 
\[
f_{n-k}(x)  =  \BBE \Big (  f \big ( x + \sum_{u=k+1}^n Y_u + Z  \big )  \Big ) \, , 
\]
and taking into account the independence between the sequences, we get that
\begin{multline*}
 \Delta_{n,v} (f)  = \sum_{k=1}^n   \BBE ( f(S_{k-1} +X_k + T_n - T_k +Z)) -  f(S_{k-1} +Y_k + T_n - T_k +Z))     \\
 =  \sum_{k=1}^n   \Big (  \BBE ( f'_{n-k}(S_{k-1} ) (X_k -Y_k ))  + \frac{1}{2}  \BBE ( f''_{n-k}(S_{k-1} ) (X^2_k -Y^2_k ))  \Big )    \\
 +  \int_0^1 \frac{(1-s)^2}{2} \sum_{k=1}^n  \big \{  \BBE  ( f^{(3)}_{n-k} (S_{k-1} + s X_k )  X_k^3 ) -  \BBE  ( f^{(3)}_{n-k} (S_{k-1} + s Y_k )  Y_k^3 )     \big \}  ds \, .
\end{multline*}
By independence and since $ \BBE (X_k) =  \BBE (Y_k)$ and $ \BBE (X^2_k) =  \BBE (Y^2_k)$, it follows that 
\beq \label{ine2}
 \Delta_{n,v} (f) 
 =  \int_0^1 \frac{(1-s)^2}{2} \sum_{k=1}^n  \big \{  \BBE  ( f^{(3)}_{n-k} (S_{k-1} + s X_k )  X_k^3 ) -  \BBE  ( f^{(3)}_{n-k} (S_{k-1} + s Y_k )  Y_k^3 )    \big \}  ds \, .
\eeq
Let us first handle the quantity
\[
A (n):=  \int_0^1 \frac{(1-s)^2}{2} \sum_{k=1}^n  \big \{  \BBE  ( f^{(3)}_{n-k} (T_{k-1} + s X_k )  X_k^3 ) -  \BBE  ( f^{(3)}_{n-k} (T_{k-1} + s Y_k )  Y_k^3 )     \big \} ds \, .
\]
By independence and the fact that $\BBE ( X_k^3) = \BBE ( Y_k^3)= \beta_3$, we have
\[
\BBE  ( f^{(3)}_{n-k} (T_{k-1}  )  X_k^3 )   = \BBE  ( f^{(3)}_{n-k} (T_{k-1}  )   )  \beta_3    = \BBE  ( f^{(3)}_{n-k} (T_{k-1}  )  Y_k^3 )  \, .
\]
It follows that $A (n) = A_1(n) - A_2 (n)$,
where 
\[
A _1(n):=  \int_0^1 \frac{(1-s)^2}{2} \sum_{k=1}^n  \big \{  \BBE  ( f^{(3)}_{n-k} (T_{k-1} + s X_k )  X_k^3 ) -  \BBE  ( f^{(3)}_{n-k} (T_{k-1}  )  X_k^3 )     \big \}  ds
\]
\[
A _2(n):=  \int_0^1 \frac{(1-s)^2}{2} \sum_{k=1}^n  \big \{  \BBE  ( f^{(3)}_{n-k} (T_{k-1} + s Y_k )  Y_k^3 ) -  \BBE  ( f^{(3)}_{n-k} (T_{k-1}  )  Y_k^3 )     \big \}  ds .
\]
\par
Let $u_n := (n-1)/2 + v^2$. Since $v^2 \geq 1/2$, we have that $u_n \geq n/2$. 
For any $s \in [0,1]$, taking into account Lemma \ref{smoothlemma}, 
\begin{multline*}
\sum_{k=1}^n  \BBE   \Bigl (  \big |  \bigl( f^{(3)}_{n-k} (T_{k-1} + s X_k )  -  f^{(3)}_{n-k} (T_{k-1}  )  \bigr)  X_k^3 \big|     \Bigr ) \\
\leq n  \BBE  \Big ( |X|^3   \min \Big ( 2 \Vert   f^{(3)}_{n} \Vert_{\infty} , s |X|     \Vert   f^{(4)}_{n} \Vert_{\infty}  \Big )   \Big ) 
\leq n   c_2 \BBE  \Big ( |X|^3   \min \Big \{  \frac{2x}{u_n}     ,   |X|    \frac{s}{u_n}   \Big \}    \Big )  \, .
\end{multline*}
Since $(n/u_n) \leq 2$, it follows that 
\beq \label{BoundA1}
|A _1(n)| \leq  \frac{c_2}{12}   \BBE   ( |X|^3   ( |X|    \wedge 8x )   ) \, .
\eeq
On another hand, for any $s$ in $[0,1]$, taking into account Lemma \ref{smoothlemma}, 
\[
\sum_{k=1}^n  \BBE   \Bigl (  \big |  \bigl( f^{(3)}_{n-k} (T_{k-1} + s Y_k )  -  f^{(3)}_{n-k} (T_{k-1}  ) \bigr)Y_k^3  \big |   \Bigr ) 
\leq n s    \Vert   f^{(4)}_{n} \Vert_{\infty}   \BBE  (  Y^4    )  \leq c_2 \frac{n s }{u_n}    ( 3 + 2 \beta_3^2)   \, .
\]
Therefore
\beq \label{BoundA2}
|A _2(n)| \leq   \frac{c_2}{12} \times    ( 3 + 2 \beta_3^2) \, .
\eeq
Taking into account \eqref{BoundA1} and \eqref{BoundA2}, we finally get that
\beq \label{BoundAn}
|A (n)| \leq   \frac{c_2}{12}  \Big (3 + 2 \beta_3^2 +  \BBE   ( |X|^3   ( |X|    \wedge  8 x )   ) \Big )  \, .
\eeq
Next, we write 
\beq \label{decwithAn}
\Delta_{n,v} (f)  - A (n) =: B_1 (n) -  B_2 (n)   \, ,
\eeq
where
\[
B_1 (n) := \int_0^1 \frac{(1-s)^2}{2} \sum_{k=1}^n  \big \{  \BBE  ( f^{(3)}_{n-k} (S_{k-1} + s X_k )  X_k^3 ) -  \BBE  ( f^{(3)}_{n-k} (T_{k-1} + s X_k )  X_k^3 )     \big \} ds
\]
\[
B_2 (n) := \int_0^1 \frac{(1-s)^2}{2} \sum_{k=1}^n  \big \{  \BBE  ( f^{(3)}_{n-k} (S_{k-1} + s Y_k )  Y_k^3 ) -  \BBE  ( f^{(3)}_{n-k} (T_{k-1} + s Y_k )  Y_k^3 )     \big \}  ds \, .
\]
We first handle $B_1(n)$. For any integer $k $ in $[1,n]$ and any $s \in [0,1]$, 
\begin{multline*}
 \BBE   \Big ( X_k^3 \Big \{   f^{(3)}_{n-k} (S_{k-1} + s X_k )  -   ( f^{(3)}_{n-k} (T_{k-1} + s X_k )       \Big \}   \Big )   \\ =
  \sum_{\ell =1 }^{k-1}   \BBE   \Big ( X_k^3 \Big \{   f^{(3)}_{n-k} (S_{\ell-1}   + X_\ell + T_{k-1} - T_\ell+ s X_k )  -   f^{(3)}_{n-k} (S_{\ell-1}   + Y_\ell + T_{k-1} - T_\ell+ s X_k )       \Big \}   \Big ) \\
 =  \sum_{\ell =1 }^{k-1}   \BBE   \Big ( X_k^3 \Big \{   f^{(3)}_{n-\ell -1} (S_{\ell-1}   + X_\ell + s X_k )  -   f^{(3)}_{n-\ell -1} (S_{\ell-1}   + Y_\ell + s X_k )       \Big \}   \Big )  \, .
\end{multline*}
By independence, since $\BBE ( X_\ell) = \BBE ( Y_\ell)$ and $ \BBE ( X^2_\ell) = \BBE ( Y^2_\ell)$ for any  $\ell \leq k-1$,  
\[
 \BBE   \big ( X_k^3  (X_\ell - Y_{\ell} )  f^{(4)}_{n-\ell -1} (S_{\ell-1}   + s X_k )       \big ) = 0\, , \   \BBE   \big ( X_k^3  (X^2_\ell - Y^2_{\ell} )  f^{(5)}_{n-\ell -1} (S_{\ell-1}   + s X_k )       \big ) = 0 \, .
\]
Hence, by the Taylor integral formula, 
\begin{multline*} 
 \BBE   \Big ( X_k^3 \Big \{   f^{(3)}_{n-k} (S_{k-1} + s X_k )  -   ( f^{(3)}_{n-k} (T_{k-1} + s X_k )       \Big \}   \Big )  \\
  =    \int_0^1 \frac{(1-t)^2}{2}  \sum_{\ell =1 }^{k-1}   \BBE   \Big ( X_k^3  \Big \{  X_\ell^3  f^{(6)}_{n-\ell -1} (S_{\ell-1}   +  t X_\ell + s X_k )  -  Y_\ell^3  f^{(6)}_{n-\ell -1} (S_{\ell-1}   +  t Y_\ell + s X_k )       \Big \}   \Big )  dt  \, .
\end{multline*}
Since $\BBE ( X^3_\ell) = \BBE ( Y^3_\ell)$, it follows that 
\begin{multline} \label{Dec2}
 \BBE   \Big ( X_k^3 \Big \{   f^{(3)}_{n-k} (S_{k-1} + s X_k )  -   ( f^{(3)}_{n-k} (T_{k-1} + s X_k )       \Big \}   \Big )  \\
  =    \int_0^1 \frac{(1-t)^2}{2}  \sum_{\ell =1 }^{k-1}   ( C^X_ {n,k,\ell} (s,t) -  D^X_ {n,k,\ell} (s,t) )   dt  \, ,
\end{multline}
where
\[
C^X_ {n,k,\ell} (s,t)  =   \BBE   \Big ( X_k^3  X_\ell^3   \Big \{   f^{(6)}_{n-\ell -1} (S_{\ell-1}   +  t X_\ell + s X_k )  -  f^{(6)}_{n-\ell -1} (S_{\ell-1}   + s X_k ) \Big \} \Big ) ,
\]
\[
D^X_ {n,k,\ell} (s,t)  =   \BBE   \Big ( X_k^3  Y_\ell^3   \Big \{   f^{(6)}_{n-\ell -1} (S_{\ell-1}   +  t Y_\ell + s X_k )  -  f^{(6)}_{n-\ell -1} (S_{\ell-1}   + s X_k ) \Big \} \Big )  \, .
\]
For $\ell < k$, we have
\begin{multline*}
\big  \vert C^X_ {n,k,\ell} (s,t)  \big  \vert  \leq \BBE \Big ( |  X_k^3  X_\ell^3 |  \min \big ( 2 \Vert f^{(6)}_{n-\ell -1} \Vert_{\infty}  , t |X_{\ell}| \Vert f^{(7)}_{n-\ell -1} \Vert_{\infty} \big )  \Big )  \\
 \leq \BBE ( |X|^3)  \BBE \Big (     |X|^3 \min \big ( 2 \Vert f^{(6)}_{n-\ell -1} \Vert_{\infty}  , t |X| \Vert f^{(7)}_{n-\ell -1} \Vert_{\infty} \big )  \Big ) \, .
\end{multline*}
Using the notation
\beq \label{notationunl}
u_{n, \ell} =  (n-\ell -1)/2 + v^2 
\eeq
and  Lemma \ref{smoothlemma}, we derive
\begin{multline*}
   \int_0^1 \!\!\!  \int_0^1 \frac{(1-s)^2}{2} \frac{(1-t)^2}{2}   \sum_{k =2 }^{n}   \sum_{\ell =1 }^{k-1}  \big  \vert C^X_ {n,k,\ell} (s,t)  \big  \vert  ds dt  \\
 \leq \BBE ( |X|^3)    \int_0^1 \!\!\!  \int_0^1  \frac{(1-s)^2}{2}  \frac{(1-t)^2}{2}   \sum_{k =2 }^{n}   \sum_{\ell =1 }^{k-1}   \BBE \Big (    | X|^3 \min \Big (  \frac{2  c_5 x}{ u^{5/2}_{n, \ell} }  ,
  \frac{  c_5 t |X|}{ u^{5/2}_{n, \ell} } \big )  \Big )  ds dt \\
 \leq  \frac{ c_5}{ 144} \BBE ( |X|^3)   \BBE   ( |X|^3   ( |X|    \wedge 8 x )   )   \sum_{k =0 }^{n-2}   \sum_{\ell =k }^{n-2}   \frac{2^{5/2}}{ ( \ell + 2 v^2)^{5/2}}   \, .
\end{multline*}
Next, we note that, by convexity, the following upper bound holds: for any $a \geq 1/2$,  any $m \in {\mathbb N}$ and any $p >1$, 
\[
\sum_{k \geq m} ( k + a)^{-p} \leq \int_{m+a-1/2}^{+\infty} x^{-p} dx .
\] 
Applying twice the above inequality, we obtain that
\beq \label{trivial}
\sum_{k= 0 }^{n-2}   \sum_{\ell =k }^{n-2}   \frac{1}{ ( \ell + 2 v^2)^{5/2}}  \leq \frac{2}{3}  \int_{2v^2-1}^{+ \infty} \frac{1}{x^{3/2}} dx = \frac{4}{3}  (2v^2 -1)^{-1/2}\, .
\eeq
From the above considerations, it follows that    
\begin{equation}\label{boundtotalC}
   \int_0^1 \!\!\!  \int_0^1  \frac{(1-s)^2}{2} \frac{(1-t)^2}{2}   \sum_{k =2 }^{n}   \sum_{\ell =1 }^{k-1}  \big  \vert C^X_ {n,k,\ell} (s,t)  \big  \vert  ds dt   \leq  \frac{  \sqrt{2} c_5 \BBE ( |X^3|)}{27 \sqrt{2v^2-1}}   \BBE   ( |X|^3   ( |X|    \wedge  8 x )   )     \, .
\end{equation}
On another hand, taking into account Lemma \ref{smoothlemma} and the notation \eqref{notationunl}, we derive that 
\[
\big  \vert D^X_ {n,k,\ell} (s,t)  \big  \vert  
 \leq  t  c_5 \BBE ( |X|^3)  \BBE (     Y^4)    u^{-5/2}_{n, \ell}   \, .
\]
Therefore, taking into account the upper bound \eqref{trivial}, we get 
\beq \label{boundtotalD}
   \int_0^1 \!\!\!  \int_0^1  \frac{(1-s)^2}{2} \frac{(1-t)^2}{2}   \sum_{k =2 }^{n}   \sum_{\ell =1 }^{k-1}  \big  \vert D^X_ {n,k,\ell} (s,t)  \big  \vert  ds dt  \\
 \leq    \frac{  \sqrt{2} c_5 \BBE ( |X^3|)}{27 \sqrt{2v^2-1}}   ( 3 + 2 \beta_3^2)    \, . \eeq
Starting from \eqref{Dec2} and considering  \eqref{boundtotalC} and  \eqref{boundtotalD}, we derive that 
\beq \label{conclusionDec1}
\big |  B_1(n)\big | \leq    \frac{  \sqrt{2} c_5 \BBE ( |X^3|)}{27 \sqrt{2v^2-1}}   \Big (  3 + 2 \beta_3^2 +  \BBE   ( |X|^3   ( |X|    \wedge 8 x )   )  \Big )  \, .
 \eeq
To handle the quantity  $B_2(n)$ we proceed as above. We first write
\[
B_2(n) = \sum_{k=1}^n  \sum_{\ell =1 }^{k-1} \int_0^1\!\! \int_0^1 \frac{(1-s)^2}{2}  \frac{(1-t)^2}{2}   ( C^Y_ {n,k,\ell} (s,t) -  D^Y_ {n,k,\ell} (s,t) )   ds dt
  \, ,
\]
where
\[
C^Y_ {n,k,\ell} (s,t)  =   \BBE   \Big ( Y_k^3  X_\ell^3   \Big \{   f^{(6)}_{n-\ell -1} (S_{\ell-1}   +  t X_\ell + s Y_k )  -  f^{(6)}_{n-\ell -1} (S_{\ell-1}   + s Y_k ) \Big \} \Big ) ,
\]
\[
D^Y_ {n,k,\ell} (s,t)  =   \BBE   \Big ( Y_k^3  Y_\ell^3   \Big \{   f^{(6)}_{n-\ell -1} (S_{\ell-1}   +  t Y_\ell + s Y_k )  -  f^{(6)}_{n-\ell -1} (S_{\ell-1}   + s Y_k ) \Big \} \Big )  \, .
\]
Taking into account the previous computations, we infer that 
\[
\big  \vert C^Y_ {n,k,\ell} (s,t)  \big  \vert  \leq  \BBE ( |Y|^3)  \BBE \Big (     |X|^3 \min \big ( 2 \Vert f^{(6)}_{n-\ell -1} \Vert_{\infty}  , t |X| \Vert f^{(7)}_{n-\ell -1} \Vert_{\infty} \big )  \Big ) \, .
\]
Therefore
\[
   \int_0^1 \!\!\!  \int_0^1  \frac{(1-s)^2}{2} \frac{(1-t)^2}{2}   \sum_{k =2 }^{n}   \sum_{\ell =1 }^{k-1}  \big  \vert C^Y_ {n,k,\ell} (s,t)  \big  \vert  ds dt    \leq  \frac{\sqrt{2} c_5 \BBE ( |Y|^3)}{27 \sqrt{2v^2-1}}    \BBE   ( |X|^3   ( |X|    \wedge  8x )   )     \, .
\]
On another hand
\[
\big  \vert D^Y_ {n,k,\ell} (s,t)  \big  \vert  \leq    t  c_5 \BBE ( |Y|^3)  \BBE (     Y^4)    u^{-5/2}_{n, \ell} \, ,
\]
implying that  
\[
 \int_0^1 \!\!\!  \int_0^1  \frac{(1-s)^2}{2} \frac{(1-t)^2}{2}   \sum_{k =2 }^{n}   \sum_{\ell =1 }^{k-1}  \big  \vert D^Y_ {n,k,\ell} (s,t)  \big  \vert  ds dt  \\  \leq    \frac{\sqrt{2} c_5 \BBE ( |Y|^3)}{27 \sqrt{2v^2-1}}   ( 3 + 2 \beta_3^2)   \, .
\]
So, overall, 
\beq \label{conclusionDec1bis}
\big |  B_2(n)\big | \leq    \frac{\sqrt{2} c_5 \BBE ( |Y|^3)}{27 \sqrt{2v^2-1}}    \Big (  3 + 2 \beta_3^2 +   \BBE   ( |X|^3   ( |X|    \wedge  8 x )   )  \Big )  \, .
 \eeq
Starting from  \eqref{decwithAn} and considering the upper bounds \eqref{BoundAn}, \eqref{conclusionDec1} and \eqref{conclusionDec1bis}, it follows that 
\beq  \label{boundofzeta}
 | \Delta_{n,v} (f) |  \leq   \Big (  \frac{\sqrt{2} c_5 \BBE ( |X|^3 + |Y|^3)}{27 \sqrt{2v^2-1}}    +   \frac{c_2}{12}   \Big )   \Big (  3 + 2 \beta_3^2   +  \BBE   ( |X|^3   ( |X|    \wedge 8 x )   )  \Big )    \, .
\eeq
In order to minimize the numerical constants, we now set 
\beq \label{choicev2}
v^2 = 0.50 + 0.15 \bigl( \BBE ( |X|^3 ) + \BBE ( |Y|^3 ) \bigr)^2 . 
\eeq
This choice, combined with \eqref{ine1}, \eqref{ineforRn}, \eqref{boundofzeta} leads to  the following proposition: 
\begin{Proposition} \label{boundfinal0}
Let 
\[
B(n,x) : =  0.50 + 0.15 \bigl( \BBE ( |X|^3 ) + \BBE ( |Y|^3 ) \bigr)^2 
+  \alpha_1  \bigl (  3 + 2 \beta_3^2   +   \BBE   ( |X|^3   ( |X|    \wedge 8 x )   )  \bigr )    \, ,
\]
with $\alpha_1 =   \sqrt{20/3} (c_5/27)  +   (c_2/12)   \leq \gamma_2 := 0.64584$. Then 
\[
  \sup_{f \in  {\mathcal F}_x}\big | \BBE ( f(S_n ) ) -  f(T_n))  \big |  \leq   B(n,x) \, . 
\]
\end{Proposition}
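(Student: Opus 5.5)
The proposition is essentially the assembly of the bounds already derived. Fix $f\in{\mathcal F}_x$. I would start from the decomposition \eqref{ine1}, $\BBE(f(S_n))-\BBE(f(T_n)) = \Delta_{n,v}(f) + R_{n,2}-R_{n,1}$. By \eqref{ineforRn} the remainder contributes at most $v^2$. By \eqref{boundofzeta},
\[
|\Delta_{n,v}(f)| \leq \Big( \frac{\sqrt{2}\, c_5\, \BBE(|X|^3+|Y|^3)}{27\sqrt{2v^2-1}} + \frac{c_2}{12}\Big)\bigl(3+2\beta_3^2+\BBE(|X|^3(|X|\wedge 8x))\bigr).
\]
Both bounds hold for every admissible $v^2>1/2$, uniformly in $f\in{\mathcal F}_x$ and in $n$. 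Hence the supremum over $f$ is at most $v^2$ plus the right-hand side above, for any such $v$.

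Next I would plug in the choice \eqref{choicev2}, $v^2=0.5+0.15\,\Lambda^2$ with $\Lambda=\BBE(|X|^3)+\BBE(|Y|^3)$. First, $v^2$ is exactly the first two terms of $B(n,x)$. Second, $2v^2-1=0.3\,\Lambda^2$, so $\sqrt{2v^2-1}=\sqrt{0.3}\,\Lambda$. The $\Lambda$ then cancels in the first coefficient:
\[
\frac{\sqrt2\, c_5\,\Lambda}{27\sqrt{0.3}\,\Lambda}=\frac{c_5}{27}\sqrt{2/0.3}=\sqrt{20/3}\,\frac{c_5}{27}.
\]
Adding $c_2/12$ gives exactly $\alpha_1$, so the total bound is $B(n,x)$.

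Here $\Lambda>0$ automatically, since $\BBE|X|^3\geq\sigma^3=1$, so $v^2>1/2$ as required in the derivation of \eqref{trivial} and \eqref{conclusionDec1}. Strictly, \eqref{trivial} needs $2v^2-1>0$, and the bound $u_n\ge n/2$ only needs $v^2\ge1/2$; both hold.

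The only remaining point is the numerical inequality $\alpha_1\le 0.64584$. This uses $c_5\le5.9101$ and $c_2\le0.9679$:
\[
\sqrt{20/3}\approx2.58199,\qquad 2.58199\cdot5.9101/27\approx0.56518,\qquad 0.9679/12\approx0.08066,
\]
and the sum is about $0.64584$. This is the step I expect to be delicate, since the rounding is tight. I would recompute $c_5$ and $c_2$ to more digits from the closed forms given after Lemma \ref{smoothlemma}, and check that the sum does not exceed $0.64584$.

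Finally, the bound holds for $|\BBE f(S_n)-\BBE f(T_n)|$, because the estimates control absolute values. Taking the supremum over $f\in{\mathcal F}_x$ concludes the proof.

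The case $\beta_3=0$, where $B$ is degenerate and $Y_k=N_k$ should then be taken Gaussian with variance $1$, is handled identically with $\beta_3=0$ in all formulas. The moments \eqref{defmomentsY} still hold with $\BBE Y^4=3$.
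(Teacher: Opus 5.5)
Your proposal is correct and is exactly the paper's argument: combine \eqref{ine1}, \eqref{ineforRn} and \eqref{boundofzeta}, then insert the choice \eqref{choicev2}. With that choice, $v^2$ supplies the first two terms of $B(n,x)$, and $\sqrt{2v^2-1}=\sqrt{0.3}\,(\BBE|X|^3+\BBE|Y|^3)$ cancels to give $\alpha_1$. The numerical step you flagged also goes through: the closed forms give $\alpha_1\approx 0.645834\le 0.64584$.
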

In order to complete the proof of Theorem \ref{Thmkappa}, we now give an upper bound on $B (n,x)$ depending more explicitely on the law of $X$.
First 
$$
\bigl( \BBE ( |X|^3 ) + \BBE ( |Y|^3 ) \bigr)^2 \leq 4 (\BBE ( |X|^3 ) )^2 + (4/3) (\BBE ( |Y|^3 ) )^2 ,
$$
and second, since $ \BBE( Y^2) =1$, 
$(\BBE ( |Y|^3 ) )^2 \leq \BBE( Y^4) = 3 + 2 \beta_3^2$.
Hence 
\beq \label{UpperboundB(n,x)}
B (n,x) \leq 1.1 + 0.6 ( \BBE ( |X|^3 ))^2 + 0.4 \beta_3^2 +  \alpha_1  \bigl (  3 + 2 \beta_3^2   +   \BBE   ( |X|^3   ( |X|    \wedge 8 x )   )  \bigr )    \, .
\eeq

When $\sigma^2 =1$ and $\beta_3 \neq 0$, Theorem \ref{Thmkappa} follows by considering \eqref{ineconvexity} together with Proposition \ref{boundfinal0}, \eqref{UpperboundB(n,x)} and inequality \eqref{boundfinal2}. 

Assume now that $\beta_3=0$. In this case, first we add an independent Gaussian r.v. $Z$ centered and with variance $v^2$, and then we take  $(Y_k)_{k \geq 1}$ iid standard Gaussian random variables that are independent of $(X_k)_{k \geq 1}$.   We proceed as for the proof of the case $\beta_3 \neq 0$, but with $u_n = n-1 + v^2$ and $u_{n, \ell} =  n-\ell -1 + v^2 $.   So, overall, we infer that when $\sigma^2=1$ and $\beta_3=0$, 
\[
\kappa_{\psi_{x}} ( P_{S_n} ,  P_{G_{n} } )  \leq   {\tilde \gamma}_1  +  0.6 (\BBE (|X^3|))^2      + {\tilde \gamma}_2  \BBE   ( |X|^3   ( |X|    \wedge 8 x )   )  \, ,
\]
where ${\tilde \gamma}_1  \leq \gamma_1$ and  ${\tilde \gamma}_2  \leq \gamma_2$. This ends the proof of the theorem. \qed

\subsection{Proof of Remark \ref{remark2}} \label{demrem22}
Since $\varphi'$ is concave and then absolutely continuous, there exists a  positive measure $\nu$ on ${\mathbb R}_+$ such that 
$\varphi''(0)  -  \varphi''(z) =  \nu (]0,z[)$ almost surely.
Since $\varphi''(0)=1 $ and $\varphi'(0)=0$, 
$x - \varphi'(x)  =  \int_0^x \nu(]0,t[) dt$.
It follows that 
\[
1 - \lim_{x \rightarrow \infty}  x^{-1} \varphi'(x)=   \lim_{x \rightarrow \infty}  x^{-1} \int_0^x \nu(]0,t[) dt \, .
\]
Since  $\lim_{x \rightarrow \infty} x^{-1} \varphi'(x) =0$, it follows that $\nu$ is a probability measure. 
Hence
\beq \label{varphiwithpsiprime}
\varphi'(x)  =  \int_0^x \nu ([y, + \infty [ ) dy =  \int_0^{\infty} (x \wedge y) d \nu (y) =   \frac{1}{2} \int_0^{\infty}  \psi'_y (2x) d \nu (y) \, ,
\eeq
where $\psi_y$ is defined in \eqref{defpsi}.  
Since $\varphi' (0) = 0$, it implies that 
\[ \varphi(z) = \frac{1}{2} \int_{ {\mathbb R}_+} \psi_t( 2 z ) d \nu (t) = 2 \int_{ {\mathbb R}_+} \psi_{t/2}(  z ) d \nu (t)  \, , 
\] 
where the second equality comes from \eqref{inepsi}. \qed

\subsection{Proof of Theorem \ref{coravecphi}}  By Lemma \ref{Wphidistance},  
\beq \label{Th2-1}
W_{\varphi}( P_{S_n} , P_{G_{n \sigma^2 } }  ) \leq  W_{\varphi}( P_{S_n} , P_{\sigma T_n} )  +  W_{\varphi}( P_{ \sigma T_n} , P_{G_{n\sigma^2} } )  \, , 
\eeq
where $T_n$ is defined as in the proof of Theorem \ref{Thmkappa} but with $\beta_3 = \BBE (X^3) / \sigma^3 $ in \eqref{defmomentsY}.   

Since $\varphi$ is in $\Psi$, $\varphi(x) \leq x^2 /2$. Whence, taking into account \eqref{boundfinal1}, 
\beq \label{Th2-2}
W^2_{\varphi}(  P_{ \sigma T_n} , P_{G_{n\sigma^2} } ) 
 \leq \frac{\sigma^2}{2} W^2_{2}(  P_{  T_n} , P_{G_n } )  \leq  1.874  \,   \frac{\mu_3^2}{\sigma^4} \leq \bigl( 1.369 \sigma^{-2} \mu_3 \bigr)^2 \, .
\eeq
We now handle the quantity $W_{\varphi}( P_{S_n} , P_{\sigma T_n} ) $.  Taking into account Remark \ref{remark2}, 
\[
\kappa_{\varphi}( P_{ S_n} , P_{\sigma T_n} )  = 2  \int_{ {\mathbb R}_+}   \kappa_{  \psi_{t/2}  } \big (  P_{ S_n} , P_{  \sigma T_n }  \big ) d \nu (t) \, .
\]
Applying inequality \eqref{toassumesigma1} with $a = \sigma$, we get that
\[
\kappa_{  \psi_{t/2}  }( P_{S_n} , P_{\sigma T_n} ) = \sigma^2  \kappa_{ \psi_{t/(2 \sigma)}} ( P_{ \sigma^{-1}S_n} , P_{ T_n} ) \, .
\]
Hence
\[
\kappa_{\varphi}( P_{ \sigma^{-1}S_n} , P_{\sigma T_n} )  = 2  \sigma^2 \int_{ {\mathbb R}_+}   \kappa_{  \psi_{t/(2 \sigma)}  } \big (  P_{  \sigma^{-1}S_n} , P_{  T_n }  \big ) d \nu (t) \, .
\]
Now, using Proposition \ref{boundfinal0} and \eqref{UpperboundB(n,x)}, 
\beq \label{Th2-3}
\kappa_{\varphi}( P_{S_n} , P_{\sigma T_n} ) \leq  2 C_0 (X) +   8 \gamma_2 \sigma^{-2} \int_{ {\mathbb R}_+}  \BBE  \big ( |X|^3 \min (  |X|/4 ,  t  ) \big )  d \nu (t)  \, . 
\eeq
Recall that   $ \varphi'(|z|) =\int_{ {\mathbb R}_+}  ( |z| \wedge t) d \nu (t)$ (see \eqref{varphiwithpsiprime}). Hence
\beq \label{Th2-4}
\int_{ {\mathbb R}_+}  \BBE  \big ( |X|^3 \min ( |X| /4 , t ) \big )  d \nu (t) \leq   \BBE  \big ( |X|^3 \varphi' (  |X| /4  ) \big ) \, .
\eeq
Starting from \eqref{Th2-1}, the theorem follows by taking into account \eqref{Th2-2}, \eqref{Th2-3} and \eqref{Th2-4}. 
 \qed

\subsection{Proof of Remark \ref{remTh22}} \label{section3.5}
As quoted in the proof  of Lemma \ref{Wphidistance}, 
$x \mapsto x^{-2} \varphi (x) $ is nonincreasing on $]0, + \infty [$. Hence the derivative of $x \mapsto x^{-2} \varphi (x) $ is nonpositive which ensures that 
$\varphi'(x) \leq   2 x^{-1} \varphi (x)  $.  Therefrom
\[
\BBE  \big ( |X|^3 \varphi' (  |X| /4  ) \big )\leq  8  \BBE  \big ( |X|^2 \varphi (  |X|  /4 ) \big ) \, . \quad \quad  \qed
\]

\subsection{Proof of Proposition \ref{coravecxp}}
We first notice that 
\beq \label{Wpdistance}
W_p ( P_{S_n} , P_{G_{n\sigma^2}} ) = \sigma W_p ( P_{S_n/\sigma} , P_{G_{n}} ) \leq  \sigma W_p ( P_{S_n/\sigma} , P_{T_n} ) + \sigma  W_p  ( P_{T_n} , P_{G_n} ) \, , 
\eeq
where $T_n$ has been defined in the proof of Theorem \ref{Thmkappa}. Using the fact that $p \in ]1,2[$ and the upper bound \eqref{boundfinal1}, we get 
\beq \label{WpTnGn}
\sigma W_p  ( P_{T_n} , P_{G_{n}} ) \leq \sigma  W_2  ( P_{T_n} , P_{G_{n}} ) \leq   1.936  \sigma^{-2} | \mu_3 |  .
\eeq
In order to bound up the first term on right hand in \eqref{Wpdistance}, we define the rescaled functions $g_{p,a}$ from the function $g_p$
defined in \eqref{defgp}  by 
$g_{p,a} (x) = a^2 g_p (x/a)$ for any $a>0$. Note that the functions $g_{p,a}$ are in the class $\Psi$. In addition, using inequality \eqref{comparisongpxp}, we get that 
\beq
\label{compxpgpa}
x^p \leq a^{p-2} p g_{p,a} (x) + a^p ( 1 -p/2 ).
\eeq
Taking into account the upper bounds \eqref{Th2-3} and \eqref{Th2-4}, it follows that 
\beq 
\kappa_p ( P_{S_n/\sigma} , P_{T_n} )  \leq ( 1-p/2) a^p + 2p a^{p-2} C_0 \Bigl( \frac {X}{ \sigma } \Bigr) + 8 \gamma_2 p a^{p-2} 
\BBE \Bigl( \Big| \frac{X}{\sigma} \Big|^3 g'_{p,a} \Bigl( \frac{ |X| }{4\sigma} \Bigr) \Bigr)  .
\eeq
Next,  since $g'_p (z) = \min ( z, z^{p-1}) \leq z^{p-1}$ for any $z\geq 0$, we have that 
\[
g'_{p,a} (z) = a g'_p (z/a) \leq a^{2-p} z^{p-1}
\]
 for any $z\geq 0$.
Hence 
\beq \label{Prop2.2-1}
\kappa_p ( P_{S_n/\sigma} , P_{T_n} )  \leq ( 1-p/2) a^p + 2p a^{p-2} C_0 (X/\sigma) + 8 p 4^{1-p} \gamma_2
\BBE \bigl( |X/\sigma|^{2+p} \bigr) .
\eeq
In order to minimize the above upper bound, we then choose $a = \sqrt{4 C_0 (X/\sigma)}$, which gives 
\beq \label{Prop2.2-2}
\kappa_p ( P_{S_n/\sigma} , P_{T_n} ) \leq  2^p (C_0 (X/\sigma) )^{p/2} + p 2^{5-2p} \gamma_2 \BBE \bigl( |X/\sigma|^{2+p} \bigr) .
\eeq
Starting from \eqref{Wpdistance} and considering the upper bounds \eqref{WpTnGn} and  \eqref{Prop2.2-2},  the proposition follows  by noticing that $\sigma^2 C_0 (X/\sigma) = C_0 (X)$.  \qed

\subsection{Proof of Corollaries \ref{asymptoticconstants} and \ref{asymptotickappa1g}}

In this subsection, we prove Corollary \ref{asymptoticconstants} and item (a) of Corollary \ref{asymptotickappa1g}. 
The proof of item (b) of Corollary \ref{asymptotickappa1g}, being similar, will be omitted. 
We start by proving the following lemma.

\begin{Lemma} \label{lmaUI} Let
$(Z_n)_{n \geq 1}$ be defined in \eqref{defZn}. Under the conditions of Corollary \ref{asymptoticconstants}, $ (\varphi (Z_n) )_{n \geq 1}$ is uniformly integrable. 
\end{Lemma}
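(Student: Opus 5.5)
The plan is a de la Vallée-Poussin argument. I would build a function $\tilde\varphi$ in the class $\Psi$ with two properties: $\tilde\varphi(z)/\varphi(z)\to\infty$ as $z\to\infty$, and $\BBE\bigl(|X|^2\tilde\varphi(|X|)\bigr)<\infty$. Then I would apply Theorem \ref{coravecphi} to $\tilde\varphi$. By \eqref{expressionkappaphi}, $\kappa_{\tilde\varphi}(P_{S_n},P_{G_n})=\BBE(\tilde\varphi(Z_n))$. Theorem \ref{coravecphi} (with $\sigma=1$), combined with the bound of Remark \ref{remTh22} and the fact that $\tilde\varphi$ is nondecreasing on $\BBR_+$, gives
\[
\sup_{n\geq 1}\BBE\bigl(\tilde\varphi(Z_n)\bigr)\leq \Bigl(\sqrt{2C_0(X)+64\gamma_2\BBE\bigl(X^2\tilde\varphi(|X|)\bigr)}+1.369|\mu_3|\Bigr)^2<\infty .
\]
Uniform integrability of $(\varphi(Z_n))_n$ then follows from the standard criterion. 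Given $\varepsilon>0$, choose $M$ with $\varphi\leq\varepsilon\tilde\varphi$ on $[M,\infty[$; on $\{Z_n\geq M\}$ we get $\varphi(Z_n)\leq \varepsilon\tilde\varphi(Z_n)$, while $\varphi(Z_n)\leq\varphi(M)$ on the complement.

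To construct $\tilde\varphi$ I would use the representation of Remark \ref{remark2}. Every function in $\Psi$ has the form
\[
\varphi(x)=\int_0^{|x|}\!\!\int_0^y r(z)\,dz\,dy ,
\]
where $r(z)=\nu([z,\infty[)$ is nonincreasing, $r(0)=1$ and $r(z)\to 0$. Conversely, any such $r$ produces an element of $\Psi$: $\varphi'$ is concave because $r$ is nonincreasing, $\varphi''(0)=1$, and $\varphi'(x)/x\to0$ because $r\to0$. So it suffices to find a nonincreasing $\tilde r$ with $\tilde r(0)=1$, $\tilde r\to0$, $\tilde r\geq r$ and $\tilde r/r\to\infty$, and whose double integral $\tilde\varphi$ still satisfies the moment condition. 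The natural attempt is $\tilde r=\min(1,r\,g)$ with $g\geq1$ nondecreasing and slowly tending to infinity. Since $r\,g$ need not be nonincreasing, I would replace it by its nonincreasing upper envelope $\tilde r(y)=\sup_{z\geq y}\min(1,r(z)g(z))$. The rate of $g$ would be chosen through the de la Vallée-Poussin lemma applied to the finite quantity $\BBE(X^2\varphi(|X|))$.

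The main obstacle is making this construction compatible with all constraints at once. In particular, when $\nu$ has bounded support, $r$ vanishes eventually and $\varphi$ grows only linearly, so that the hypothesis reduces to $\BBE|X|^3<\infty$. In that case $r\,g$ is useless. I would first try $\tilde r=\max(r,\rho)$, where $\rho$ is a nonincreasing function tending to $0$, chosen so slowly that $\int_0^x\!\int_0^y\rho\,/\,x\to\infty$ but fast enough that $\BBE\bigl(X^2\int_0^{|X|}\!\int_0^y\rho\bigr)<\infty$. Such a $\rho$ exists by de la Vallée-Poussin applied to the integrable variable $X^2\varphi(|X|)$, and it yields both the growth of $\tilde\varphi/\varphi$ and the finiteness of the moment. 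In the general case, $\max(r,\rho)$ combined with the envelope of $r\,g$ should handle every $\nu$.
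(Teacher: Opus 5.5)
\textbf{There is a genuine gap, at the one step that carries all the difficulty.} Your reduction itself is sound. Given $\tilde\varphi\in\Psi$ with $\tilde\varphi/\varphi\to\infty$ and $\BBE(X^2\tilde\varphi(|X|))<\infty$, Theorem \ref{coravecphi} and Remark \ref{remTh22} give $\sup_n\BBE(\tilde\varphi(Z_n))<\infty$ as you write, and uniform integrability follows. The problem is that the existence of such a $\tilde\varphi$ is the whole difficulty: concavity of $\tilde\varphi'$ forbids simply multiplying $\varphi$ by a slowly growing weight. Your sketch does not establish it, and as stated it can fail.

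Suppose $g$ is chosen from the de la Vallée-Poussin rate alone. Then the envelope $\tilde r(y)=\sup_{z\geq y}\min(1,r(z)g(z))$ is not controlled by $rg$. If $r$ is constant on long stretches $[x_k,x_{k+1})$ (e.g.\ $\nu$ purely atomic), then $\tilde r\geq\min\bigl(1,r(x_k)g(x_{k+1}^-)\bigr)$ on all of $[0,x_{k+1})$. If $g(x_{k+1}^-)\geq 1/r(x_k)$ for infinitely many $k$, you get $\tilde r\equiv 1$, i.e.\ $\tilde\varphi(x)=x^2/2\notin\Psi$.

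Moreover, $rg$ with slowly growing $g$ fails not only for bounded support but whenever $\varphi'$ is bounded and $r$ decays fast. For $r(z)=e^{-z}$ and $g(z)=\log(e+z)$, $rg$ is nonincreasing and integrable, so $\tilde\varphi/\varphi$ stays bounded. The relevant dichotomy is $\int_0^\infty r<\infty$ versus $\int_0^\infty r=\infty$, not bounded versus unbounded support. Your last sentence, ``should handle every $\nu$'', is exactly the unproved step.

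\textbf{A repair that keeps your route.} Tie the weight to $\varphi$ rather than to $z$. Let $s(x)=x/\varphi'(x)$, which is nondecreasing with $s(0^+)=1$ and $s\to\infty$. Set $\tilde\varphi'(x)=\varphi'(x)\,\gamma(s(x))$ for $x>0$. Here $\gamma$ is concave and nondecreasing, with $\gamma(1)=1$, $\gamma'\leq 1$ and $\gamma(t)/t\to 0$. It tends to infinity slowly enough that $\BBE\bigl(X^2\varphi(|X|)\gamma(s(|X|))\bigr)<\infty$; this is possible by de la Vallée-Poussin, since $s$ is nondecreasing and unbounded. The map $(u,v)\mapsto u\gamma(v/u)$ is concave and nondecreasing in each variable on $\{v\geq u>0\}$, so $\tilde\varphi'$ is concave. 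One then checks that $\tilde\varphi\in\Psi$, that $\tilde\varphi\leq\gamma(s)\varphi$, and that $\tilde\varphi/\varphi\to\infty$, in both the bounded and the unbounded $\varphi'$ cases.

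\textbf{How the paper avoids the construction.} The paper builds no single function. It applies Theorem \ref{coravecphi} to the family $\psi_M$ defined by $\psi_M'(x)=\min\bigl(x,\tfrac{M}{\varphi'(M)}\varphi'(x)\bigr)$, and uses $\varphi(x)\mathbf{1}_{x\geq M}\leq 2\tfrac{\varphi'(M)}{M}\psi_M(x)$. It then sends $\tfrac{\varphi'(M)}{M}\BBE\bigl(|X|^3\psi_M'(|X|)\bigr)\to 0$ by dominated convergence, with domination by $|X|^3\varphi'(|X|)$. This needs nothing beyond the fact that $\varphi'(x)/x$ is nonincreasing.
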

\noindent \textbf{Proof of Lemma \ref{lmaUI}.} We have to prove that 
\beq \label{butUI}
\lim_{M \rightarrow \infty} \sup_{n \geq 1} \BBE  (\varphi (Z_n) {\bf 1}_{  Z_n \geq M } )  =0 \, .
\eeq
Let $M >0$ and define the function $\psi_M$ on ${\mathbb R}$ by the following conditions: $\psi_M$ is even, $\psi_M (0)  =0$ and its derivative on ${\mathbb R}_+$ is defined by  
\[
\psi'_M (x) =  \left\{
\begin{aligned}
& x & \text{ if $ x \leq M$,}  \\  
&  \frac{M}{ \varphi' (M)}  \varphi' (x) & \text{ if $x \geq M$.} \\
  \end{aligned}
\right.
\]
Clearly  $\psi_M$ belongs to the class $\Psi$. Now applying Theorem \ref{coravecphi} to the function $\psi_M$, it follows that there exists a universal 
constant  $C$ such that for any $n \geq 1$,
\beq \label{UIp1}
\BBE  (\psi_M (Z_n)  )  \leq   C \Big (  \BBE ( |X|^3)     +   \BBE  \big ( |X|^3 \psi_M' (  |X|   ) \big )  \Big ) \, .
\eeq
On another hand, since $\varphi$ is convex and $\varphi(0)=0$, $\varphi(x) \leq x \varphi' (x)$ for $ x \geq 0$. This implies that  
\[
\BBE  (\varphi (Z_n) {\bf 1}_{  Z_n \geq M } ) \leq  \BBE  ( Z_n \varphi' (Z_n) {\bf 1}_{  Z_n \geq M } ) \leq  \frac{ \varphi' (M)}{M}  \BBE  (  Z_n \psi_M' (Z_n)  ) \, .
\]
Now, as quoted in Section \ref{section3.5}, since  $\psi_M$ belongs to the class $\Psi$, $ x \psi_M' (x) \leq 2  \psi_M (x) $. Hence
\beq \label{UIp2}
\BBE  (\varphi (Z_n) {\bf 1}_{  Z_n \geq M } ) \leq    2  \frac{ \varphi' (M)}{M}  \BBE  (   \psi_M (Z_n)  )   \, .
\eeq
Combining \eqref{UIp1}  and \eqref{UIp2}, it follows that 
\beq \label{UIp3}
\BBE  (\varphi (Z_n) {\bf 1}_{  Z_n \geq M } ) \leq    2 C \Big (   \frac{ \varphi' (M)}{M}    \BBE ( |X|^3)   +  \frac{ \varphi' (M)}{M}    \BBE  \big ( |X|^3 \psi_M' (  |X|   ) \big )   \Big )   \, .
\eeq
Note that 
\[
 \frac{ \varphi' (M)}{M}    |X|^3 \psi_M' (  |X|   )  =  \frac{ \varphi' (M)}{M}    |X|^4  {\bf 1}_{  | X| \leq M }  + 
  |X|^3 \varphi' ( |X| )   {\bf 1}_{  | X|  > M }    \, .
\]
Now, since $ x \mapsto x^{-1} \varphi' (x)   $ is nonincreasing, 
\[
 \frac{ \varphi' (M)}{M}    |X|^4  {\bf 1}_{  | X| \leq M } +    |X|^3 \varphi' ( |X| )   {\bf 1}_{  | X|  > M }    \leq     |X|^3 \varphi' ( |X| )   \, .
\]
Next, since $  \lim_{M \rightarrow + \infty} M^{-1}  \varphi' (M) =0$, the term of the left hand side of the above inequality tends to $0$ as $M \rightarrow + \infty$. Hence, since 
$\BBE ( |X|^3 \varphi' ( |X| )  \big ) < \infty$, by the dominated convergence theorem, we get that 
\beq \label{UIp4}
 \lim_{M \rightarrow + \infty}    \frac{ \varphi' (M)}{M}    \BBE  \big ( |X|^3 \psi_M' (  |X|   ) \big )  = 0 \, .
\eeq
Starting from \eqref{UIp3} and considering \eqref{UIp4}, the convergence \eqref{butUI} follows. \qed

\medskip\no
{\bf Proof of Item (a) of Corollary \ref{asymptoticconstants}}. Let $G = n^{-1/2}G_{n}$ and $U = \Phi ( G)$ where $\Phi$ is the c.d.f. of a standard normal.  
Then $U$ has the uniform distribution over $[0,1]$ and 
\[
Z_n =  | F_n^{-1} (U) - \sqrt{n} \Phi^{-1} (U) | \, .
\]
Set
\beq \label{defofZ}
Z  = \big |   \frac{\mu_3}{6}  (  ( \Phi^{-1} (U) )^2 -1 ) \big | \, .
\eeq
According to the Cornish-Fisher expansion (see for instance Lemma 2.1 (a) in Rio \cite{Rio11}  or  \cite[Theorem 2.4]{Hall92} and final comments to Chapter 2), we get that 
\[
Z_n \rightarrow^{\mathcal L} Z , \text{ as } n \rightarrow \infty \, ,
\]
 where $\rightarrow^{\mathcal L}$ means convergence in distribution. Since $\varphi $ is a continuous function, by the continuous mapping theorem, we also have that $\varphi(Z_n) \rightarrow^{\mathcal L} \varphi(Z) $, as $ n \rightarrow \infty $. 
Together with  Lemma \ref{lmaUI}  and the convergence of moments Theorem (see Theorem 3.5 in \cite{Bi99}), this convergence in distribution implies that 
$\BBE ( \varphi(Z_n) ) \rightarrow \BBE ( \varphi(Z) ) $, as $ n \rightarrow \infty $. This ends the proof of Item (a) of Corollary \ref{asymptoticconstants}. 



\medskip\no
{\bf Proof of Item (b) of Corollary \ref{asymptoticconstants}}. Let
\[
 {\bar Z}_n =  |  {\bar F}^{-1}_n  (U)   -  \sqrt{n} \Phi^{-1} (U) | 
\]
where ${\bar F}_n$ is the cdf of ${\bar S}_n= S_n +hV$.   Recalling the notation \eqref{defofZ}, we have ${\bar Z}_n \rightarrow^{\mathcal L} Z $, as  $n \rightarrow \infty$ (see for instance Lemma 2.1 (b) in Rio \cite{Rio11}). 
Whence, following the proof of  Item (a), it is enough to prove the uniform integrability of  
$(\varphi ( {\bar Z}_n)  )_{n \geq 1}$.  With this aim, we first notice that  for any $n \geq 1$, 
\[
 {\bar Z}_n  \leq   Z_n  + | {\bar F}_n^{-1} (U)  -   { F}_n^{-1} (U)  |  \, .
\]
 Next,  
since $V$ is a random variable with uniform distribution over  $[-1/2 , 1/2]$ and $\bar S_n = S_n + hV$, 
\[
S_n - h/2 \leq \bar S_n \leq S_n + h/2 \  \text{ almost surely, }
\]
which implies that 
$F_n^{-1} (u) - (h/2) \leq \bar F_n^{-1} (u) \leq F_n^{-1} (u) + (h/2)$
for any $u$ in $]0,1[$. Hence 
$| {\bar F}_n^{-1} (U)  -   { F}_n^{-1} (U) | \leq h/2$  almost surely,
which ensures that 
\beq \label{diffZbarZ}
\bar Z_n \leq Z_n +  h/2 \ \text{ almost surely.}
\eeq
Now, using both the above inequality and \eqref{subadvarphi}, 
$\varphi ( \bar Z_n) \leq 2 \varphi ( Z_n) + 2 \varphi ( h/2)$ almost surely.
Since $(\varphi (Z_n))_{n\geq 1}$ is uniformy integrable, this proves the  uniform integrability of 
$(\varphi (\bar Z_n))_{n\geq 1}$ and ends the proof of Item (b) of Corollary \ref{asymptoticconstants}.   

\medskip\no
{\bf Proof of Item (a) of Corollary \ref{asymptotickappa1g}}. By Lemma 2.1(a) in Rio \cite{Rio11}, for any positive $\varepsilon$, 
the sequence of  functions $(F_n^{-1}- \sqrt{n} \Phi^{-1})_{n\geq 1}$ converges uniformly to $\mu_3 ( (\Phi^{-1})^2 - 1) /6$
 on $[\varepsilon, 1- \varepsilon]$. This implies that the above sequence of functions is uniformly bounded over $[\varepsilon , 1- \varepsilon]$. 
Next $g (U)$ belongs to $L_{\varphi}^*$ and $L_{\varphi^*} \subset L^1$, which ensures the $g$ is integrable over $[0,1]$. Hence, 
by the dominated convergence theorem, 
\beq \label{Lebesguecv}
\lim_{n\rightarrow \infty} \int_\varepsilon^{1-\varepsilon} g(u) |F_n^{-1} (u) - \sqrt{n} \Phi^{-1} (u) | du = \frac{ |\mu_3|}{6} 
\int_\varepsilon^{1-\varepsilon} g(u) | ( \Phi^{-1} (u) )^2 - 1 | du . 
\eeq
\par
It remains to prove that 
\beq \label{tensionkappa1g}
\lim_{\varepsilon \rightarrow 0} \sup_{n\geq 1} \int_{]0,1[ \setminus [\varepsilon, 1- \varepsilon]} g(u) |F_n^{-1} (u) - \sqrt{n} \Phi^{-1} (u) | du = 0.
\eeq
Let $a$ be a positive real such that $\BBE \bigl( \varphi^* (a g(U) ) \bigr)< \infty$. By the Young inequality $xy \leq \varphi^* (x) + \varphi (y)$
applied to $x = ag(U)$ and $y =  |F_n^{-1} (u) - \sqrt{n} \Phi^{-1} (u) |$, 
\[
a  \int_{]0,1[ \setminus [\varepsilon, 1- \varepsilon]} g(u) |F_n^{-1} (u) - \sqrt{n} \Phi^{-1} (u) | du \leq 
I ( \varepsilon)  + J_n ( \varepsilon) ,
\]
with 
\[ I (\varepsilon) =  \int_{]0,1[ \setminus [\varepsilon, 1- \varepsilon]} \varphi^* (ag(u) ) du \quad \text{and} \quad 
J_n ( \varepsilon) = \int_{]0,1[ \setminus [\varepsilon, 1- \varepsilon]} \varphi ( F_n^{-1} (u) - \sqrt{n} \Phi^{-1} (u) ) du . 
\]
Now, Lemma \ref{lmaUI} ensures that $\lim_{\varepsilon \rightarrow 0} \sup_{n\geq 1} J_n (\varepsilon) = 0$. Next,
since $u \mapsto \varphi^* (ag(u))$ is integrable over $]0,1[$,
$\lim_{\varepsilon \rightarrow 0} I (\varepsilon) = 0$, 
which completes the proof of \eqref{tensionkappa1g}. Now \eqref{Lebesguecv} and  \eqref{tensionkappa1g} imply 
Item (a) of Corollary \ref{asymptotickappa1g}.  \qed

\subsection{Proof of Theorem \ref{UpperBoundHLtail}}

We start with the following lemma, which is of independent interest. The proof of this lemma is postponed to Appendix.  

\begin{Lemma} \label{lemmaHtilde} Let $A$ and $B$ be two real-valued random variables. Then for any real numbers $x$ and $t$,
$\tilde H_{A+B} (x)  \leq \max \big (  \tilde H_{A} (t) , \tilde H_{B}  (x-t)  \big )$.
\end{Lemma}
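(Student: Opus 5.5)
Recall that $\tilde H_Z(x)=\inf\{(x-s)^{-1}\BBE((Z-s)_+): s<x\}$. The plan is to exploit the elementary pointwise inequality
\[
(a+b-r)_+ \le (a-s)_+ + (b-s')_+ \quad\text{whenever } r=s+s',
\]
which holds for all reals $a,b,s,s'$. Indeed, if $a+b-r\le 0$ it is trivial. Otherwise $a+b-r=(a-s)+(b-s')\le (a-s)_+ + (b-s')_+$.

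Fix $x$ and $t$. Let $u_1=\tilde H_A(t)$, $u_2=\tilde H_B(x-t)$ and $u=\max(u_1,u_2)$. If $u=\infty$ there is nothing to prove. Take any $\varepsilon>0$.

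By definition of the infima, pick $s<t$ with $\BBE((A-s)_+)\le (u_1+\varepsilon)(t-s)$. Likewise pick $s'<x-t$ with $\BBE((B-s')_+)\le (u_2+\varepsilon)(x-t-s')$.

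Set $r=s+s'$. Then $r<x$, and $x-r=(t-s)+(x-t-s')$. Taking expectations in the pointwise inequality gives
\[
\BBE((A+B-r)_+)\le (u+\varepsilon)(t-s)+(u+\varepsilon)(x-t-s')=(u+\varepsilon)(x-r).
\]
Hence $\tilde H_{A+B}(x)\le (x-r)^{-1}\BBE((A+B-r)_+)\le u+\varepsilon$. Letting $\varepsilon\to0$ yields the lemma.

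A few degenerate cases need checking. If $\BBE(A_+)$ is infinite, then $\tilde H_A$ is infinite (or conventionally equal to $1$), so the bound is trivial. The definition is a plain infimum, so no truncation at $1$ is involved. The only real point is the convexity-type pointwise inequality, which is immediate, so I expect no serious obstacle. One only has to make sure that the infimum is approached by admissible $s$, and that $x-r>0$ holds automatically.
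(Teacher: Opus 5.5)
Your proof is correct and is essentially the paper's argument. The paper reduces to $x=0$ via $C=B-x$ and writes the thresholds as $v=t-s>0$, $w=x-t-s'>0$; it then uses the same pointwise bound $(A+C+v+w)_+\le (A-t+v)_+ + (C+t+w)_+$, bounds the resulting convex combination by the maximum, and lets $\varepsilon\to 0$, so the only difference from your version is that you bound each piece by $u+\varepsilon$ before summing rather than after.
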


\begin{Remark} The classical inequality 
${\mathbb P} ( A+B \geq x) \leq {\mathbb P} ( A \geq t) +  {\mathbb P} ( B > x-t)$
for usual tail functions
cannot be improved, according to Proposition 1 in Pinelis \cite{Pi2019}. The above lemma proves that for sums of r.v.'s. $\tilde H$ has a better behavior than the tail function.  
\end{Remark}

We go back to the proof of Theorem \ref{UpperBoundHLtail}. We first note that 
\[
\tilde H_{Z_n} (t)  = \tilde H_{Z_n/\sigma} (t/ \sigma)   \, .
\]
We shall apply Lemma \ref{lemmaHtilde}  with $A = | \sigma^{-1}S_n - T_n | $ and 
$B= | T_n - G_{n } | $ where $T_n$ be the random variable defined in the proof of Theorem \ref{Thmkappa}. For any $\alpha \in ]0,1[$ and any positive $t$, we then get
\beq \label{th2.3ine0}
\tilde H_{Z_n} (t)   \leq  \max \big (  \tilde H_{X} ( | \sigma^{-1}S_n - T_n|) ( \alpha t/ \sigma) , \tilde H_{|T_n - G_{n }|}  ((1-\alpha)  t/ \sigma)  \big )  \, .
\eeq
Next,  for any $s >0$, note that $(x-s)_+ \leq  s^{-1} \psi_s (x)$ where $\psi_s$ is defined in \eqref{defpsi}. Hence, for any  real-valued random variable $Z$ and any positive real $u$,
\beq \label{ineHZu}
\tilde H_{|Z|} (u)   \leq \inf_{s < u } \frac{1}{(u -s) s } \BBE ( \psi_s (|Z|) )   \leq  \frac{4}{ u^2 } \BBE ( \psi_{u/2} (|Z|) )   \, .
\eeq
On one hand, taking into account \eqref{ineHZu} and \eqref{boundfinal1}, we get
\beq \label{th2.3ine1}
\tilde H_{|T_n - G_{n }|}  \Big  ( \frac{(1-\alpha)  t}{ \sigma } \Big )    \leq   \frac{4 \sigma^2}{(1-\alpha)^2  t^2 }  \kappa_{\psi_{u/2} }   (P_{T_n}, P_{G_n}) \leq   \frac{4 \times 0.937}{(1-\alpha)^2  t^2 }  \frac{\lambda_3^2}{ \sigma^4} \, .
\eeq
On another hand, taking into account  \eqref{ineHZu} together with Proposition  \eqref{boundfinal0}, the upper bound \eqref{UpperboundB(n,x)} and the fact that 
$ |\BBE ( X^3) | \leq \lambda_3$ and $\sigma^3 \leq  \lambda_3$, we infer that 
\beq \label{th2.3ine2}
\tilde H_{ | \sigma^{-1}S_n - T_n| }  \Big  ( \frac{\alpha t }{ \sigma } \Big )     \leq   \frac{4 }{ \alpha^2  t^2 } \Big ( ( \gamma_1 + \frac{3}{5} + \gamma_3)   \frac{\lambda_3^2}{ \sigma^4}  +  \frac{\gamma_2}{ \sigma^2}   \BBE   ( |X|^3   ( |X|    \wedge 4 \alpha t )   ) \Big ) \, .
\eeq
In the  inequality above the constants $\gamma_1$, $\gamma_2$ and $\gamma_3$ are those involved in the statement of Theorem \ref{Thmkappa}. In particular the constant  
$  a := \gamma_1 + 3/5 + \gamma_3   $ can be chosen equal to $5.3294$.  We now choose $\alpha = \tilde \alpha$ such that 
\beq \label{conditionalpha}
\frac{a }{  \tilde \alpha^2  t^2 } \geq  \frac{ 0.937}{(1- \tilde \alpha)^2  t^2 }  \, .
\eeq
Numerical computation  gives that $\tilde \alpha = 0.7045699$ satisfies \eqref{conditionalpha}. 
Then starting from \eqref{th2.3ine0} and taking into account \eqref{th2.3ine1} and \eqref{th2.3ine2}, we derive that, for any positive real $t$, 
\[
\tilde H_{Z_n} (t)  \leq  \frac{4 a }{   \tilde \alpha^2  t^2 }  \frac{\lambda_3^2}{ \sigma^4}  +  \frac{ 4 \gamma_2}{ \tilde \alpha^2  \sigma^2 }   \BBE   ( |X|^3   ( |X|    \wedge 4  \tilde \alpha t )   ) \, . 
\]
This ends the proof  by taking into account the values of $a$, $ \tilde \alpha$ and $\gamma_2$. \qed

\subsection{Proof of Corollary \ref{corweakmoments}}
According to Theorem \ref{UpperBoundHLtail} and the fact that $\tilde H_{Z_n} (t)  \leq 1$, we have 
\[
t^p \tilde H_{Z_n} (t) \leq t^p \min ( 1 , 42.943 \, \sigma^{-4} \lambda_3^2 t^{-2} )  + 5.2041 t^{p-2}  \, \sigma^{-2} \BBE \bigl( |X|^3 \min ( |X| , 2.8183 t ) \bigr) 
\]
since  $\min (a+b, 1) \leq  \min (a, 1) + b$ for any nonnegative reals $a$ and $b$.   Next,  for $p \in ]1,2[$,   note that 
\beq \label{comparisonweakmoment}
 \BBE  \big ( |X|^3 \min ( |X| , u ) \big )  \leq  (p+2) \bigl ( (2-p)(p-1) \bigr)^{-1}  \Lambda_{p+2} (X) u^{2-p} \, .
\eeq 
Whence
\[
t^p \tilde H_{Z_n} (t) \leq t^p \min ( 1 , 42.943 \, \sigma^{-4} \lambda_3^2 t^{-2} )  +  a_2 (p+2) \bigl ( (2-p)(p-1) \bigr)^{-1}  \sigma^{-2} \Lambda_{p+2} (X) .
\]
The supremum over $t >0$ is reached at $t_0 $ such that $1 =  42.943 \, \sigma^{-4} \lambda_3^2 t_0^{-2}$. Numerical computation shows that
$t_0 \leq 6.5531   \, \sigma^{-2} \lambda_3$. This proves the first part of the corollary. When the random variables have strong moment of order $p+2$,  we use the fact that 
\[\BBE \bigl( |X|^3 \min ( |X| , 2.8183 t ) \bigr) \leq   (2.8183 t )^{2-p} \BBE (|X|^{p+2}) , \]
and the same arguments as before. 
\qed

\subsection{Proof of Corollary \ref{asymptoticconstantsforLambda}}

The proof is based on the following lemma.

\begin{Lemma} \label{lmaUConQ}  Under the conditions of Corollary \ref{asymptoticconstantsforLambda} and if the distribution of $X$ is not a lattice distribution, then: 
\begin{itemize}
\item[(a)] for any $\varepsilon >0$, the sequence of functions $ ( Q_{Z_n} )_{n \geq 1}$ converges uniformly to $Q_{|G^2 -1|}  \times \frac{| \BBE ( X^3)|}{6} $ over $[\varepsilon, 1]$.
\item[(b)]  $\lim_{u \rightarrow 0} \sup_{n \geq 1} u^{1/p} {\tilde Q}_{Z_n}  (u) = 0$. 
\end{itemize}
\end{Lemma}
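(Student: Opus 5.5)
For part (a), I will use the representation $Z_n = |F_n^{-1}(U) - \sqrt n \Phi^{-1}(U)|$ with $U=\Phi(G)$, as in the proof of Corollary \ref{asymptoticconstants}. I will compare it with $Z := \frac{|\BBE(X^3)|}{6}\,|(\Phi^{-1}(U))^2-1|$.

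The Cornish--Fisher expansion (Lemma 2.1(a) in Rio \cite{Rio11}) gives, for each fixed $\eta>0$, uniform convergence of $F_n^{-1}-\sqrt n\Phi^{-1}$ to $\mu_3((\Phi^{-1})^2-1)/6$ on $[\eta,1-\eta]$. Hence $\delta_n(\eta):=\sup_{u\in[\eta,1-\eta]}|Z_n-Z|$, computed pointwise in $u$, tends to $0$.

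To pass to tail quantiles I will use the elementary comparison $\BBP(Z_n>y)\le \BBP(Z>y-\delta_n(\eta))+2\eta$, and symmetrically. This yields
$$Q_Z(v+2\eta)-\delta_n(\eta)\le Q_{Z_n}(v)\le Q_Z(v-2\eta)+\delta_n(\eta).$$
The function $Q_{|G^2-1|}$ is continuous and bounded on $[\varepsilon/2,1]$, because the law of $|G^2-1|$ is continuous with support $[0,\infty)$. It is therefore uniformly continuous there. Choosing $\eta$ small relative to $\varepsilon$ and then $n$ large gives uniform convergence on $[\varepsilon,1]$. The endpoint $v=1$ is handled separately, since $Q(1)$ is the essential infimum and equals $0$ for both variables by the same comparison. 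The only care needed is the bookkeeping of the $2\eta$ shifts near $v=\varepsilon$.

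For part (b), I split by the value of $p$. When $p=1$, I will use $u\tilde Q_{Z_n}(u)=\int_0^u Q_{Z_n}(v)\,dv=\BBE(Z_n\mathbf 1_{\{U_n\le u\}})$. This is bounded by $\BBE(Z_n\mathbf 1_{Z_n\ge M})+Mu$, and uniform integrability of $(Z_n)$ follows from Lemma \ref{lmaUI} applied with $\varphi=g_{1.5}$ (or any $\varphi\in\Psi$ with $\varphi(x)\ge x$ for large $x$ and $\BBE(X^2\varphi(|X|))<\infty$, which holds since $X\in\mathbb L^3$).

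For $p\in\,]1,2[$ I will use the identity $\tilde\Lambda_p=\|\cdot\|_{w,p}^p$ and Theorem \ref{UpperBoundHLtail}. Since $\tilde H_{Z_n}$ is the tail function of $\tilde Q_{Z_n}$, $u^{1/p}\tilde Q_{Z_n}(u)\to 0$ uniformly follows once I show $\sup_n\sup_{t\ge T}t^p\tilde H_{Z_n}(t)\to0$ as $T\to\infty$. Theorem \ref{UpperBoundHLtail} gives
$$t^p\tilde H_{Z_n}(t)\le 42.943\,\lambda_3^2t^{p-2}+5.2041\,t^{p-2}\,\BBE\bigl(|X|^3\min(|X|,2.8183t)\bigr),$$
which is uniform in $n$. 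The first term tends to $0$ because $p<2$. For the second, I write $\BBE(|X|^3\min(|X|,ct))=\int_0^{ct}\BBE(|X|^3\mathbf 1_{|X|>s})\,ds$ and use $\BBE(|X|^3\mathbf 1_{|X|>s})=s^3\BBP(|X|>s)+3\int_s^\infty r^2\BBP(|X|>r)\,dr=o(s^{1-p})$ by \eqref{conditionontail}. Integrating gives $o(t^{2-p})$, so the second term tends to $0$.

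I then translate back. If $t^p\tilde H_{Z_n}(t)\le\epsilon$ for all $t\ge T$ and all $n$, then for $u$ small we have $\tilde Q_{Z_n}(u)\le\max(T,(\epsilon/u)^{1/p})$ by \eqref{GeneralizedInverse}. Hence $u^{1/p}\tilde Q_{Z_n}(u)\le\max(Tu^{1/p},\epsilon^{1/p})$, which concludes (b).

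I expect the main obstacle to be the $o(t^{2-p})$ tail computation in (b). This is where condition \eqref{conditionontail} is genuinely needed, and where the endpoint case $p=1$ must be handled separately by uniform integrability.
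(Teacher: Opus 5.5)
Your argument is correct in substance. Part (a) and the case $p=1$ of part (b) take a different route from the paper; the case $p\in]1,2[$ follows it.

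\textbf{Part (a).} The paper uses only the convergence in law $Z_n\to Z$ given by the Cornish--Fisher expansion. Since the law of $Z$ is continuous, this yields pointwise convergence $Q_{Z_n}(u)\to Q_Z(u)$. Monotonicity of $Q_{Z_n}$ and the second Dini theorem then upgrade it to uniform convergence on $[\varepsilon,1]$.

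You instead use the uniform Cornish--Fisher convergence on $[\eta,1-\eta]$ to build the explicit sandwich $Q_Z(v+2\eta)-\delta_n(\eta)\le Q_{Z_n}(v)\le Q_Z(v-2\eta)+\delta_n(\eta)$. You then conclude by uniform continuity of $Q_Z$. This is longer, but it makes the uniformity quantitative and avoids Dini. Both arguments rest on the same fact: $Q_{|G^2-1|}$ is continuous because the support of $|G^2-1|$ is an interval.

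Near $v=1$ you do not need $Q_{Z_n}(1)=0$ exactly, which need not hold for each $n$. The trivial bound $Q_{Z_n}\ge 0$ together with your upper bound already gives uniform closeness there.

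\textbf{Part (b), $p\in]1,2[$.} You follow the paper: Theorem \ref{UpperBoundHLtail}, the tail condition \eqref{conditionontail}, then \eqref{GeneralizedInverse}. Your derivation of the $o(t^{2-p})$ bound spells out what the paper states in one line.

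\textbf{Part (b), $p=1$.} No separate treatment is needed. The paper uses the same bound from Theorem \ref{UpperBoundHLtail}, noting that $t^{-1}\BBE(|X|^3\min(|X|,ct))=\BBE(|X|^3\min(|X|/t,c))\to 0$ by dominated convergence, since $X\in\mathbb L^3$.

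Your uniform-integrability route via $u\tilde Q_{Z_n}(u)\le \BBE(Z_n\mathbf 1_{Z_n\ge M})+Mu$ also works, but the choice $\varphi=g_{1.5}$ is wrong. The condition $\BBE(X^2 g_{1.5}(|X|))<\infty$ requires $\BBE(|X|^{7/2})<\infty$, which is not assumed when $p=1$, since no tail condition is imposed in that case. Take $\varphi=g_1$ instead. Since $g_1(x)\le x$, you get $\BBE(X^2g_1(|X|))\le\BBE(|X|^3)<\infty$. Since $g_1(x)\ge x/2$ for $x\ge 1$, the uniform integrability of $(g_1(Z_n))_{n\geq 1}$ from Lemma \ref{lmaUI} gives that of $(Z_n)_{n\geq 1}$.
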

\noindent \textbf{Proof of Lemma \ref{lmaUConQ}.} We start with the proof of Item (a).  Recall that by the Cornish-Fisher expansion, $ 
Z_n \rightarrow^{\mathcal L} Z $ as $ n \rightarrow \infty $, where $Z=|G^2 -1| \times \frac{| \BBE ( X^3)|}{6} $. Since $Z$ is a continuous r.v., it follows that, for any $u \in ]0,1] $, 
$Q_{Z_n} (u) \rightarrow Q_{Z} (u) =  Q_{|G^2 -1|} (u)  \times \frac{| \BBE ( X^3)|}{6} $, as $n \rightarrow \infty$ (see for instance Lemma 21.2 in \cite{VdV98}).  
Note now that for any positive integer $n$, $u \mapsto Q_{Z_n} (u) $ is nonincreasing. Whence, by the second Dini's Theorem, the convergence is also uniform over all intervals of the form $[\varepsilon,  1]$ for any $\varepsilon >0$ and Item (a) is proved.

We turn now to Item (b). We first prove that 
\beq \label{inequalitywithtildeH}
\lim_{t \rightarrow \infty} t^p  \sup_{n \geq 1} {\tilde H}_{Z_n} (t) = 0 \, .
\eeq
By Theorem \ref{UpperBoundHLtail}, there exists a positive constant $C$ such that for any positive $t$,
 \[
t^2 \sup_{n \geq 1} \tilde H_{Z_n} (t) \leq C \Big (  1 +  \BBE \bigl( |X|^3 \min ( |X| , t ) \bigr)  \Big ).
\]
Next 
\[
\lim_{t \rightarrow \infty} t^{p-2} \BBE \bigl( |X|^3 \min ( |X| , t ) \bigr)  = 0 \, .
\]
For $p \in ]1,2[$, this follows from the fact that  $H_{|X|} (x) = o ( x^{-p-2} )$, and for $p=1$ this follows from the dominated convergence theorem and the fact that 
$\BBE (|X|^3) < \infty$. Now the above convergence  implies  \eqref{inequalitywithtildeH}. Next, for all $\eta >0$ there exists $A >0$ such that for any $n \geq 1$ and any $x \geq A$, 
$ \tilde H_{Z_n} (x) < \eta^p  x^{-p}$.  Hence, according to inequality \eqref{GeneralizedInverse},  $ \tilde Q_{Z_n} (\eta^p  x^{-p}) < x$ for any $x \geq A$. This implies that 
$ \eta  x^{-1} \tilde Q_{Z_n} (\eta^p  x^{-p}) < \eta$ for any $x \geq A$.  It follows that for any $u < \eta^p A^{-p}$,  $u^{1/p} \tilde Q_{Z_n} (u) < \eta$, for any $n \geq 1$. This ends 
the proof of Item (b) of Lemma \ref{lmaUConQ}. \qed

\medskip

We now return to the  proof of  Corollary \ref{asymptoticconstantsforLambda}.  Let us start by Item (a). According to \eqref{weakmomentsq} and the fact that  $ H_Z (x) < u$ iff $x> \tilde Q_Z (u) $, we have $\Lambda_p (Z_n) = \sup_{u \in ]0,1]} u  { Q}^p_{Z_n}  (u) $.  By Item (a)  of Lemma \ref{lmaUConQ},  for any 
$\varepsilon >0$,  
\[
 \lim_{n \rightarrow \infty}   \sup_{u \in [\varepsilon, 1]} u^{1/p} { Q}_{Z_n} (u) =  \frac{| \BBE ( X^3)|}{6}   \sup_{u \in [\varepsilon, 1]} u^{1/p} { Q}_{|G^2 -1|} (u) .
\]
Moreover, by Item (b)  of Lemma \ref{lmaUConQ}, 
\[
\lim_{ \varepsilon \rightarrow 0}\sup_{n \geq 1}  \sup_{u \in [0, \varepsilon]} u^{1/p} { Q}_{Z_n} (u)  =  0 \ \text{ and } \ \lim_{ \varepsilon \rightarrow 0} 
\sup_{u \in [0, \varepsilon]} u^{1/p} {  Q}_{|G^2 -1|} (u) =0 .
\]
This ends the proof of the first part  of Item (a) of Corollary \ref{asymptoticconstantsforLambda}. 

We turn now to the proof of the second part  of Item (a) of Corollary \ref{asymptoticconstantsforLambda}. 
With this aim, we first  prove that Item (a)  of Lemma \ref{lmaUConQ} also holds if one replaces  ${ Q}_{Z_n}$ by ${\tilde Q}_{Z_n}$ and ${ Q}_{|G^2 -1|} $  by ${\tilde Q}_{|G^2 -1|} $. 
First, Item (a)  of Lemma \ref{lmaUConQ} implies the pointwise convergence of  ${ Q}_{Z_n}$ to $Q_{|G^2 -1|}  \times \frac{| \BBE ( X^3)|}{6} $ over $]0, 1]$. Next,  by  
Items (a) and (b)  of Lemma \ref{lmaUConQ}, there exists a positive constant $C$, such that for any $u \in ]0,1] $,  $\sup_{n \geq 1}  { Q}_{Z_n} (u) \leq C u^{-1/p}$.  Hence, by  the dominated convergence theorem, for any $u \in [0,1]$, 
\[
 \lim_{n \rightarrow \infty}   u {  \tilde Q}_{Z_n} (u) =  \lim_{n \rightarrow \infty} \int_0^u  { Q}_{Z_n} (v) dv  = u {\tilde Q}_{|G^2 -1|} (u)   \times \frac{| \BBE ( X^3)|}{6} \, .
\]
Since, for any $n$, $u \rightarrow u  {  \tilde Q}_{Z_n} (u) $ is nondecreasing, by the second Dini's theorem, the above convergence is uniform over $[0,1]$. It follows that, for any 
$\varepsilon >0$,  ${\tilde Q}_{Z_n}$ converges uniformly to  ${ \tilde Q}_{|G^2 -1|}    \times \frac{| \BBE ( X^3)|}{6}  $  over $[\varepsilon, 1]$, which implies that 
\[
 \lim_{n \rightarrow \infty}   \sup_{u \in [\varepsilon, 1]} u^{1/p} {\tilde Q}_{Z_n} (u) =  \frac{| \BBE ( X^3)|}{6}   \sup_{u \in [\varepsilon, 1]} u^{1/p} { \tilde Q}_{|G^2 -1|} (u) .
\]
Finally
\[
\lim_{ \varepsilon \rightarrow 0}\sup_{n \geq 1}  \sup_{u \in [0, \varepsilon]} u^{1/p} {\tilde Q}_{Z_n} (u)  =  0 \ \text{ and } \  \lim_{ \varepsilon \rightarrow 0} 
\sup_{u \in [0, \varepsilon]} u^{1/p} { \tilde Q}_{|G^2 -1|} (u) =0 .
\]
This ends the proof of the second part of Item (a) of Corollary \ref{asymptoticconstantsforLambda}.  

\medskip

The proof of Item (b) of Corollary \ref{asymptoticconstantsforLambda} follows the same path as that for Item (a).
Using the fact that ${\bar Z}_n \rightarrow^{\mathcal L} Z $, as  $n \rightarrow \infty$ (see for instance Lemma 2.1 (b) in Rio \cite{Rio11}),
one obtains that  Item (a) of Lemma \ref{lmaUConQ} also holds for the sequence $(Q_{\bar Z_n})_{n\geq 1}$.
Next, from Inequality \eqref{diffZbarZ}, $\tilde Q_{\bar Z_n} \leq \tilde Q_{Z_n} + (h/2)$, which ensures that 
Item (b) of Lemma \ref{lmaUConQ} also holds for the sequence $(\tilde Q_{\bar Z_n})_{n\geq 1}$. Now the end of the  proof of Item (b) of Corollary \ref{asymptoticconstantsforLambda} is exactly the same as for Item (a). \qed

\section{Appendix} \label{sectionAppendix}

\setcounter{equation}{0}

\subsection{Proof of Proposition \ref{PropPoisson}}

By homogeneity, it suffices to prove the result for $\alpha=1$. For any $m >0$,  note that 
\begin{equation} 
\Pi (m)  - m = \sum_{\ell \geq 0} \big ( 2 \Pi ( 2^{\ell} m) -  \Pi ( 2^{\ell+1} m) \big ) 2^{-\ell-1}  \mbox{ in } {\mathbb L}^2 . 
\end{equation}
Indeed, for any positive integer $N$,
\[
\sum_{\ell = 0}^N \big ( 2 \Pi ( 2^{\ell} m) -  \Pi ( 2^{\ell+1} m) \big ) 2^{-\ell-1} =  \Pi (m)  - 2^{-N-1} \Pi ( 2^{N+1} m)
\]
and $\Vert 2^{-N-1} \Pi ( 2^{N+1} m)  - m \Vert_2^2 = 2^{-N-1} m$,  
which converges to zero as $N \rightarrow \infty$. 

Let $ {\tilde U}_{\ell +1,m} :=  2 \Pi ( 2^{\ell} m) -  \Pi ( 2^{\ell+1} m)$ and  $U_{\ell +1,m} :=    \Pi ( 2^{\ell+1} m)$.  Let ${\Phi}$ be the distribution
function of a standard real-valued Gaussian random variable and, for any $\ell \in {\mathbb N}$, let ${\tilde F}_{\ell +1,m}$ be the distribution
function of the conditional law of $ {\tilde U}_{\ell +1,m}$ given  ${U}_{\ell +1,m}$. Let $(\delta_\ell)_{\ell \geq 0}$ be a sequence of i.i.d. random
variables with uniform distribution on $[0,1]$, independent of the Poisson process $(\Pi (t))_{t \geq 0}$. For any $\ell \geq 0$, let 
\[
\xi_{\ell,m}  =  \Phi^{-1}  \Big (  {\tilde F}_{\ell +1,m} (  {\tilde U}_{\ell +1,m} - 0) + \delta_{\ell}  \big ( {\tilde F}_{\ell +1,m} (  {\tilde U}_{\ell +1,m} ) - {\tilde F}_{\ell +1,m} (  {\tilde U}_{\ell +1,m} - 0) \big ) \Big )  . 
\]
By the properties of the conditional quantile transform,  $\xi_{\ell,m}$ is independent of ${U}_{\ell +1,m}$, $\sigma (\delta_{\ell} ,    {\tilde U}_{\ell +1,m}, U_{\ell +1,m})$-measurable and ${\mathcal N} (0,1)$ distributed (for more details see Lemma F1 in \cite{Rio2017}).   Setting ${\mathcal G}_{\ell +1} = \sigma  ( (U_{\ell +k,m}, \delta_{\ell +k}), k \geq 1)$, we note that $  \xi_{\ell,m} $ is independent of ${\mathcal G}_{\ell +1} $. By induction, it follows that the random variables $ ( \xi_{\ell,m} )_{\ell \geq 0}$  are independent and ${\mathcal N}(0,1)$ distributed.


Next, let 
\[
 W(m) :=   \sum_{\ell \geq 0}  \xi_{\ell,m} \sqrt{2^{\ell +1} m }  2^{-\ell-1}  . 
\]
From the above facts, $W(m)$ is $ {\mathcal N} (0,m)$ distributed. 
\par
Next, for any $\ell \geq 0$, let $V_{\ell,m} =  \xi_{\ell,m} \sqrt{2^{l+1}m}   - { \tilde U}_{\ell +1,m}$.  Notice that ${\mathcal L} ( { \tilde U}_{\ell +1,m} |  {\mathcal G}_{\ell +1})$ is the law of $2B -  U_{\ell +1,m}$ with $B \sim {\mathcal B} ( U_{\ell +1,m}, 1/2) $. 
Hence $\BBE (  { \tilde U}_{\ell +1,m} |  {\mathcal G}_{\ell +1} ) =0$ implying that  the random variables $(V_{\ell,m})_{\ell \geq 0}$ are orthogonal in ${\mathbb L}^2$.  Therefore it follows that 
\[
\Vert \Pi(m) - m - W(m)  \Vert^2_2 = \Big \Vert   \sum_{\ell \geq 0}  V_{\ell,m}  2^{-\ell-1} \Big \Vert_2^2 =   \sum_{\ell \geq 0}    2^{-2 ( \ell+1) }  \Vert V_{\ell,m}   \Vert_2^2 ,
\]
from which
\beq \label{lienWassersteinPoissonV}
\Vert \Pi(m) - m - W(m)  \Vert^2_2 \leq  \frac 13 \, \sup_{\ell \geq 0}  \Vert V_{\ell,m}   \Vert_2^2 .
\eeq
We now take care  of the quantities $\Vert V_{\ell,m}   \Vert_2^2$. Set ${\tilde W}_{\ell+1,m} = \xi_{\ell,m} \sqrt{2^{l+1}m}$. Then 
\[
\Vert V_{\ell,m}   \Vert_2^2 = \Vert {\tilde U}_{\ell + 1, m} - {\tilde W}_{\ell+1,m} \Vert_2^2 .
\]

Let  $\mu_0 = 4/3$.  We first consider the case  $\mu \leq \mu_0$. 
In order to shorten the notations, we set 
$\mu = 2^{\ell+1}m$, $U = U_{\ell + 1,m}$, $\tilde U = {\tilde U}_{\ell + 1, m}$ and $\tilde W = {\tilde W}_{\ell+1,m}$.  We first write
\beq \label{equalitycov}
 \BBE (\tilde U \tilde W) = \sum_{n >0} \BBP ( U=n) \BBE ( \tilde U \tilde W \, | \,U=n) \, , 
\eeq
since $\tilde U = 0$ if $U=0$.  Since $\xi_{\ell,m} $ is the conditional quantile transform of $ \tilde U$ conditionally to $U$, according to Exercise 3)-a) page 30 in  Rio \cite{Rio2017}, for any $n>0$,  
\[
  \BBE ( \tilde U  \xi_{\ell,m}   \, | \,U=n)  \geq     \BBE ( \tilde U  \, | \,U=n)  \BBE (  \xi_{\ell,m}   \, | \,U=n)  = 0 \, .
 \]
This implies that  $\BBE ( \tilde U \tilde W ) \geq 0$. Hence
\beq \label{Bornemupetit}
\BBE \bigl( \bigl( \tilde U - \tilde W \bigr)^2 \bigr) \leq \BBE ( \tilde U^2 + \tilde W^2)  = 2 \mu \leq 2 \mu_0 = 8/3.
\eeq

We turn now to the case $\mu > \mu_0$.  In order to bound up the ${\mathbb L}^2$ distance between $\tilde U$ and $\tilde W$, we will give a lower bound on 
 $\BBE (\tilde U \tilde W)$. Indeed 
 \beq \label{L2distanceUW}
\Vert V_{\ell,m}   \Vert_2^2 = \BBE \big (  (\tilde U - \tilde W)^2 \big )= 2\mu  - 
2 \BBE (\tilde U \tilde W),
\eeq
since $\BBE \bigl({\tilde W}^2 \bigr) =  \mu $,  $\BBE (  { \tilde U}^2 \mid  {\mathcal G}_{\ell +1} ) = U_{\ell +1, m}$ and $\BBE ( U_{\ell+1,m} ) = 2^{\ell+1} m$.
\par\smallskip
We rewrite \eqref{equalitycov} as follows: 
\beq \label{1}
 \BBE (\tilde U \tilde W) = \sum_{n >0} \BBP ( U=n)  \BBE ( n^{-1/2} \tilde U \mu^{-1/2}  \tilde W \mid U=n) \sqrt{n\mu}  \, .
 \eeq
 Now 
\beq \label{2}
  \BBE ( n^{-1/2} \tilde U \mu^{-1/2}  \tilde W \, | \,U=n) = 1 - {\textstyle \frac 12} \BBE \bigl(  \bigl( n^{-1/2} \tilde U - \mu^{-1/2} \tilde W \bigr)^2 \mid U=n  \bigr) .
 \eeq
  Note now that  ${\mathcal L} ( { \tilde U} | U=n)$ is the law of $2B -  n$ with $B \sim {\mathcal B} ( n, 1/2) $.  Hence ${\mathcal L} ( n^{-1/2} { \tilde U} | U=n)$ is the law of 
  $n^{-1/2} ( \varepsilon_1 + \cdots + \varepsilon_n)$ where $(\varepsilon_i)_{1 \leq i \leq n}$ are iid r.v.'s with law ${\mathbb P} (\varepsilon_1 =1)= {\mathbb P} (\varepsilon_1 =-1)=1/2$. In addition, the conditional law of $\mu^{-1/2} \tilde W$ given $U=n$ is a ${\mathcal N} (0,1)$. Therefore, by Lemma \ref{TMG} below, 
  \beq \label{3}
  \BBE \Bigl(  \bigl( n^{-1/2} \tilde U - \mu^{-1/2} \tilde W \bigr)^2 \mid  U=n\Bigr) \leq \min \Bigl( \frac{33}{16n} , 2 \bigl( 1 - \sqrt{2/\pi} \, \bigr) \Bigr) .
  \eeq
Starting from \eqref{1} and considering  \eqref{2} and \eqref{3}, it follows that 
  \[
   \BBE (\tilde U \tilde W) \geq  \sum_{n >0} \BBP ( U=n) \sqrt{n\mu} \Bigl( 1 - \min \Bigl(  \frac{33}{32n} , 1- \sqrt{2/\pi} \,\Bigr) \Bigr).
   \]
  The last inequality is equivalent to 
   \[
  \mu^{-1/2}  \BBE (\tilde U \tilde W) \geq \BBE \bigl( \sqrt{U} \bigr) - \BBE \Bigl( \min\Bigl( \frac{33}{32\sqrt{U} } , (1- \sqrt{2/\pi}) \sqrt{U} \Bigr) \Bigr) . 
   \]
  We now claim that 
  \[
   \min\Bigl( \frac{33}{32\sqrt{U} } , (1- \sqrt{2/\pi}) \sqrt{U} \Bigr) \leq c_0 (U+1)^{-1/2}  \text{ with $c_0 = 1.1139$}. 
   \]
Indeed, for $U\leq 5$, $(1- \sqrt{2/\pi}) \sqrt{U} \leq 1.108  (U+1)^{-1/2} $, and, for $U\geq 6$, 
\[
\frac{33}{32\sqrt{U} } \leq \frac{33}{32\sqrt{U+1} } \sqrt{7/6} \leq c_0 (U+1)^{-1/2} .
\]
Hence
 \[
  \mu^{-1/2}  \BBE (\tilde U \tilde W) \geq \BBE \bigl( \sqrt{U} \bigr) - c_0 \, \BBE \Bigl( (U+1)^{-1/2} \Bigr) . 
  \]
Now, recall that $U$ has the Poisson distribution with parameter $\mu$. Therefrom 
\[
\BBE \bigl( (U+1)^{-1/2} \bigr)  =  e^{-\mu} \sum_{n\geq 0} \frac{1}{n!} (n+1)^{-1/2} \mu^n = 
e^{-\mu}  \sum_{n\geq 0} \frac{(n+1)^{1/2}   }{(n+1)!} \mu^n  = \mu^{-1} \BBE \bigl( \sqrt{U} \bigr) . 
\]
The two last inequalities ensure that
\beq \label{Minor1CovtildeUtildeW}
\BBE (\tilde U \tilde W) \geq (\mu - c_0 ) \BBE \bigl( \sqrt{ \mu^{-1} U } \bigr) .
\eeq
\par
We now provide a lower bound on 
 $\BBE \bigl( \sqrt{ \mu^{-1} U } \bigr)$. 
 Let $Z = \mu^{-1} (U-\mu)$. Then $\sqrt{ \mu^{-1} U } = \sqrt{1+Z}$. Now, by the Taylor integral formula at order $4$, for any real $z\geq -1$, 
 \[
 \sqrt{1+z} = 1 + (z/2) - (z^2/8) + (z^3/16) - (5/32) z^4 \int_0^1 (1-t)^3 (1+tz)^{-7/2} dt .
 \]
 Next, for $z\geq -1$, $(1+tz)^{-7/2}  \leq (1-t)^{-7/2}$, from which 
 \[
 -(5/32) z^4 \int_0^1 (1-t)^3 (1+tz)^{-7/2} dt \geq -(5/32) z^4 \int_0^1 (1-t)^{-1/2} dt = -(5/16) z^4.
 \]
 It follows that 
 \[
 \sqrt{\mu^{-1} U} \geq 1 + (Z/2) - (Z^2/8) + (Z^3/16)- (5/16) Z^4
 \]
 Recall now that $U$ has the Poisson distribution with parameter $\mu$. Therefrom $\BBE(Z)=0$, $\BBE (Z^2) = \mu^{-1}$, $\BBE ( Z^3) = \mu^{-2}$
 and $\BBE (Z^4) = \mu^{-3} + 3 \mu^{-2}$. Together with the above inequality, it implies that 
 \beq
 \BBE \bigl( \sqrt{\mu^{-1} U} \bigr) \geq 1 - \frac{1}{8\mu} -\frac{7}{8\mu^2} - \frac{5}{16\mu^3} .
 \eeq
 Combining this lower bound with \eqref{Minor1CovtildeUtildeW} we finally obtain that 
 \[
 \BBE (\tilde U \tilde W) \geq \mu - 1.2389  - 0.7358 \mu^{-1} + 0.6621 \mu^{-2} + 0.3480\mu^{-3} \geq \mu - 1.4055
\]
for any $\mu \geq \mu_0$. The above inequality together with \eqref{L2distanceUW} imply that, for any $\mu\geq \mu_0$,
\beq \label{Bornemugrand}
\BBE \bigl( \bigl( \tilde U - \tilde W \bigr)^2 \bigr) \leq 2 \times 1.4055 = 2.8110. 
\eeq
From \eqref{Bornemupetit}, the above inequality also holds true for $\mu\leq \mu_0$. Finally, combining this upper bound with 
\eqref{lienWassersteinPoissonV}, we  get that 
\beq 
\label{BorneWassersteinPoisson}
\Vert \Pi(m) - m - W(m)  \Vert^2_2 \leq 0.9370.
\eeq
Since $W_2^2 ( \mu_{m,1}, \nu_m) \leq \Vert \Pi(m) - m - W(m)  \Vert^2_2$, the upper bound \eqref{BorneWassersteinPoisson} implies Proposition \ref{PropPoisson}. 
To complete the proof of the proposition, we state and prove Lemma \ref{TMG}.
\begin{Lemma} \label{TMG}
Let $(\varepsilon_i)_{1 \leq i \leq n}$ be iid r.v.'s with law ${\mathbb P} (\varepsilon_1 =1)= {\mathbb P} (\varepsilon_1 =-1)=1/2$,  and $\nu_n$
be the $ {\mathcal N} ( 0,n)$ probability measure.  Then
\[
W_2^2 \big ( P_{\varepsilon_1+ \cdots+\varepsilon_n} , \nu_n \big )  \leq   \min \bigl( 33/16 , 2n \bigl( 1 - \sqrt{2/\pi} \, \bigr) \bigr) .  
\]
\end{Lemma}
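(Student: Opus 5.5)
We must prove two bounds: $W_2^2(P_{S_n},\nu_n)\le 2n(1-\sqrt{2/\pi})$ and $W_2^2(P_{S_n},\nu_n)\le 33/16$, where $S_n=\varepsilon_1+\cdots+\varepsilon_n$.

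\textbf{First bound.} Take the comonotone (quantile) coupling between $S_n$ and a Gaussian $W\sim\nu_n$. Then
\[
W_2^2=\BBE(S_n^2)+\BBE(W^2)-2\BBE(S_nW)=2n-2\BBE(S_nW),
\]
so it suffices to show $\BBE(S_nW)\ge \sqrt{2/\pi}\,n$. Write $\BBE(S_nW)=\sum_i\BBE(\varepsilon_i W)$, and use a comparison coupling rather than the quantile coupling itself. Define $W=\sqrt n\,\xi$, where $\xi$ is built by conditional quantile transforms that keep $\varepsilon_i$ and $\mathrm{sign}$ of its Gaussian counterpart aligned. Concretely, the simplest choice is $W=\sum_i \varepsilon_i|g_i|$ with $g_i$ iid standard normal, independent of the $\varepsilon$'s. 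Then $W\sim\nu_n$ and
\[
\BBE\bigl((S_n-W)^2\bigr)=n\,\BBE\bigl((1-|g|)^2\bigr)=n\bigl(2-2\sqrt{2/\pi}\bigr).
\]
This is exactly the bound, and no optimality of the quantile coupling is needed.

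\textbf{Second bound (the main obstacle).} A uniform constant requires a Tusn\'ady-type coupling. I would invoke the sharp form of Tusn\'ady's lemma as proved in Massart \cite{Ma2002}: with $\xi\sim\mathcal N(0,1)$ and the quantile transform $S_n^\ast$ of $\sqrt n\,\xi$ onto the law of $S_n$,
\[
|S_n^\ast-\sqrt n\,\xi|\le 1+\xi^2/8 \quad\text{and}\quad |S_n^\ast-\sqrt n\,\xi|\le 2+\sqrt n\,|\xi|/\sqrt{\ldots}
\]
(Massart's refined inequalities). Squaring the first bound gives
\[
\BBE\bigl((1+\xi^2/8)^2\bigr)=1+\tfrac{2}{8}+\tfrac{3}{64}=\tfrac{83}{64},
\]
which is already below $33/16$. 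The only caveat is that the cleanest pointwise form, $|S-\sqrt n\,\xi|\le 1+\xi^2/8$, may hold only in Massart's parity-adjusted version. If so, I would absorb the lattice/parity correction, which is at most $1$ in absolute value, using $(a+b)^2\le 2a^2+2b^2$ or a direct expansion. This should land at $33/16=2.0625$; for instance, $\BBE\bigl((1+\xi^2/8)^2\bigr)+\ldots$ with a correction of the form $2\BBE(1+\xi^2/8)\cdot\tfrac12+\tfrac14$ gives $83/64+9/8+1/4$, which is too big. So the precise pointwise constant from Massart's Tusn\'ady lemma must be used carefully.

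The key steps are therefore: state the exact Massart inequality for symmetric binomial quantile couplings, square it, and compute Gaussian moments. I expect matching $33/16$ exactly, likely as $\BBE(\xi^4)/16+\ldots$, to be the main obstacle: one must track which version, $1+\xi^2/8$ or $1+\xi^2/4$ on part of the range, yields precisely $33/16$.
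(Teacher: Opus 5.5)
Your bound $2n(1-\sqrt{2/\pi})$ is correct and is the paper's argument. The coupling $W=\sum_i\varepsilon_i|g_i|$ is the same as the paper's $\eta_k=\mathrm{sign}(Y_k)$ versus $\sum_k Y_k$, since $Y_k=\eta_k|Y_k|$ with $\eta_k$ independent of $|Y_k|$. Your opening remark about the comonotone coupling is unnecessary, because any coupling bounds $W_2^2$ from above.

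The $33/16$ bound is not proved, and the route you sketch cannot work as stated, for two reasons.

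\textbf{The pointwise inequality is false.} $|S_n^\ast-\sqrt n\,\xi|\le 1+\xi^2/8$ fails for every $n\ge 2$. Since $|S_n^\ast|\le n$, it would force $\xi^2/8-\sqrt n\,|\xi|+n+1\ge 0$ for all $\xi$. That quadratic in $|\xi|$ has discriminant $(n-1)/2>0$, so it is negative on an interval of positive $|\xi|$. Concretely, $n=2$ and $\xi=5$ give $5\sqrt2-2\approx 5.07>4.125$. You have moved the binomial-scale bound $|B_n-n/2-\sqrt n\,\xi/2|\le 1+\xi^2/8$ to $S_n=2B_n-n$ without the factor $2$. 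On the Rademacher scale, the inequality the paper uses (Massart's Theorem 1.1) is $|S_n^\ast-\sqrt n\,\xi|\le 3/2+\xi^2/4$.

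\textbf{Squaring the correct bound overshoots.} It gives $\BBE\bigl((3/2+\xi^2/4)^2\bigr)=9/4+3/4+3/16=51/16>33/16$. So ``square and compute Gaussian moments'' cannot reach the target.

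\textbf{The missing idea.} Apply the Tusn\'ady--Massart bound only to the excess over $3/2$, and control the linear part through $W_1$. Set $x=S_n^\ast-\sqrt n\,\xi$. Then
\[
x^2\le \tfrac32|x|+|x|\bigl(|x|-\tfrac32\bigr)_+\le \tfrac32|x|+\bigl(\tfrac32+\tfrac{\xi^2}{4}\bigr)\tfrac{\xi^2}{4}.
\]
The last term has expectation $3/8+3/16=9/16$. The quantile coupling is $W_1$-optimal, so $\BBE|x|=W_1(P_{S_n},\nu_n)$. This is at most $1$ by Goldstein's $L^1$ Berry--Esseen bound with constant $1$, since $\BBE\varepsilon_1^2=\BBE|\varepsilon_1|^3=1$. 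The total is $3/2+9/16=33/16$.
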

\noindent {\it Proof.} Let $G_n \sim {\mathcal N} ( 0,n)$ (hence $P_{G_n} = \nu_n$) and $F_n $ be the c.d.f. of $\varepsilon_1+ \cdots+\varepsilon_n$, and $\Phi$ be the c.d.f. of a $\mathcal N (0,1)$. Define
\[
S_n = F_n^{-1} \bigl (  \Phi  \bigl ( n^{-1/2}  G_n \bigr )  \bigr )  \, .
\]
Then $P_{S_n} = P_{\varepsilon_1+ \cdots+\varepsilon_n}$ and 
$ \BBE \big (   (S_n - G_n )^2 \big ) =W_2^2 ( P_{S_n} , P_{G_n} )$. 
Let $Y = n^{-1/2} G_n$. According to  Theorem 1.1 in \cite{Ma2002}, 
\[
\big |  S_n   -  G_n \big |  \leq  (3/2) + Y^2 /4  .
\]
Now, using the elementary inequality $x^2 \leq \frac{3}{2} |x| + |x| \max  \big ( |x| - \frac{3}{2} , 0 \big )$, we derive that 
\[
\BBE \big (   (S_n - G_n )^2 \big )  \leq \frac{3}{2}   \BBE \big (   \big |    S_n   -  G_n \big |      \big )  +    \BBE  \Big (  \Big [  \frac{3}{2} + \frac{Y^2}{4}\Big ] \frac{Y^2}{4} \Big )   
 \leq  \frac{3}{2}   \BBE \big (   \big | S_n - G_n \big |   \big )  +  \frac{9}{16} .
\]
Next $\BBE \big (   \big | S_n - G_n \big |   \big )  = W_1 ( P_{S_n }, P_{ G_n} )$. Since $\BBE ( \varepsilon_1^2 ) = \BBE (  |\varepsilon_1|^3) = 1$,
according to Theorem  1.1 in \cite{Gold2010}, it follows that
$\BBE \big (   \big | S_n - G_n \big |   \big ) \leq 1$. 
So, overall, 
\[
\BBE \big (   (S_n - G_n )^2 \big ) \leq   (3/2)  +  (9/16)=  33/16 .
\]
It remains to prove that we also have
\beq \label{BoundW2-2}
W_2^2 \big ( P_{\varepsilon_1+ \cdots+\varepsilon_n} , \nu_n \big )  \leq   2n \bigl( 1 - \sqrt{2/\pi} \, \bigr)  .  
\eeq
Let $Y_1, \ldots, Y_n$ be iid r.v.'s with law ${\mathcal N} (0,1)$. Define  $\eta_k$ by 
$\eta_k =  {\bf 1}_{Y_k \geq 0} -   {\bf 1}_{Y_k < 0}$ for $k$  in $[1,n]$.
Then $\eta_1, \ldots, \eta_n$ are iid r.v.'s with law  ${\mathbb P} (\eta_1 =1)= {\mathbb P} (\eta_1 =-1)=1/2$.  Therefrom
\[
W_2^2 \big ( P_{\varepsilon_1+ \cdots+\varepsilon_n} , \nu_n \big )  \leq \BBE  \Big ( \Big ( \sum_{k=1}^n ( \eta_k -  Y_k)  \Big )^2  \Big ) .
\]
Since the r.v.'s $(  ( \eta_k -  Y_k)_{1 \leq k \leq n}$ are iid, 
\[
 \BBE  \Big ( \Big ( \sum_{k=1}^n ( \eta_k -  Y_k)  \Big )^2  \Big )  = n  \BBE (( \eta_1 -  Y_1)^2) .
\]
Next
\beq \label{ERad}
 \BBE (( \eta_1 -  Y_1)^2) =  \BBE (  ( |Y_1| -1)^2  ) = 2 - 2  \BBE ( |Y_1|) = 2 \bigl (  1 - \sqrt{ 2 /\pi }\bigr)  .
\eeq
This ends the proof of \eqref{BoundW2-2} and completes the proof of Lemma \ref{TMG}.  \qed

\par\medskip\noindent

\subsection{Proof of Proposition \ref{linkkappawasserstein}}

The proof  follows immediately from the  general proposition below applied to $\varphi = \psi_x$ where $\psi_x$ is defined in \eqref{defpsi}.

\begin{Proposition} \label{Propdualityconvex}
Let $\varphi $ be  a ${\mathcal C}^1$, non-negative, even and convex function defined on ${\mathbb R}$ such that $\varphi(0)=0$. Assume in addition that  $\varphi'$ is a concave function. Let $\mu$ and $\nu$ two probability laws on the real line with mean zero and respective cumulative distribution functions $F$ and $G$.  Let ${\mathcal F}_\varphi$ be the class of continuously differentiable functions $f : {\mathbb R} \mapsto {\mathbb R}$ such that 
$f(0)=0$, $\Vert f' \Vert_\infty \leq \Vert \varphi' \Vert_\infty$ and 
\[
|f'(x) - f'(y) | \leq 2  \varphi'( |x-y| ) \text{ for any } (x,y) \in  {\mathbb R}^2 .
\] 
Then
\[
\kappa_{ \varphi} ( \mu , \nu)  \leq \zeta_{\varphi} ( \mu, \nu ) := \sup \bigl \{  \mu(f)  -   \nu (f)  \, : \, f \in  {\mathcal F}_\varphi \bigr \} .
\]
\end{Proposition}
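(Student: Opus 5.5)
The plan is to use the quantile representation \eqref{Major78} together with an explicit test function built from the monotone coupling. Integrating by parts, for any $\mathcal C^1$ function $f$ with $f'$ bounded and $\mu,\nu$ having finite first moments (they are centered),
\[
\mu(f)-\nu(f)=\int_{\mathbb R} f'(x)\bigl(G(x)-F(x)\bigr)\,dx .
\]
Since adding a constant to $f$ changes nothing, the normalization $f(0)=0$ is free. So it suffices to produce $f'$ such that $\|f'\|_\infty\le\|\varphi'\|_\infty$, $|f'(x)-f'(y)|\le 2\varphi'(|x-y|)$, and
\[
\int f'(x)(G(x)-F(x))\,dx \ \ge\ \int_0^1\varphi\bigl(F^{-1}(u)-G^{-1}(u)\bigr)\,du .
\]

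\textbf{Step 1: a Fubini identity for the cost.} Set $h(u)=F^{-1}(u)-G^{-1}(u)$. Since $\varphi(0)=0$ and $\varphi$ is even, I write
\[
\varphi(h(u))=\int_{G^{-1}(u)}^{F^{-1}(u)}\varphi'\bigl(x-G^{-1}(u)\bigr)\,dx ,
\]
with oriented integration, so the integrand times orientation is nonnegative. The set of pairs $(x,u)$ with $x$ between $G^{-1}(u)$ and $F^{-1}(u)$ coincides, up to null sets, with the set of pairs where $u$ lies between $F(x)$ and $G(x)$. The orientation there is the sign of $G(x)-F(x)$. Exchanging integrals then gives
\[
\kappa_\varphi(\mu,\nu)=\int_{\mathbb R}\operatorname{sign}\bigl(G(x)-F(x)\bigr)\int_{I(x)}\varphi'\bigl(x-G^{-1}(u)\bigr)\,du\,dx ,
\]
where $I(x)$ is the interval between $F(x)$ and $G(x)$. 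This suggests defining $f'(x)$ as the average of $\varphi'(x-G^{-1}(u))$ over $u\in I(x)$. To make the construction symmetric in $\mu,\nu$, I would instead use the average of $\tfrac12\bigl(\varphi'(x-G^{-1}(u))+\varphi'(F^{-1}(u)-x)\bigr)$. This relies on the analogous representation of $\varphi(h(u))$ through $F^{-1}(u)$, averaged with the first one. With this choice,
\[
f'(x)\bigl(G(x)-F(x)\bigr)
\]
equals exactly the inner integral, up to the sign bookkeeping, so equality holds in the display above. When $F(x)=G(x)$, the function $f'$ is defined by continuity or set to $0$.

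\textbf{Step 2: the bound $\|f'\|_\infty\le\|\varphi'\|_\infty$.} This is immediate, since $f'$ is an average of values of $\varphi'$. One also needs $f'$ to be continuous, hence $f\in\mathcal C^1$, which follows from Step 3.

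\textbf{Step 3: the Lipschitz-type condition (main obstacle).} I must check that $|f'(x)-f'(y)|\le 2\varphi'(|x-y|)$. Since $\varphi'$ is concave with $\varphi'(0)=0$, it is subadditive and nondecreasing on $\mathbb R_+$. Hence for each fixed $u$, $|\varphi'(x-a)-\varphi'(y-a)|\le\varphi'(|x-y|)$.

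The difficulty is that the averaging window $I(x)$ moves with $x$, and may even change orientation when $F-G$ changes sign. I would try to control this with the monotonicity of quantiles. For $u\in I(x)\setminus I(y)$, both $G^{-1}(u)$ and $F^{-1}(u)$ are trapped between $x$ and $y$, so the corresponding values of $\varphi'$ are at most $\varphi'(|x-y|)$ in absolute value. That should account for the second copy of $\varphi'(|x-y|)$, and hence for the factor $2$.

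If the averaging formulation is too awkward at sign changes of $G-F$, my fallback is to split $\mathbb R$ into the intervals where $G-F$ keeps a constant sign and verify the condition on each interval. At each crossing point $f'$ vanishes, because $I(x)$ shrinks to a point where $G^{-1}$ and $F^{-1}$ meet $x$; the condition across intervals then follows by the triangle inequality and subadditivity of $\varphi'$.
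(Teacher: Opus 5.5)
\paragraph{There is a genuine gap in Step 3.} Your Steps 1--2 are correct: the integration by parts $\mu(f)-\nu(f)=\int f'(G-F)$ and the Fubini identity both hold. The plan fails at Step 3, and not merely because it is unproven. The function you build is tuned so that $\mu(f)-\nu(f)$ equals $\kappa_\varphi(\mu,\nu)$ exactly, and it does \emph{not} satisfy $|f'(x)-f'(y)|\le 2\varphi'(|x-y|)$ in general.

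Here is a counterexample.
\begin{itemize}
\item Take $\varphi(t)=t^2/2$. It meets all the hypotheses, and $\mathcal F_\varphi$ is then the set of $f$ with $f(0)=0$ and $f'$ $2$-Lipschitz.
\item Take $\mu$ uniform on $[-L,L]$ and $\nu$ uniform on $[-1,1]$. Then $F^{-1}(u)-G^{-1}(u)=(L-1)(2u-1)$, and for $0<x<1$ you have $I(x)=\,]1/2+x/(2L),1/2+x/2[$.
\item Your symmetric average is therefore
\[
f'(x)=\tfrac12\,\mathrm{avg}_{u\in I(x)}\bigl(F^{-1}(u)-G^{-1}(u)\bigr)=\frac{(L^2-1)\,x}{4L}.
\]
This is odd and linear on $]-1,1[$ with slope $(L^2-1)/(4L)>2$ as soon as $L\ge 9$.
\item The non-symmetric average fails too. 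Exchange $\mu$ and $\nu$: it gives $f'(x)=-(L-1)x/2$ on $]0,1[$ with $f'(0)=0$, which violates the bound once $L>5$.
\end{itemize}

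Several parts of your argument do not survive this.
\begin{itemize}
\item The violation occurs inside a single sign interval of $G-F$, so your fallback of splitting at sign changes does not help.
\item The heuristic behind Step 3 is inaccurate. For $x<y$ with $G>F$ on $[x,y]$, a point $u\in I(x)\setminus I(y)$ satisfies $F^{-1}(u)\in\,]x,y]$, but $G^{-1}(u)\le x$ is not trapped between $x$ and $y$.
\item More fundamentally, averages over windows of very different lengths are not controlled by the pointwise modulus of $\varphi'$.
\item Without a reduction to full-support laws, $f'$ need not vanish where $I(x)$ collapses. In the example, at $x=L$ the window collapses to $u=1$, where $F^{-1}(1)=L\neq 1=G^{-1}(1)$.
\end{itemize}

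\paragraph{The missing idea.} Give up exact equality and build $f'$ from the sign pattern of $H=G-F$ alone. After reducing to laws with smooth positive densities, the paper proceeds as follows.
\begin{itemize}
\item It takes $h'(t)=\operatorname{sign}H(t)\,\varphi'(2\,d(t,A))$ with $A=H^{-1}(\{0\})$. This set is nonempty because both laws are centered.
\item The modulus condition then follows from the $1$-Lipschitz property of $d(\cdot,A)$ and the concavity of $\varphi'$. When a zero of $H$ lies between $s$ and $t$, one uses $\varphi'(x)+\varphi'(y)\le 2\varphi'((x+y)/2)$ with $x+y\le 2|t-s|$.
\item The cost enters only through a pointwise inequality along the quantile coupling,
\[
h(F^{-1}(u))-h(G^{-1}(u))\ge\varphi\bigl(F^{-1}(u)-G^{-1}(u)\bigr).
\]
\item This is proved for each $u$. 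The points $F^{-1}(u)$ and $G^{-1}(u)$ lie in the same component $]a,b[$ of $A^c$, or both lie beyond $\sup A$, or both below $\inf A$. Hence $\delta(u)=|F^{-1}(u)-G^{-1}(u)|\le b-a$.
\item A minimization over the position of the segment then shows that the factor $2$ in $\varphi'(2\,d(\cdot,A))$ compensates for the distance to the nearest zero. This gives the lower bound $2\int_0^{\delta(u)/2}\varphi'(2s)\,ds=\varphi(\delta(u))$.
\end{itemize}
Integrating in $u$ yields $\mu(h)-\nu(h)\ge\kappa_\varphi(\mu,\nu)$ with $h\in\mathcal F_\varphi$. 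In your example this $h$ is given by $h'(t)=2\,\mathrm{sign}(t)\min(|t|,L-|t|)$ on $]-L,L[$ and $h'=0$ elsewhere, which is $2$-Lipschitz for every $L$.
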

\begin{Remark} If $\varphi (x) = |x|^r$ with $r \in ]1, 2] $, we derive that 
$\kappa_{ r} ( \mu , \nu)  \leq 2 r \zeta_r ( \mu , \nu)$,
where $\zeta_r$ is the ideal distance of order $r$ (see for instance \cite[eq. (9.1.2)]{RR98} for its definition).  In this particular case we recover \cite[Theorem 3.1]{Rio09}. 
\end{Remark}

\noindent
{\bf Proof of Proposition \ref{Propdualityconvex}.} 
Using the arguments developed in the beginning of the proof of \cite[Theorem 3.1]{Rio09}, we start by noting that it is enough to prove the proposition for probability laws with strictly positive and smooth densities. Indeed, 
\[
\kappa_{\varphi} ( \mu, \nu )  =   \lim_{\sigma \rightarrow 0}   \kappa_{\varphi} ( \mu_{\sigma}, \nu_{\sigma} ) \text{ and } \zeta_{\varphi} ( \mu, \nu )  =   \lim_{\sigma \rightarrow 0}   \zeta_{\varphi} ( \mu_{\sigma}, \nu_{\sigma} ) \, .
\]
where $\mu_{\sigma} = \mu \ast \phi_{\sigma}. \lambda$ with $\phi_{\sigma}$  the density of the normal law ${\mathcal N} (0, \sigma^2)$. 

So, from now, $\mu$ and $ \nu$  denote two probability laws with distributions function $F$ and $G$, respectively, and we assume that $F$ and $G$ are continuous and strictly increasing from ${\mathbb R}$ to $]0,1[$. For $U$ a r.v. with the uniform distribution over $[0,1]$, the random vector $(F^{-1} (U), G^{-1} (U))$ has respective marginal distributions $\mu$ and $\nu$. Hence, 
to prove the proposition, it is enough to  prove that there exists some function $h$ in ${\mathcal F}_\varphi$ such that, for any $u$ in $]0,1[$,
\begin{equation} \label{p1link}
h( F^{-1}(u)) - h( G^{-1}(u)) \geq  \varphi \big (  F^{-1}(u) -G^{-1}(u) \big )  \, .
\end{equation}
With this aim, let 
\[
H = G-F \text{ and } A = H^{-1} (\{0\}) \, .
\]
Since $H$ is continuous, $A$ is a closed set.  Let $h$ be a function from ${\mathbb R}$ to ${\mathbb R}$  that is derivable and such that 
\[
h'(t) = {\rm sign} H(t)  \varphi' (2 d(t,A)) \, .
\] 
Notice first that $A$ cannot be equal to $\emptyset$.  Indeed, if $A=\emptyset$, then either $F >G$ or $G >F$, which entails that $\mu$ and $\nu$ cannot have the same mean. 

Next, if $[s,t] \cap A =  \emptyset$, then 
\[
|h'(s) - h'(t) | = |  \varphi' (2 d(s,A)) -  \varphi' (2 d(t,A)) | \leq \varphi' ( 2 |t-s| ) \leq 2  \varphi' (  |t-s| ) \, , 
\]
where the first and the second inequalities come from the fact that $\varphi'$ is a concave function, that $\varphi'$ is nondecreasing and $\varphi'(0) =0$. 

Now assume that  $[s,t] \cap A  \neq   \emptyset$. Let $\alpha$ and $\beta$ such that $d(s,A) = |\alpha - s |$ and $d(t,A) = |t - \beta |$. Clearly 
\[
|h'(s) - h'(t) | =   \varphi' (2  |t - \beta |) +  \varphi' (2  |\alpha - s |) |  \, .
\]
Since $\varphi'$ is a concave and nondecreasing function, for any $x$ and $y$ such that $x+y \leq  t-s$, the following inequalities hold: 
\[
 \varphi' ( x) +   \varphi' ( y)  \leq  2 \varphi' \bigl ( (x+y)/2 \bigr) \leq  2 \varphi' \bigl( (t-s)/2\bigr) \, .
\]
This implies that 
\[
|h'(s) - h'(t) | \leq 2  \varphi' (  |t-s| ) \, .
\]
All these considerations show that   $h$ belongs to ${\mathcal F}_\varphi$. The rest of the proof  consists of showing that $h$ satisfies the inequality \eqref{p1link}. 

With this aim, we start by noticing that  $F^{-1}(u) \in A$ is equivalent to $G^{-1} (u) = F^{-1} (u)$.  Indeed,  $F^{-1}(u) \in A$ is equivalent to $G(F^{-1}(u)) = F(F^{-1}(u)) = u$ which in turn is equivalent to   $G^{-1} (u) = F^{-1} (u)$. Consequently, if $F^{-1}(u) \in A$, since $\varphi(0) =0$,
\[
h ( F^{-1}(u)) - h( G^{-1}(u))  =  \varphi \big (  F^{-1}(u) -G^{-1}(u) \big ) = 0.
\]  

Assume from now that $F^{-1}(u) \notin A$. There are different cases.

\noindent  \underline{\textit{Case 1.}}  $F^{-1}(u)  \notin ] \inf A , \sup A[$.  For instance assume that $F^{-1} (u) > \sup A$ and that $ \sup A < + \infty$ (the case where $F^{-1} (u) < \inf A$ can be handled similarly). Let $z =  \sup A$. Since $A$ is closed, $z \in A$. Now $F^{-1}(u) > z$
if and only if  $u > F(z)$. Hence, since $z \in A$, 
\[
G^{-1}(u) > G^{-1}(F(z)) = G^{-1}(G(z)) = z  \, .
\]
Note that for any  $z \in A$,
\[
h ( F^{-1}(u)) - h( G^{-1}(u)) = \int_z^{F^{-1} (u)} h'(t) dt -   \int_z^{G^{-1} (u)} h'(t) dt \, .
\]
Therefore, if $F^{-1}(u)  > G^{-1}(u) $,
\[
h ( F^{-1}(u)) - h( G^{-1}(u)) = \int_{G^{-1}(u)}^{F^{-1} (u)} h'(t) dt  \, ,
\]
and we note that  $G^{-1}(u)  < t <  F^{-1}(u)$ if and only if $F(t) < u < G(t)$ implying that ${\rm sign} (H(t) ) =1$.   Next, if $F^{-1}(u)  < G^{-1}(u) $,
\[
h ( F^{-1}(u)) - h( G^{-1}(u)) = - \int_{F^{-1}(u)}^{G^{-1} (u)} h'(t) dt  \, ,
\]
and  ${\rm sign} (H(t) ) =-1$.  By definition of $h$, if $t > z $, $ d(t,A) =  t-z$ implying that $h'(t) = {\rm sign} H(t)  \varphi' (2 (t-z))$. So, overall,  
\[
D (u):= h( F^{-1}(u)) - h( G^{-1}(u))  =   \int_{\min ( F^{-1}(u), G^{-1} (u))}^{ \max( F^{-1}(u), G^{-1} (u) )}\varphi' ( 2( t-z ))dt \, .
\]

Assume now that $F^{-1} (u) > G^{-1} (u)$ (the case $F^{-1} (u) < G^{-1} (u)$ can be handled similarly).  Then, since $\varphi'$ is nondecreasing and $\varphi'(0) = 0$, 
\[
D (u) =  \int_{G^{-1} (u)-z}^{  F^{-1}(u)-z}\varphi' ( 2 s )ds    \geq  \int_{0}^{  F^{-1}(u)-G^{-1} (u)}\varphi' ( 2 s )ds \geq  
\varphi   ( F^{-1}(u)-G^{-1} (u))   \, .
\]

\medskip

We consider now  the complementary case of  case 1. 

\noindent  \underline{\textit{Case 2.}}  $F^{-1}(u)  \in ] \inf A ,  \sup A[$. Let 
\[
a = \sup( A \cap ]- \infty , F^{-1}(u) ])\  \text{ and  } \  b = \inf( A \cap [F^{-1}(u) , + \infty [)  \, .
\]
Therefore $a < F^{-1}(u) < b $ which implies that $a < G^{-1}(u) < b $. Indeed $a < F^{-1}(u) $ entails that $ u > F(a) $. Hence, since $a \in A$,
\[
G^{-1} (u) >  G^{-1} ( F(a)  ) =   G^{-1} ( G(a)  ) = a   \, .
\]
We proceed similarly to prove that $G^{-1}(u) < b $.   Let then 
\[
I = ]F(a) , F(b) [  =   ]G(a) , G(b) [  \, .
\]
For any  $v \in I$,  $F^{-1} (v) \notin A$, from which $F^{-1} (v)  \neq G^{-1} (v) $. 
Hence the sign of  $F^{-1} (v)  - G^{-1} (v) $ is constant on $I$.

Without loss of generality, we assume in what follows that $F^{-1} (u)  >G^{-1} (u) $ and we set $\delta(u) =  F^{-1} (u)  - G^{-1} (u)$. We have 
\[
D(u) = h( F^{-1}(u)) - h( G^{-1}(u))   = \int_{ G^{-1} (u)}^{ F^{-1} (u)} \varphi' \big ( 2  \big ( ( t-a) \wedge (b-t) \big )  \big )  dt  : = f(G^{-1} (u)) \, , 
\]
where 
\[
f(y) = \int_y^{y+ \delta(u) } \varphi' \big ( 2  \big ( ( t-a) \wedge (b-t) \big )  \big )  dt  \, .
\]
By analyzing the derivative of $f$, one can see that the minimum of $f$ on $[a,b]$ is achieved in $y= a$ or in $y =b- \delta(u)$. Therefore 
\beq \label{UBDu}
D(u)  \geq  \int_{ 0}^{ \delta (u)} \varphi' \big ( 2  \big ( s \wedge (b-a-s) \big )  \big )  ds   \, .
\eeq
Since $\delta (u)  \leq b-a$,
\[
D(u)  \geq \int_{ 0}^{ \delta(u)/2} \varphi'  ( 2  s )  ds +  \int_{ \delta(u)/2}^{ \delta (u)}  \varphi' \big ( 2  \big ( s \wedge (b-a-s) \big )  \big )  ds \, .
\]
We argue now as to get the inequality \eqref{UBDu}. Hence we analyze the derivative of the function $g$ defined by  $g(y) = \int_y^{y+ \delta(u) /2 }   \varphi' \big ( 2  \big ( s \wedge (b-a-s) \big )  \big )  ds$. One can see that the minimum of $g$  is achieved in $y= 0$ or in $y =b- a - \delta(u)/2$. Therefore 
\[
\int_{ \delta(u)/2}^{ \delta (u)}  \varphi' \big ( 2  \big ( s \wedge (b-a-s) \big )  \big )  ds \geq  \int_{ 0}^{ \delta(u)/2} \varphi'  ( 2  s )  ds \, .
\]
Hence
\[
D(u)  \geq   2 \int_{ 0}^{ \delta(u)/2} \varphi'  ( 2  s )  ds    = \varphi ( \delta (u))   \, .
\]
This ends the proof of the proposition. \qed

\subsection{Proof of Lemma \ref{lemmaHtilde}}

Let $C = B-x$. Clearly $\tilde H_{A+B} (x) = \tilde H_{A+C} (0)$. Hence it is enough to prove the lemma in case $x=0$. Note that 
\[
 \tilde H_{A+C} (0) = \inf_{s >0} s^{-1}  \BBE ( (A+C+s)_+ )=  \inf_{v >0 \atop{ w>0}}  (v+w)^{-1} \BBE ( (A+C+v+w)_+ )  .
\]
Similarly
\[
 \tilde H_{A} (t) =   \inf_{v>0}  v^{-1} \BBE ( (A - t +v)_+ ) \ \text{ and  } \  \tilde H_{C} (-t) =   \inf_{w>0}  w^{-1} \BBE ( (C+ t +w)_+ ) .
 \]
Now $  (A+C+v+w)_+ \leq (A - t +v)_+ +  (C+ t +w)_+$. Hence, for any $v,w >0$,
\[
 \tilde H_{A+C} (0)  \leq  \frac{ \BBE ( (A - t +v)_+ )}{v} \frac{v}{v+w}  +   \frac{ \BBE ( (C + t +w)_+ )}{w} \frac{w}{v+w} .
\]
It follows that 
\[
 \tilde H_{A+C} (0)  \leq  \max \bigl ( v^{-1}  \BBE ( (A - t +v)_+ )  ,   w^{-1}  \BBE ( (C + t +w)_+ ) \bigr )  .
\]
Let $\varepsilon >0$ and $v$, $w$ be chosen in such a way that $v^{-1}  \BBE ( (A - t +v)_+ ) \leq \tilde H_{A} (t) + \varepsilon$  and 
$ w^{-1}  \BBE ( (C + t +w)_+ )   \leq \tilde H_{C} (-t) + \varepsilon$. Then 
$\tilde H_{A+C} (0)  \leq \max (  \tilde H_{A} (t)  ,  \tilde H_{C} (-t) ) + \varepsilon$. 
The result follows by letting $\varepsilon $ tend to $0$. \qed

\end{document}